\documentclass[12pt]{article}
\usepackage{amsmath}
\usepackage{amssymb}
\usepackage{tikz}

\usetikzlibrary{calc,fit,shapes.geometric}
\usepackage{subfigure}
\makeatletter
\def\fnum@figure#1{\figurename\nobreakspace\thefigure
       \hspace{0.6em}}                                     
\makeatother
\textwidth 6truein
\textheight 9truein
\oddsidemargin 0.25truein
\topmargin -0.5truein
\usepackage{marvosym}
\makeatletter

\newcommand{\Rmnum}[1]{\expandafter\@slowromancap\romannumeral #1@}
\makeatother

\begin{document}
\newtheorem{theorem}{Theorem}[section]
\newtheorem{corollary}{Corollary}
\newtheorem{definition}[theorem]{Definition}
\newtheorem{conjecture}[theorem]{Conjecture}
\newtheorem{problem}[theorem]{Problem}
\newtheorem{lemma}{Lemma}[section]
\newtheorem{proposition}[theorem]{Proposition}
\newtheorem{question}[theorem]{Question}
\newtheorem{claim}[theorem]{Claim}
\newtheorem{algorithm}[theorem]{Algorithm}
\newenvironment{proof}{\noindent {\bf Proof.}}
 {\hfill\rule{2mm}{2mm}\par}

\newcommand{\remark}{\medskip\par\noindent {\bf Remark.~~}}
\newcommand{\JCTB}{{\it J. Combin. Theory Ser. B.}, }
\newcommand{\JCT}{{\it J. Combin. Theory}, }
\newcommand{\JGT}{{\it J. Graph Theory}, }
\newcommand{\ComHung}{{\it Combinatorica}, }
\newcommand{\DAM}{{\it Discrete Applied Math.},}
\newcommand{\DM}{{\it Discrete Math.}, }
\newcommand{\ARS}{{\it Ars Combin.}, }
\newcommand{\SIAMDM}{{\it SIAM J. Discrete Math.}, }
\newcommand{\SIAMADM}{{\it SIAM J. Algebraic Discrete Methods}, }
\newcommand{\SIAMC}{{\it SIAM J. Comput.}, }
\newcommand{\ConAMS}{{\it Contemp. Math. AMS}, }
\newcommand{\TransAMS}{{\it Trans. Amer. Math. Soc.}, }
\newcommand{\AnDM}{{\it Ann. Discrete Math.}, }
\newcommand{\ConNum}{{\it Congr. Numer.}, }
\newcommand{\CJM}{{\it Canad. J. Math.}, }
\newcommand{\JLMS}{{\it J. London Math. Soc.}, }
\newcommand{\PLMS}{{\it Proc. London Math. Soc.}, }
\newcommand{\PAMS}{{\it Proc. Amer. Math. Soc.}, }
\newcommand{\JCMCC}{{\it J. Combin. Math. Combin. Comput.}, }

\long\def\longdelete#1{} \baselineskip 21 pt  

\title{\bf  Power domination in regular claw-free graphs
\thanks{Supported in part by National Natural Science Foundation of China (Nos.~11371008 and 91230201).}}
\author{Changhong Lu \ \ Rui Mao \ \  Bing Wang\\
 \normalsize Department of Mathematics,\\
\normalsize Shanghai Key Laboratory of PMMP,\\
\normalsize East China Normal University,\\
\normalsize Shanghai 200241, P. R. China\\
\normalsize C. Lu email: chlu@math.ecnu.edu.cn\\
\normalsize R. Mao(\Letter) email: maorui1111@163.com\\
\normalsize B. Wang email: wuyuwuyou@126.com\\}

\maketitle

\begin{abstract}

In this paper, we first
show that the power domination number of a connected $4$-regular claw-free graph on $n$ vertices is at most $\frac{n+1}{5}$, and the bound is sharp. The statement partly disprove the conjecture presented by Dorbec et al. in SIAM J. Discrete Math., 27:1559-1574, 2013. Then we present a dynamic programming style linear-time algorithm for weighted power domination problem in trees.
\bigskip

\noindent {\bf Keywords.} Power domination, claw-free graph, regular, weighted tree
\end{abstract}
\section{Introduction}                        
Electric power systems need to be continually monitored. One way to fulfill this task is to place phase measurement units
at selected locations in the system. The power system monitoring problem, as introduced in \cite{Baldwin}, asks for as few as possible measurement
devices to be put in an electric power system. The power system monitoring problem was then described as a graph theoretical problem in
\cite{Haynes1}. The problem is similar to a problem of domination, in which, additionally, the possibility of some propagation according to
Kirschoff laws is considered.

Let $G=(V, E)$ be a connected, simple graph with vertex set $V=V(G)$ and edge set $E=E(G)$. The \emph{open neighborhood} of a vertex $v\in V(G)$ is the set $N_G(v)=\{u\in V(G)|uv\in E(G)\}$, and the \emph{degree} of $v$ is $d_G(v)=|N_G(v)|$. The
\emph{closed neighborhood} of $v$ is the set $N_G[v]=N_G(v)\cup\{v\}$. A graph $G$ is $k$-\emph{regular} if $d_G(v) = k$ for every vertex $v \in V(G)$. The \emph{open neighborhood of a
subset} $S\subseteq V$ of vertices is the set $N_G(S) =\cup_{v\in S}N(v)$, while the \emph{closed neighborhood}
of $S$ is the set $N_G[S]=N_G(S)\cup S$. We denote $K_{i,j}$ the complete bipartite graph with two partite sets of cardinality $i$ and $j$, respectively. A \emph{claw-free} graph is a graph that does not contain a claw, i.e., $K_{1,3}$, as an induced subgraph.
We say a subset of $V(G)$ an \emph{independent set} if no two vertices of the set are adjacent in $G$. Let $x$ and $y$ are two vertices of $G$. Denote by $d(x,y)$ the \emph{distance} of $x$ and $y$ in $G$. We say a subset of $V(G)$ a \emph{packing} if no two vertices in the set of distance less than three in $G$.
For two graphs $G=(V,E)$ and $G'=(V',E')$, let $G\cup G'=(V\cup V',E\cup E')$ and $G\cap G'=(V\cap V',E\cap E')$. If $G\cup G'=\emptyset$, then $G$ and $G'$ are disjoint.
 For any vertex subset $X$ of $G$, let $G-X=G[V\setminus X]$ and for $X=\{x\}$ let $G-x=G-\{x\}$ for short. For notation and graph theory terminology not defined herein, we in general follow \cite {Diestel}.

The original definition of power domination was simplified to the following definition
independently in \cite{Dorbec1, Dorfling, Guo, Liao} and elsewhere.

\begin{definition}\label{def1}
Let $G$ be a graph. A set $S\subseteq V(G)$ is a power dominating set (abbreviated
as PDS) of $G$ if and only if all vertices of $V(G)$ have messages either by Observation Rule 1 (abbreviated as OR 1) initially or by Observation Rule 2 (abbreviated as OR 2) recursively.

{\bf OR 1.} A vertex $v\in S$ sends a message to itself
and all its neighbors. We say that $v$ observes itself and all its neighbors.

{\bf OR 2.} If an observed vertex $v$ has only one
unobserved neighbor $u$, then $v$ will send a message to $u$. We say that $v$ observes $u$.
\end{definition}

 Let $G=(V, E)$ be a graph and $S$ be a subset of $V$. For $i\geq0$, we define the set $P_G^{i}(S)$ of vertices observed by $S$ at step $i$ by the following rules:

(1) $P_G^{0}(S)= N[S]$;

(2) $P_G^{i+1}(S)=\cup \{N_G[v]: v\in P_G^{i}(S)$ such that $|N_G[v] \setminus P_G^{i}(S)|\leq 1\}$.

Note that if $S$ is a power domination set of $G$, then  there is a minimal integer $i_0$ such that $P_G^{i_0}(S) =V(G)$. Hence $P_G^{j} (S)= P_G^{i_0} (S)$ for every $j \geq i_0$ and we accordingly define $P_G^{\infty} (S)= P_G^{i_0} (S)$. If the graph $G$ is clear from the context, we will remove the subscripts $G$ for short.

The \emph{power domination number} of a graph $G$, denoted by $\gamma_p(G)$, is the minimum cardinality of a PDS of $G$. A PDS of $G$ with minimum cardinality is called a \emph{$\gamma_p(G)$-set}. The power domination problem was known to be NP-complete even for bipartite graphs, planar graphs and split graphs, see \cite{Guo, Haynes1}.

Chang et al \cite{Chang} generalized the power domination to $k$-power domination by replacing the OR 2 with the following observation rule: If an observed vertex $v$ has at most $k$ unobserved neighbors, then $v$ will
send a message to all its unobserved neighbors. The definition also can be found in \cite{Wang}.

 The \emph{$k$-power domination number} of $G$, denoted by $\gamma_{p,k}(G)$, is the minimum cardinality
of a $k$-power dominating set of $G$. When $k=1$,
The $k$-power domination is usual power domination. Both power domination and $k$-power domination are now well-studied in the literature (see, for example, \cite{Aazami, Chang, Dorbec, Dorbec1, Dorfling, Guo, Wang, Xu, Zhao}).

The paper is organized as follows.
In section 2, we will give sharp upper bounds for the power domination number of a connected 4-regular claw-free graph on $n$ vertices. The result will partly disprove the conjecture introduced by Dorbec et al. in \cite{Dorbec}. In section 3, we give a dynamic programming style linear-time algorithm for weighted power domination in trees.

\section{Power domination in 4-regular claw-free graphs}
Zhao et al. \cite{Zhao} proved that if $G$ is a connected claw-free cubic graph of order $n$, then $\gamma_{p}(G)\le\frac{n}{4}$.
Chang et al. \cite{Chang} showed that if $G$ is a connected claw-free $(k+2)$-regular graph on $n$ vertices, then $\gamma_{p,k}(G)\le \frac {n}{k+3}$. Recently, Dorbec et al. \cite{Dorbec} gave an found that the claw-free condition can be removed. Then they presented the following conjecture.

\begin{conjecture}\label{conj1}\emph{(\cite{Dorbec})}
For $k\ge1$ and $r\ge 3$, if $G \ncong K_{r,r}$ is a connected $r$-regular graph
of order $n$, then $\gamma_{p,k}(G)\le \frac{n}{r + 1}$.
\end{conjecture}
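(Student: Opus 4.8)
The plan is to exhibit a single explicit $k$-power dominating set of the required size, namely a maximal packing of $G$. Recall a packing is a set of vertices pairwise at distance at least three; I take $S$ to be a maximal one. Two features make this candidate attractive. First, the closed neighborhoods $\{N_G[v] : v\in S\}$ are pairwise disjoint, since any two seeds sharing a common neighbor would lie at distance at most two. As $G$ is $r$-regular each such closed neighborhood has exactly $r+1$ vertices, so $(r+1)|S|=\sum_{v\in S}|N_G[v]|\le n$, giving $|S|\le \frac{n}{r+1}$ at once. Second, by maximality every vertex of $G$ lies within distance two of $S$, so after OR 1 fires on every seed only the vertices at distance exactly two from $S$ remain to be observed.

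It therefore remains to prove the real content: that this maximal packing is indeed a $k$-power dominating set, i.e. $P_G^{\infty}(S)=V(G)$. After the initial OR 1 step the observed set is $O=N_G[S]$, and I must show that OR 2 propagation reaches every vertex $w$ at distance two from $S$. Fix such a $w$ and a shortest path $w\,u\,v$ with $v\in S$ and $u\in N_G(v)$; then $u$ is already observed. The mechanism I would use is that an observed vertex propagates along OR 2 exactly when it has at most $k$ unobserved neighbors, equivalently at least $r-k$ observed neighbors. So the task reduces to showing that, running propagation to saturation, every frontier vertex eventually accumulates $r-k$ observed neighbors.

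The exclusion $G\ncong K_{r,r}$ is what drives this. In $K_{r,r}$ a seed's every distance-two vertex sees only that one seed among its $r$ neighbors, leaving $r-1$ unobserved neighbors, so for $r\ge k+2$ propagation stalls immediately and the packing (forced to be a single vertex, since $\operatorname{diam} K_{r,r}=2$) fails; this is precisely the excepted graph. For any other connected $r$-regular $G$ I would argue that the denser local adjacency — between a seed's neighborhood and the neighborhoods of nearby seeds, and among the distance-one vertices themselves — supplies each frontier vertex with the $r-k$ observed neighbors it needs, and then iterate this across the connected graph so that propagation never stalls before covering $V$. The main obstacle, and where I expect the real fight, is establishing this threshold uniformly: a maximal packing spreads its seeds out (which is exactly what keeps $|S|$ small), yet the same spreading can starve a frontier vertex of observed neighbors, so the proof must quantify, from regularity and the non-$K_{r,r}$ hypothesis alone, that every stalled configuration retains enough internal adjacency to advance. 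Pinning down which local structures around a stalled frontier vertex are compatible with $r$-regularity — and checking that the $r-k$ count survives in the tightest of them — is the crux of the whole argument.
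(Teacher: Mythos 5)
You are attempting to prove a statement that the paper exists to \emph{disprove}. The statement you were given is Conjecture~\ref{conj1}, quoted from Dorbec et al., and the whole point of Section~2 is that this conjecture is false for $k=1$ and every even $r\ge 4$: the graphs $E_0,E_1,\dots,E_k$ of Figure~1 are connected $r$-regular graphs with $|V(E_k)|=2r+1+k(r+1)$ and $\gamma_p(E_k)=k+2=\frac{n+1}{r+1}>\frac{n}{r+1}$, and none of them is $K_{r,r}$. So no proof of the statement can be correct, and the paper instead proves the weaker sharp bound $\gamma_p(G)\le\frac{n+1}{5}$ for connected claw-free $4$-regular graphs (Theorem~\ref{mainth}), with the $E_k$ serving as extremal graphs. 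Your proposal should have been hunting for counterexamples at the step you yourself flagged as ``the crux,'' because that step is not merely unproven in your write-up --- it is false.

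The failure is concrete. Take $r=4$, $k=1$ and the graph $E_0$: two copies of $K_4$ joined by independent edges, plus one vertex $u$ absorbing the remaining degrees, giving $n=9$. Every pair of vertices of $E_0$ is at distance at most two (each vertex of one $K_4$ reaches the other copy through its matched partner or through $u$), so any packing --- maximal or not --- is a single vertex. Your counting half then gives $|S|=1\le\frac{9}{5}$, which is exactly the problem: the paper notes $\gamma_p(E_0)=2$, so this single-vertex $S$ cannot satisfy $P^{\infty}(S)=V$. After OR~1 fires, every frontier vertex retains at least two unobserved neighbors and OR~2 never starts; the ``denser local adjacency'' you hoped regularity plus $G\ncong K_{r,r}$ would supply simply does not exist here, and the splitting construction producing $E_1,E_2,\dots$ manufactures such stalled configurations in arbitrarily large graphs. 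More generally, your claim that a maximal packing is a ($k$-)power dominating set is unsupported even where the conjectured bound is true: the paper's own proof of Theorem~\ref{mainth} starts from a carefully chosen packing $\mathcal{P}_0$ (built via Steps 1--5 around the structures $A_i$, $T_i$, $F_i$) and then must \emph{augment} it vertex by vertex through Lemma~\ref{51}, each added vertex buying at least five newly observed vertices --- precisely because packings alone do not propagate, which is also why the resulting bound is $\frac{n+1}{5}$ rather than $\frac{n}{5}$.
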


It is obvious that if the conjecture holds for $k=1$, then it also holds for all $k\ge 2$. Hence, we pay our attention to the case of $k=1$ in the context. Dorbec et al. \cite{Dorbec} showed that $\gamma_{p,k}(G)\le \frac {n}{k+3}$ for any connected $(k+2)$-regular graph $G$ on $n$ vertices. It means that Conjecture~\ref{conj1} holds for $k=1$ and $r=3$. However, for each even $r\ge 4$, we show that Conjecture \ref{conj1} does not always hold.

We first give a counterexample $E_0$ with $2r+1$ vertices. Then based on $E_0$, we obtain infinitely many counterexamples of Conjecture \ref{conj1} (in fact, the graphs are extremal graphs of Theorem~\ref{mainth} as well). Pick first two copies of $K_r$ and one singleton vertex $u$, then add $r$ independent edges between the two copies and $r$ edges linking the vertex $u$ to these vertices which are not incident to any independent edge before. Denote by the resulting $r$-regular graph $E_0$. Let $E_1$ be the graph obtained from $E_0$ by splitting the vertex $u$ into two vertices of degree $\frac{r}{2}$, say $u_1^1$ and $u_2^1$, then add an extra $K_r$ and link exactly $\frac{r}{2}$ edges from each $u_i^1$ to the new $K_r$ such that the resulting graph is $r$-regular as well. Similarly, let $E_j$ be the graph obtained from $E_{j-1}$ by splitting the vertex linked to two copies of $K_r$ into two vertices of degree $\frac{r}{2}$, say $u_1^j$ and $u_2^j$, then add an extra $K_r$ and link exactly $\frac{r}{2}$ edges from each $u_i^j$ to the new $K_r$ such that the resulting graph is $r$-regular as well, here $i=1,2$. The case for $r=4$ is shown in Figure 1.

It is obvious that $\gamma_p(E_0)=2$. Note that for any integer $k\ge1$, $\gamma_p(E_k)\ge k+2$ since any power domination set of $E_k$ must contain at least one vertex in each added $K_r$ and also contain at least two vertices in the rest part isomorphic to the graph $E_1$ minus the added $K_r$. On the other hand, two vertices $a, u_2^k$ together with $k$ vertices, picking exactly one vertex from each added $K_r$ of $E_k$, form a power domination set of $E_k$. Since $|V(E_k)|=2r+1+k(r+1)$, $\gamma_p(E_k)=k+2=\frac{2r+2+k(r+1)}{r+1}$.

The main result of the paper is as follows.
\begin{theorem}\label{mainth}
Let $G$ be a connected claw-free $4$-regular graph of order $n$. Then $\gamma_{P}(G)\le\frac{n+1}{5}$ and the bound is sharp.
\end{theorem}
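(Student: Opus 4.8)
The plan is to argue by induction on $n$ (equivalently, to analyze a minimal counterexample), guided by the structure of the extremal graphs $E_k$ constructed above. The single most useful fact is local: since $G$ is claw-free and $4$-regular, for every vertex $v$ the induced graph $G[N(v)]$ on its four neighbors has independence number at most $2$; its complement is then a triangle-free graph on four vertices, hence bipartite, so $N(v)$ is covered by at most two cliques. First I would record this lemma and its consequences: either some $N[v]$ spans a $K_5$ (forcing, by connectivity, $G=K_5$, a trivial base case), or every neighborhood splits into two cliques, which pins down how each vertex distributes its edges and, crucially, how observation spreads once a clique is observed. It is convenient to use the zero-forcing reformulation here, namely that $S$ is a PDS exactly when $N[S]$ is a zero forcing set, so that after the initial OR 1 step, OR 2 is precisely the forcing rule; this lets me treat propagation as a forcing cascade running along the cliques supplied by the structural lemma.

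For the inductive step I would isolate a reducible configuration modeled on the block added in passing from $E_{j-1}$ to $E_j$: a $4$-clique $B$ whose only edges to $V(G)\setminus B$ run to two vertices $u_1,u_2$, with exactly two edges from each $u_i$ into $B$. Deleting $B$ and merging $u_1,u_2$ into a single vertex $u$ (the reverse of the splitting operation used to build the $E_k$) yields a graph $G'$ on $n-5$ vertices, and I would check that $G'$ is again connected, $4$-regular, simple, and claw-free. The target reduction inequality is $\gamma_p(G)\le \gamma_p(G')+1$: take a $\gamma_p(G')$-set $S'$, lift it to $G$, and adjoin one vertex of $B$. One then verifies that $B$ becomes fully observed, that $u_1,u_2$ each acquire enough observed neighbors inside $B$ to be forced, and that every force available to $S'$ at $u$ in $G'$ is mirrored at $u_1,u_2$ in $G$, so the cascade in $G'$ lifts intact. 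With this in hand, induction gives $\gamma_p(G)\le \tfrac{(n-5)+1}{5}+1=\tfrac{n+1}{5}$, while the remaining small base cases are checked directly.

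The main obstacle is the existence step: proving that every connected claw-free $4$-regular graph which is not a base case contains such a reducible block. This is exactly where the two-clique description of neighborhoods must do the real work; I would run a minimal-counterexample analysis, using the clique partition around a suitable vertex to force the appearance of a $K_4$ attached to the rest of $G$ through a two-vertex interface, and then rule out the configurations that would create a claw or break $4$-regularity upon merging. The delicate bookkeeping is the propagation across the cut $\{u_1,u_2\}$, which I expect to require a short case analysis on how the two external edges of $B$ are distributed between $u_1$ and $u_2$. Sharpness needs no new argument: the graphs $E_k$ already satisfy $\gamma_p(E_k)=k+2=\tfrac{|V(E_k)|+1}{5}$, so the bound $\tfrac{n+1}{5}$ is best possible and, in particular, exceeds the value $\tfrac{n}{r+1}=\tfrac{n}{5}$ predicted by Conjecture~\ref{conj1}.
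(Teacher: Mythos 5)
There is a genuine gap, and it sits exactly where you flagged ``the main obstacle'': the \emph{existence} of your reducible block. A connected claw-free $4$-regular graph need not contain any $K_4$ at all, let alone a $K_4$ joined to the rest of the graph through a two-vertex interface. Concretely, the line graph $L(H)$ of any connected simple cubic graph $H$ is connected, $4$-regular and claw-free, and its maximum cliques have size $3$ (cliques in a line graph arise from stars or triangles of $H$, and a simple cubic graph has stars of at most $3$ edges); the octahedron $L(K_4)=K_{2,2,2}$ is the smallest example, and there are infinitely many. On this infinite family your induction can never take a single step, so all of these graphs would have to be ``small base cases checked directly,'' which is not a proof. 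The two-clique description of neighborhoods cannot rescue the existence claim, because in these examples every neighborhood splits into two triangles and no $K_4$ ever forms. (Your reduction itself --- delete $B$ and merge $u_1,u_2$ --- is sound when the block exists: it reverses the splitting used to build $E_k$, and one can check that $G'$ stays simple, connected, $4$-regular and claw-free, and that a PDS of $G'$ plus one vertex of $B$ lifts to a PDS of $G$. The failure is purely that the block need not exist.)

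This is also where your route diverges from the paper's. The paper does run a minimal-counterexample analysis and does use reductions in your spirit (replacing $L_3$, or the subgraphs $J_1,\dots,J_{14}$, by smaller gadgets), but those reductions only dispose of certain configurations; the heart of the proof is what happens when none of them is present. There the paper builds, by a priority rule over the special subgraphs $F_i$, $T_i\circ A_j$, $T_i$, $A_i\circ A_j$, $A_i$, a packing $\mathcal{P}_0$ (vertices pairwise at distance at least $3$), extends it to a maximal packing $S_0$, and then proves (Lemma \ref{51}) that one can repeatedly add single vertices, each of which causes at least $5$ new vertices to become observed, until the whole graph is observed. Counting then gives $n\ge 5(|S_0|+l)\ge 5\gamma_p(G)$, i.e.\ $\gamma_p(G)\le n/5<\frac{n+1}{5}$, contradicting that $G$ is a counterexample. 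In other words, for graphs without a reducible configuration the bound comes from a global packing-plus-propagation count, not from induction on a local surgery; your proposal has nothing in place for exactly those graphs (in particular all line graphs of cubic graphs). The sharpness part of your proposal is fine, since the graphs $E_k$ do achieve $\frac{n+1}{5}$, matching the paper.
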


\begin{center}
\setlength{\unitlength}{0.20mm}
\begin{picture}(647,214)
\put(112,97){\circle*{6}}
\put(60,193){\circle*{6}}
\put(166,193){\circle*{6}}
\put(60,133){\circle*{6}}
\qbezier(60,193)(60,163)(60,133)
\qbezier(60,193)(113,193)(166,193)
\put(166,133){\circle*{6}}
\qbezier(166,193)(166,163)(166,133)
\put(165,133){\circle*{6}}
\qbezier(60,133)(112,133)(165,133)
\put(102,132){\circle*{6}}
\qbezier(60,193)(81,163)(102,132)
\put(102,194){\circle*{6}}
\qbezier(60,133)(81,164)(102,194)
\qbezier(102,194)(102,163)(102,132)
\put(128,194){\circle*{6}}
\put(128,132){\circle*{6}}
\qbezier(128,194)(128,163)(128,132)
\qbezier(128,194)(146,164)(165,133)
\qbezier(166,193)(147,163)(128,132)
\put(248,194){\circle*{6}}
\put(354,194){\circle*{6}}
\put(248,134){\circle*{6}}
\put(354,134){\circle*{6}}
\put(353,134){\circle*{6}}
\put(290,133){\circle*{6}}
\put(290,195){\circle*{6}}
\put(316,195){\circle*{6}}
\put(316,134){\circle*{6}}
\qbezier(248,194)(248,164)(248,134)
\qbezier(248,194)(301,194)(354,194)
\qbezier(354,194)(354,164)(354,134)
\qbezier(248,134)(300,134)(353,134)
\qbezier(248,194)(269,164)(290,133)
\qbezier(248,134)(269,165)(290,195)
\qbezier(290,195)(290,164)(290,133)
\qbezier(316,195)(316,165)(316,134)
\qbezier(316,195)(334,165)(353,134)
\qbezier(354,194)(335,164)(316,134)
\put(258,92){\circle*{6}}
\put(343,96){\circle*{6}}
\qbezier(166,133)(139,115)(112,97)
\qbezier(60,133)(86,115)(112,97)
\qbezier(166,193)(213,90)(112,97)
\qbezier(60,193)(10,88)(112,97)
\qbezier(248,134)(253,113)(258,92)
\qbezier(353,134)(348,115)(343,96)
\qbezier(248,194)(209,126)(258,92)
\qbezier(354,194)(397,133)(343,96)
\put(280,114){\circle*{6}}
\put(280,74){\circle*{6}}
\qbezier(280,114)(280,94)(280,74)
\put(321,114){\circle*{6}}
\qbezier(280,114)(300,114)(321,114)
\put(321,74){\circle*{6}}
\qbezier(321,114)(321,94)(321,74)
\qbezier(280,74)(300,74)(321,74)
\qbezier(258,92)(269,103)(280,114)
\qbezier(258,92)(269,83)(280,74)
\qbezier(321,114)(332,105)(343,96)
\qbezier(343,96)(332,85)(321,74)
\qbezier(280,114)(300,94)(321,74)
\qbezier(321,114)(300,94)(280,74)
\put(109,81){$u$}
\put(99,203){$a$}
\put(220,91){$u_1^1$}
\put(350,94){$u_2^1$}
\put(286,204){$a$}
\put(96,31){$E_0$}
\put(293,31){$E_1$}
\put(438,196){\circle*{6}}
\put(563,196){\circle*{6}}
\qbezier(438,196)(500,196)(563,196)
\put(438,141){\circle*{6}}
\qbezier(438,196)(438,169)(438,141)
\put(563,141){\circle*{6}}
\qbezier(438,141)(500,141)(563,141)
\qbezier(563,196)(563,169)(563,141)
\put(474,196){\circle*{6}}
\put(474,140){\circle*{6}}
\qbezier(474,196)(474,168)(474,140)
\put(525,195){\circle*{6}}
\put(525,141){\circle*{6}}
\qbezier(525,195)(525,168)(525,141)
\qbezier(438,196)(456,168)(474,140)
\qbezier(474,196)(456,169)(438,141)
\qbezier(525,195)(544,168)(563,141)
\qbezier(563,196)(544,169)(525,141)
\put(418,94){\circle*{6}}
\qbezier(438,141)(428,118)(418,94)
\qbezier(438,196)(401,122)(418,94)
\put(591,94){\circle*{6}}
\qbezier(563,141)(577,118)(591,94)
\qbezier(563,196)(607,153)(591,94)
\put(437,113){\circle*{6}}
\qbezier(418,94)(427,104)(437,113)
\put(437,75){\circle*{6}}
\qbezier(418,94)(427,85)(437,75)
\qbezier(437,113)(437,94)(437,75)
\put(460,113){\circle*{6}}
\qbezier(437,113)(448,113)(460,113)
\put(460,75){\circle*{6}}
\qbezier(460,113)(460,94)(460,75)
\qbezier(437,75)(448,75)(460,75)
\qbezier(437,113)(448,94)(460,75)
\qbezier(460,113)(448,94)(437,75)
\put(473,95){\circle*{6}}
\qbezier(460,113)(466,104)(473,95)
\qbezier(473,95)(466,85)(460,75)
\put(485,113){\circle*{6}}
\qbezier(473,95)(479,104)(485,113)
\put(485,75){\circle*{6}}
\qbezier(473,95)(479,85)(485,75)
\qbezier(485,113)(485,94)(485,75)
\put(508,113){\circle*{6}}
\qbezier(485,113)(496,113)(508,113)
\put(508,75){\circle*{6}}
\qbezier(508,113)(508,94)(508,75)
\qbezier(485,75)(496,75)(508,75)
\qbezier(485,113)(496,94)(508,75)
\qbezier(508,113)(496,94)(485,75)
\put(521,95){\circle*{6}}
\qbezier(508,113)(514,104)(521,95)
\qbezier(521,95)(514,85)(508,75)
\put(542,95){\circle*{6}}
\put(553,112){\circle*{6}}
\qbezier(542,95)(547,104)(553,112)
\put(553,76){\circle*{6}}
\qbezier(542,95)(547,86)(553,76)
\qbezier(553,112)(553,94)(553,76)
\put(573,112){\circle*{6}}
\qbezier(553,112)(563,112)(573,112)
\put(573,76){\circle*{6}}
\qbezier(573,112)(573,94)(573,76)
\qbezier(553,76)(563,76)(573,76)
\qbezier(553,112)(563,94)(573,76)
\qbezier(573,112)(563,94)(553,76)
\qbezier(573,112)(582,103)(591,94)
\qbezier(591,94)(582,85)(573,76)
\put(523,94){$...$}
\put(392,94){$u_1^k$}
\put(599,93){$u_2^k$}
\put(471,205){$a$}
\put(494,32){$E_k$}
\put(201,-9){Figure 1. Counterexample graphs.}
\end{picture}
\end{center}

\subsection{Structure of the minimal counterexample $G$}
If the statement of Theorem \ref{mainth} fails, then we suppose that $G$ is a counterexample with minimal $|V(G)|$. We have the following statement.

\begin{lemma}\label{counterex}
$G$ is neither isomorphic to $K_5$ nor $I_i$ for $i\in\{1,2,\cdots,8\}$.
\end{lemma}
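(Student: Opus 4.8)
The plan is to unpack what it means for $G$ to be a minimal counterexample and then rule out each of the nine named graphs one at a time. Since $G$ violates Theorem~\ref{mainth}, we have $\gamma_p(G) > \frac{n+1}{5}$, equivalently $5\gamma_p(G) \ge n+2$ where $n=|V(G)|$ (both sides being integers). Consequently, to show that $G$ is not isomorphic to a particular connected claw-free $4$-regular graph $H$, it suffices to verify that $H$ \emph{does} satisfy the bound, i.e.\ $\gamma_p(H) \le \frac{|V(H)|+1}{5}$; then $H$ is not a counterexample and hence $G\not\cong H$. So the entire lemma reduces to checking the bound on the finite list $K_5, I_1,\ldots,I_8$.

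First I would dispose of $K_5$. Here $n=5$ and any single vertex $v$ already observes $N[v]=V(K_5)$ by OR~1, so $\gamma_p(K_5)=1\le \frac{6}{5}$, and $K_5$ satisfies the bound. For each $I_i$ the strategy is identical: exhibit an explicit power dominating set $S_i$ whose size meets the target $\lfloor (|V(I_i)|+1)/5\rfloor$, and then confirm that $P^{\infty}(S_i)=V(I_i)$. The verification proceeds by first applying OR~1 to obtain $P^0(S_i)=N[S_i]$, and then repeatedly applying OR~2: at each stage one locates an already-observed vertex with exactly one unobserved neighbor and observes that neighbor, iterating until no unobserved vertex remains. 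Here the hypotheses are helpful: $4$-regularity together with claw-freeness forces every neighborhood $N(v)$ (four vertices) to have independence number at most two, so the graphs are locally dense and triangle-rich, which makes it easy to find a seed vertex whose closed neighborhood leaves few unobserved vertices and from which OR~2 cascades quickly. In most of the small cases a single well-chosen vertex should suffice (recall that $1\le\frac{|V(H)|+1}{5}$ as soon as $|V(H)|\ge 4$, which always holds for a $4$-regular graph), so it is typically enough to pick a vertex lying in a dense cluster.

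The main obstacle is purely the bookkeeping of the OR~2 propagation for each specific $I_i$: one must track, step by step, which vertex currently has a unique unobserved neighbor so that the forced observations actually chain all the way to cover the graph. This is a finite, deterministic check, but it is delicate because OR~2 fires only when the ``exactly one unobserved neighbor'' condition holds, so the seed $S_i$ must be placed so that this condition is repeatedly met; a poorly chosen seed can stall before covering all of $V(I_i)$. I would therefore carry out the nine verifications by drawing each $I_i$, marking the chosen seed, and shading vertices as they become observed, recording at each step the vertex that triggers the next OR~2 application, until the whole vertex set is shaded, thereby certifying $\gamma_p(I_i)\le \frac{|V(I_i)|+1}{5}$ and completing the elimination.
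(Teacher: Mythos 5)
Your reduction is exactly the intended one: since $G$ is a minimal counterexample it satisfies $\gamma_p(G)>\frac{n+1}{5}$, so to rule out $K_5$ and each $I_i$ it suffices to verify that each of these nine connected claw-free $4$-regular graphs does satisfy the bound $\gamma_p(H)\le\frac{|V(H)|+1}{5}$, a finite check via OR~1 and OR~2 propagation. The paper states this lemma with no proof at all, treating precisely this routine verification as obvious, so your proposal is correct and follows the same (implicit) approach, indeed spelling it out in more detail than the paper does.
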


\begin{center}
\setlength{\unitlength}{0.20mm}
\begin{picture}(571,349)
\put(146,150){\circle*{6}}
\put(146,108){\circle*{6}}
\put(276,150){\circle*{6}}
\put(276,108){\circle*{6}}
\put(189,151){\circle*{6}}
\put(189,107){\circle*{6}}
\put(234,150){\circle*{6}}
\put(234,107){\circle*{6}}
\qbezier(146,150)(146,129)(146,108)
\qbezier(146,150)(211,150)(276,150)
\qbezier(146,108)(211,108)(276,108)
\qbezier(276,150)(276,129)(276,108)
\qbezier(189,151)(189,129)(189,107)
\qbezier(146,150)(167,129)(189,107)
\qbezier(146,108)(167,130)(189,151)
\qbezier(234,150)(234,129)(234,107)
\qbezier(234,150)(255,129)(276,108)
\qbezier(276,150)(255,129)(234,107)
\qbezier(146,150)(216,182)(276,150)
\qbezier(146,108)(214,72)(276,108)
\put(420,303){\circle*{6}}
\put(420,249){\circle*{6}}
\qbezier(420,303)(420,276)(420,249)
\put(393,303){\circle*{6}}
\put(364,274){\circle*{6}}
\qbezier(393,303)(378,289)(364,274)
\put(394,248){\circle*{6}}
\qbezier(364,274)(379,261)(394,248)
\put(421,275){\circle*{6}}
\qbezier(393,303)(407,289)(421,275)
\qbezier(421,275)(407,262)(394,248)
\put(140,205){\circle{0}}
\put(364,303){\circle*{6}}
\put(392,331){\circle*{6}}
\put(420,303){\circle*{6}}
\qbezier(392,331)(406,317)(420,303)
\put(364,249){\circle*{6}}
\qbezier(364,249)(392,249)(420,249)
\put(420,250){\circle*{6}}
\qbezier(364,249)(395,221)(420,250)
\put(472,311){\circle*{6}}
\put(472,255){\circle*{6}}
\qbezier(472,311)(472,283)(472,255)
\put(530,311){\circle*{6}}
\qbezier(472,311)(501,311)(530,311)
\put(530,255){\circle*{6}}
\qbezier(530,311)(530,283)(530,255)
\qbezier(472,255)(501,255)(530,255)
\qbezier(472,311)(443,281)(472,255)
\qbezier(472,311)(504,340)(530,311)
\qbezier(530,311)(569,282)(530,255)
\qbezier(472,255)(506,219)(530,255)
\put(501,312){\circle*{6}}
\put(472,283){\circle*{6}}
\qbezier(501,312)(486,298)(472,283)
\put(502,255){\circle*{6}}
\qbezier(472,283)(487,269)(502,255)
\put(530,283){\circle*{6}}
\qbezier(501,312)(515,298)(530,283)
\qbezier(530,283)(516,269)(502,255)
\put(285,320){\circle*{6}}
\put(247,257){\circle*{6}}
\qbezier(285,320)(266,289)(247,257)
\put(327,257){\circle*{6}}
\qbezier(285,320)(306,289)(327,257)
\qbezier(247,257)(287,257)(327,257)
\put(267,291){\circle*{6}}
\put(304,291){\circle*{6}}
\qbezier(267,291)(285,291)(304,291)
\put(289,257){\circle*{6}}
\qbezier(267,291)(278,274)(289,257)
\qbezier(304,291)(296,274)(289,257)
\qbezier(285,320)(335,308)(327,257)
\qbezier(285,320)(237,304)(247,257)
\qbezier(247,257)(292,221)(327,257)
\qbezier(364,303)(364,276)(364,249)
\qbezier(364,303)(392,303)(420,303)
\qbezier(364,303)(393,320)(420,303)
\qbezier(392,331)(324,307)(364,249)
\qbezier(392,331)(462,311)(420,250)
\put(20,307){\circle*{6}}
\put(20,257){\circle*{6}}
\qbezier(20,307)(20,282)(20,257)
\put(141,307){\circle*{6}}
\qbezier(20,307)(80,307)(141,307)
\put(141,257){\circle*{6}}
\qbezier(20,257)(80,257)(141,257)
\qbezier(141,307)(141,282)(141,257)
\put(61,306){\circle*{6}}
\put(61,257){\circle*{6}}
\qbezier(61,306)(61,282)(61,257)
\put(99,306){\circle*{6}}
\put(99,257){\circle*{6}}
\qbezier(99,306)(99,282)(99,257)
\put(225,307){\circle*{6}}
\qbezier(141,307)(183,307)(225,307)
\put(225,257){\circle*{6}}
\qbezier(141,257)(183,257)(225,257)
\qbezier(225,307)(225,282)(225,257)
\qbezier(20,307)(40,282)(61,257)
\qbezier(61,306)(40,282)(20,257)
\qbezier(99,306)(120,282)(141,257)
\qbezier(141,307)(120,282)(99,257)
\put(183,306){\circle*{6}}
\put(183,256){\circle*{6}}
\qbezier(183,306)(183,281)(183,256)
\qbezier(183,306)(204,282)(225,257)
\qbezier(225,307)(204,282)(183,256)
\qbezier(20,307)(128,338)(225,307)
\qbezier(20,257)(130,227)(225,257)
\qbezier(392,331)(378,317)(364,303)
\put(305,152){\circle*{6}}
\put(346,152){\circle*{6}}
\qbezier(305,152)(325,152)(346,152)
\put(305,109){\circle*{6}}
\qbezier(305,152)(305,131)(305,109)
\put(346,109){\circle*{6}}
\qbezier(305,109)(325,109)(346,109)
\qbezier(346,152)(346,131)(346,109)
\put(362,131){\circle*{6}}
\qbezier(346,152)(354,142)(362,131)
\qbezier(362,131)(354,120)(346,109)
\put(377,152){\circle*{6}}
\qbezier(362,131)(369,142)(377,152)
\put(377,109){\circle*{6}}
\qbezier(362,131)(369,120)(377,109)
\put(417,152){\circle*{6}}
\qbezier(377,152)(397,152)(417,152)
\qbezier(377,152)(377,131)(377,109)
\put(417,109){\circle*{6}}
\qbezier(417,152)(417,131)(417,109)
\qbezier(377,109)(397,109)(417,109)
\qbezier(377,152)(397,131)(417,109)
\qbezier(417,152)(397,131)(377,109)
\qbezier(305,152)(363,181)(417,152)
\qbezier(305,109)(367,76)(417,109)
\qbezier(305,152)(325,131)(346,109)
\qbezier(346,152)(325,131)(305,109)
\put(453,126){\circle*{6}}
\put(453,93){\circle*{6}}
\qbezier(453,126)(453,110)(453,93)
\put(485,126){\circle*{6}}
\qbezier(453,126)(469,126)(485,126)
\put(485,93){\circle*{6}}
\qbezier(485,126)(485,110)(485,93)
\qbezier(453,93)(469,93)(485,93)
\qbezier(453,126)(469,110)(485,93)
\qbezier(485,126)(469,110)(453,93)
\put(513,126){\circle*{6}}
\qbezier(485,126)(499,126)(513,126)
\put(546,93){\circle*{6}}
\qbezier(485,93)(515,93)(546,93)
\put(513,93){\circle*{6}}
\qbezier(513,126)(513,110)(513,93)
\put(546,126){\circle*{6}}
\qbezier(513,126)(529,126)(546,126)
\qbezier(546,126)(546,110)(546,93)
\qbezier(513,126)(529,110)(546,93)
\qbezier(546,126)(529,110)(513,93)
\put(471,166){\circle*{6}}
\put(471,141){\circle*{6}}
\qbezier(471,166)(471,154)(471,141)
\put(520,166){\circle*{6}}
\qbezier(471,166)(495,166)(520,166)
\put(520,141){\circle*{6}}
\qbezier(471,141)(495,141)(520,141)
\qbezier(520,166)(520,154)(520,141)
\qbezier(471,166)(495,154)(520,141)
\qbezier(471,141)(495,154)(520,166)
\qbezier(471,141)(462,134)(453,126)
\qbezier(520,141)(533,134)(546,126)
\qbezier(471,166)(427,127)(453,93)
\qbezier(520,166)(571,141)(546,93)
\put(113,213){$I_1$}
\put(278,213){$I_2$}
\put(384,213){$I_3$}
\put(494,213){$I_4$}
\put(57,62){$I_5$}
\put(207,62){$I_6$}
\put(355,62){$I_7$}
\put(21,150){\circle*{6}}
\put(115,150){\circle*{6}}
\put(64,149){\circle*{6}}
\put(64,105){\circle*{6}}
\qbezier(64,149)(64,127)(64,105)
\put(21,106){\circle*{6}}
\put(115,106){\circle*{6}}
\qbezier(21,106)(68,106)(115,106)
\qbezier(21,150)(21,128)(21,106)
\qbezier(115,150)(115,128)(115,106)
\qbezier(21,150)(68,182)(115,150)
\qbezier(21,106)(68,69)(115,106)
\put(1,125){\circle*{6}}
\qbezier(21,150)(11,138)(1,125)
\qbezier(1,125)(11,116)(21,106)
\qbezier(51,125)(57,115)(64,105)
\put(94,125){\circle*{6}}
\put(50,125){\circle*{6}}
\qbezier(51,125)(57,137)(64,149)
\qbezier(1,125)(26,125)(51,125)
\qbezier(50,125)(72,125)(94,125)
\qbezier(1,125)(53,156)(94,125)
\put(488,62){$I_8$}
\qbezier(94,125)(104,116)(115,106)
\qbezier(21,150)(68,150)(115,150)
\qbezier(94,125)(104,138)(115,150)
\put(126,24){Figure 2. $I_i$ for $i\in\{1,2,\cdots,8\}$.}
\end{picture}
\end{center}

Let $H$ be a subgraph of $G$. We say $v\in V(H)$ a \emph{saturated vertex} of $H$ if $d_{H}(v)=d_{G}(v)=4$. If there is an induced cycle such that all of the vertices are saturated vertices of $H$, then we say the cycle a \emph{saturated cycle} of $H$. Especially, a saturated triangle (or quadrilateral) is a saturated cycles of order three (or four). For convenience, if $G$ contains a subgraph isomorphic to $H$, then we only say that $G$ contains $H$.


For any integer $k\ge 2$, let $L_k$ be the graph obtained from $k$ disjoint copies of $K_4$ in linear order, say $D_1,D_2,\cdots,D_k$, by linking any two adjacent copies  $(D_i, D_{i+1})$ with two independent edges, where $i=1,\cdots, k-1$ (see Figure 3).

\begin{center}
\setlength{\unitlength}{0.2mm}
\begin{picture}(566,110)
\put(0,99){\circle*{6}}
\put(0,20){\circle*{6}}
\put(77,99){\circle*{6}}
\qbezier(0,99)(38,99)(77,99)
\put(77,20){\circle*{6}}
\qbezier(0,20)(38,20)(77,20)
\qbezier(77,99)(77,60)(77,20)
\put(156,99){\circle*{6}}
\qbezier(77,99)(116,99)(156,99)
\qbezier(0,99)(38,60)(77,20)
\qbezier(77,99)(38,60)(0,20)
\put(156,99){\circle*{6}}
\put(156,20){\circle*{6}}
\put(236,99){\circle*{6}}
\put(236,20){\circle*{6}}
\put(265,20){\circle*{0}}
\qbezier(156,99)(196,99)(236,99)
\qbezier(156,20)(196,20)(236,20)
\qbezier(236,99)(236,60)(236,20)
\qbezier(265,20)(250,20)(236,20)
\qbezier(156,99)(196,60)(236,20)
\qbezier(236,99)(196,60)(156,20)
\qbezier(0,99)(0,60)(0,20)
\qbezier(156,99)(156,60)(156,20)
\put(274,58){\circle*{2}}
\put(284,58){\circle*{2}}
\put(293,58){\circle*{2}}
\qbezier(77,20)(116,20)(156,20)
\qbezier(236,99)(251,99)(267,99)
\put(330,99){\circle*{6}}
\put(330,20){\circle*{6}}
\put(407,99){\circle*{6}}
\put(407,20){\circle*{6}}
\put(486,99){\circle*{6}}
\put(486,20){\circle*{6}}
\put(566,99){\circle*{6}}
\put(566,20){\circle*{6}}
\qbezier(330,99)(368,99)(407,99)
\qbezier(330,20)(368,20)(407,20)
\qbezier(407,99)(407,60)(407,20)
\qbezier(407,99)(446,99)(486,99)
\qbezier(330,99)(368,60)(407,20)
\qbezier(407,99)(368,60)(330,20)
\qbezier(486,99)(526,99)(566,99)
\qbezier(486,20)(526,20)(566,20)
\qbezier(566,99)(566,60)(566,20)
\qbezier(486,99)(526,60)(566,20)
\qbezier(566,99)(526,60)(486,20)
\qbezier(330,99)(330,60)(330,20)
\qbezier(486,99)(486,60)(486,20)
\qbezier(407,20)(446,20)(486,20)
\qbezier(330,99)(314,99)(298,99)
\qbezier(330,20)(314,20)(298,20)
\put(22,0){$D_1$}\put(182,0){$D_2$}\put(362,0){$D_{k-1}$}\put(512,0){$D_{k}$}
\put(160,-30){Figure 3. $L_k$ for some $k\ge 2$.}
\end{picture}
\end{center}
\vskip 1em

\begin{lemma}\label{le1}
$G$ does not contain $L_k$ for $k\ge3$.
\end{lemma}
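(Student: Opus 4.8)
The plan is to use the minimality of $G$: assuming $G$ contained such a chain, I would construct a strictly smaller connected claw-free $4$-regular graph $G'$ with $\gamma_p(G')>\frac{|V(G')|+1}{5}$, contradicting the choice of $G$. First I would reduce to the case $k=3$: since three consecutive copies $D_i,D_{i+1},D_{i+2}$ of any $L_k$ ($k\ge 3$) already form an $L_3$ with $D_{i+1}$ saturated, it suffices to show that $G$ cannot contain $L_3$. So suppose $G$ contains $L_3$ on $D_1,D_2,D_3$, with $D_i=\{a_i,b_i,c_i,d_i\}$ and joining edges $c_ia_{i+1},d_ib_{i+1}$; then all of $D_2$ together with $c_1,d_1,a_3,b_3$ is saturated, and only the four outer vertices $a_1,b_1,c_3,d_3$ send an edge to $R:=G-V(L_3)$.

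The crucial step I would isolate is that $L_3$ acts as an \emph{inert gadget}: observation entering from $R$ through the four outer vertices cannot propagate inward. Indeed each outer vertex keeps at least two neighbours inside $L_3$ (for instance $a_1$ keeps $c_1,d_1$), so even with all four outer vertices observed no OR 2 step fires into the interior, and the eight inner vertices remain unobserved. Hence any PDS must create observation inside $L_3$ on its own; a short check shows that two internal seeds are necessary and sufficient — for instance $\{a_1,d_3\}$ observes all of $L_3$ — after which each outer vertex, all of whose $L_3$-neighbours are now observed, forwards observation to its $R$-neighbour by OR 2. I would then verify that $L_2$ enjoys exactly the same two properties: it is inert to incoming observation, and two internal seeds are necessary and sufficient, illuminating precisely its four outer $R$-neighbours.

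Next I would delete $V(L_3)$ and splice in a copy of $L_2$, joining its four outer vertices to the same four vertices of $R$; call the result $G'$. It is routine to check that $G'$ is connected, $4$-regular and claw-free, with $|V(G')|=n-4$. Because $L_2$ and $L_3$ are interchangeable as gadgets — each inert to $R$, each needing exactly two internal seeds, and each forwarding to exactly its four outer $R$-neighbours once observed — a minimum PDS of $G$ and one of $G'$ split identically across the gadget interface, so $\gamma_p(G')=\gamma_p(G)$. Then $\gamma_p(G')=\gamma_p(G)>\frac{n+1}{5}>\frac{(n-4)+1}{5}=\frac{|V(G')|+1}{5}$, exhibiting a smaller counterexample and contradicting the minimality of $G$.

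The regularity, claw-freeness and connectivity of $G'$ follow at once from the local picture, so the main obstacle will be the gadget equivalence $\gamma_p(G')=\gamma_p(G)$. The work there is to show that the gadget's contribution to a minimum PDS does not depend on whether $L_2$ or $L_3$ is used: one must account for seeds placed at outer vertices, which observe their $R$-neighbour directly by OR 1, and confirm that the set of boundary vertices handed to $R$ (all four outer $R$-neighbours, eventually observed in both gadgets) is the same, so that the residual cost on the $R$-side coincides. The inert property of the second paragraph is exactly what makes this decomposition clean, since it decouples the gadget interior from $R$ apart from the one-way forwarding.
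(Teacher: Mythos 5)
Your argument breaks at the ``gadget equivalence'' step, and the specific claims it rests on are false. It is not true that two internal seeds are necessary for either gadget: a single seed inside the gadget can complete it by cooperating with observation arriving from $R$. In $L_2$, a seed at the interior vertex $c_1$ observes $D_1\cup\{a_2,b_2\}$, forwards through $a_1,b_1$ to their $R$-neighbours by OR~2, and the gadget is then finished by incoming observation at the single far outer vertex $c_2$ (after which $a_2$ has only $d_2$ unobserved and fires). This ``forward, then receive'' behaviour has no counterpart in $L_3$: any single seed whose propagation reaches $a_1,b_1$ on its own (a seed in $D_1$) halts with exactly $D_1\cup\{a_2,b_2\}$ observed, and then $c_2,a_3,b_3,d_2$ form an unobserved $4$-cycle whose observed neighbours each retain two unobserved neighbours even after incoming observation at both $c_3$ and $d_3$, so OR~2 never fires there again; conversely, a seed in $D_2$ that can exploit incoming observation never makes $a_1,b_1$ forward unaided. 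Consequently a minimum PDS of $G'$ using one seed in $L_2$ in this mode cannot be converted into an equal-size PDS of $G$, so the inequality $\gamma_p(G)\le\gamma_p(G')$ that your contradiction requires is unproved (and the natural simulation genuinely fails); only $\gamma_p(G)\le\gamma_p(G')+1$ is available. That fallback is useless here, because your splice saves only four vertices: $\gamma_p(G')+1\le\frac{(n-4)+1}{5}+1=\frac{n+2}{5}$, which is not below $\frac{n+1}{5}$. This is exactly why the paper's surgery is more drastic: it deletes \emph{all} saturated vertices of $L_3$ and adds the four edges $v_1v_3,v_1v_4,v_2v_3,v_2v_4$, making $G'$ enough smaller that one extra seed can be spent, giving $\gamma_p(G)\le\gamma_p(G')+1<\frac{n+1}{5}$.

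A second, independent gap: you assume every outer vertex of $L_3$ sends its fourth edge into $R$, i.e.\ that $L_3$ is induced. If outer vertices are joined to each other (say $a_1c_3\in E(G)$), those vertices have no $R$-neighbour, your splice does not even yield a $4$-regular graph, and the case must be argued differently. The paper handles it separately: two such edges force $G\cong I_1$, excluded by Lemma~\ref{counterex}, and exactly one such edge is treated by replacing $H$ with a copy of $K_5-e$ and spending one extra seed. Your reduction from $k\ge 4$ to the case $k=3$ agrees with the paper and is fine; the inertness observation and the sufficiency of $\{a_1,d_3\}$ are also correct, but they do not carry the weight your plan assigns to them.
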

\begin{proof}
Otherwise, suppose that $G$ contains a subgraph $H$ isomorphic to $L_3$. Let $v_1, v_2\in V(D_1)$, $v_3, v_4\in V(D_3)$ be the four vertices of degree three in $H$.

Suppose first that $H$ is an induced subgraph of $G$.  Let $G'$ be obtained from $G$ by deleting all saturated vertices in subgraph $H$ of $G$ and adding four extra edges $v_1v_3, v_1v_4, v_2v_3, v_2v_4$. Then $G'$ is also a connected claw-free $4$-regular graph of order $n'$. Since $G'$ is not a counterexample, $\gamma_p(G')\leq \frac {n'+1}5$. Let $M'$ be a $\gamma_p(G')$-set of $G'$. If one of vertices $v_1,v_2,v_3$ and $v_4$ is in $M'$, say $v_1\in M'$, then $M=M'\cup\{x\}$ is a PDS of $G$, where $x\in V(D_3)\setminus \{v_3,v_4\}$. Suppose now that $\{v_1, v_2, v_3, v_4\}\cap M'=\emptyset$. Then $M'$ together with one vertex of $D_2$ forms a PDS of $G$. Note that $|V(G')|=n-6$, then $\gamma_p(G)\leq \mid M\mid \leq \frac {n'+1}5 +1<\frac {n+1}5$, a contradiction.

Now suppose that $H$ is not an induced subgraph of $G$. If there are two extra edges linked four vertices $v_1,v_2,v_3$ and $v_4$, say $v_1v_3, v_2v_4\in E(G)$ then $G\cong I_1$. This contradicts Lemma \ref{counterex}.
 Then without loss of generality suppose that $v_1v_3\in E(G)$ but $v_2v_4\notin E(G)$. We can obtain $G'$ from $G$ by exactly replacing the subgraph $H$ with a subgraph $H'$ isomorphic to $K_5-e$ (the complete graph $K_5$ minus an edge) by identical their two vertices of degree three. Let $M'$ be a $\gamma_p(G')$-set of $G'$. It is easy to check that $V(H')\cap M'\neq \emptyset$. Then $M=M'\setminus V(H')\cup\{v_2, v_4\}$ is a PDS of $G$. Since $|V(G')|=n-7$, $\gamma_p(G)\leq |M| \leq \frac {n'+1}5 +1<\frac {n+1}5$, a contradiction.

Note that if $G$ contains a subgraph $L_k$ for some $k\ge4$, then $G$ contains an induced subgraph isomorphic to $L_3$. Therefore $G$ does not contain any subgraph isomorphic to $L_k$ for $k\ge 3$.
\end{proof}

\noindent
{\bf Remark.} From the above proof, we note that $G$ has some forbidden subgraphs, such as $L_k$ for $k\ge 3$. In fact, there are other forbidden subgraphs in $G$. That is, if $G$ contains such a subgraph, then we can replace the subgraph by some smaller subgraph to obtain a new graph $G'$. There is a power domination set $S'$ of $G'$ such that $|S'|\le \frac{|V(G')|+1}{5}$. Based on $S'$, we can get a desired power domination set of $G$ such that $\gamma_p(G)\le \frac{|V(G)|+1}{5}$, contradicting that $G$ is a counterexample.

\begin{center}
\setlength{\unitlength}{0.23mm}
\begin{picture}(479,243)
\put(247,210){\circle*{4}}
\put(247,157){\circle*{4}}
\qbezier(247,210)(247,184)(247,157)
\put(220,211){\circle*{4}}
\put(191,182){\circle*{4}}
\qbezier(220,211)(205,197)(191,182)
\put(221,156){\circle*{4}}
\qbezier(191,182)(206,169)(221,156)
\put(248,183){\circle*{4}}
\qbezier(220,211)(234,197)(248,183)
\qbezier(248,183)(234,170)(221,156)
\put(191,211){\circle*{4}}
\put(274,184){\circle*{4}}
\qbezier(274,184)(260,198)(247,211)
\put(191,157){\circle*{4}}
\qbezier(191,157)(219,157)(247,157)
\put(247,157){\circle*{4}}
\qbezier(191,211)(191,184)(191,157)
\qbezier(191,211)(219,211)(247,211)
\put(65,131){$J_1$}
\put(29,209){\circle*{4}}
\put(117,209){\circle*{4}}
\put(72,208){\circle*{4}}
\put(72,165){\circle*{4}}
\qbezier(72,208)(72,187)(72,165)
\put(29,165){\circle*{4}}
\put(117,165){\circle*{4}}
\qbezier(29,165)(73,165)(117,165)
\qbezier(29,209)(29,187)(29,165)
\qbezier(117,209)(117,187)(117,165)
\qbezier(29,209)(76,241)(117,209)
\qbezier(29,165)(76,128)(117,165)
\put(15,184){\circle*{4}}
\qbezier(29,209)(22,197)(15,184)
\qbezier(15,184)(22,175)(29,165)
\qbezier(57,184)(64,175)(72,165)
\qbezier(115,209)(72,209)(29,209)
\put(100,183){\circle*{4}}
\qbezier(117,166)(108,175)(100,183)
\qbezier(100,183)(109,196)(118,208)
\put(58,183){\circle*{4}}
\qbezier(57,184)(64,196)(72,208)
\put(58,183){\circle*{4}}
\put(85,198){\circle*{4}}
\qbezier(58,183)(71,191)(85,198)
\put(100,183){\circle*{4}}
\qbezier(100,183)(92,191)(85,198)
\qbezier(58,183)(79,183)(100,183)
\put(92,199){$u$}
\put(1,184){$v$}
\put(165,185){\circle*{4}}
\qbezier(191,157)(178,171)(165,185)
\qbezier(274,184)(260,171)(247,157)
\qbezier(191,211)(178,198)(165,185)
\qbezier(191,211)(175,182)(191,157)
\put(149,184){$u$}
\put(279,183){$v$}
\qbezier(247,211)(268,182)(247,157)
\put(208,135){$J_2$}
\put(141,52){\circle*{4}}
\put(176,52){\circle*{4}}
\qbezier(141,52)(158,52)(176,52)
\put(194,74){\circle*{4}}
\qbezier(176,52)(185,63)(194,74)
\put(194,33){\circle*{4}}
\qbezier(176,52)(185,43)(194,33)
\qbezier(194,74)(194,54)(194,33)
\put(236,74){\circle*{4}}
\qbezier(194,74)(215,74)(236,74)
\put(236,33){\circle*{4}}
\qbezier(194,33)(215,33)(236,33)
\qbezier(236,74)(236,54)(236,33)
\put(253,52){\circle*{4}}
\qbezier(236,74)(244,63)(253,52)
\qbezier(253,52)(244,43)(236,33)
\qbezier(194,74)(215,54)(236,33)
\qbezier(236,74)(215,54)(194,33)
\put(284,52){\circle*{4}}
\qbezier(253,52)(268,52)(284,52)
\put(215,88){\circle*{4}}
\qbezier(141,52)(151,109)(215,88)
\qbezier(284,52)(280,111)(215,88)
\qbezier(215,88)(172,87)(176,52)
\qbezier(215,88)(267,84)(253,52)
\put(202,12){$B_1$}
\put(128,51){$u$}
\put(296,53){$v$}
\put(362,217){\circle*{4}}
\put(362,168){\circle*{4}}
\qbezier(362,217)(362,193)(362,168)
\put(416,217){\circle*{4}}
\qbezier(362,217)(389,217)(416,217)
\put(416,168){\circle*{4}}
\qbezier(362,168)(389,168)(416,168)
\qbezier(416,217)(416,193)(416,168)
\put(389,218){\circle*{4}}
\put(362,191){\circle*{4}}
\qbezier(389,218)(375,205)(362,191)
\put(390,167){\circle*{4}}
\qbezier(362,191)(376,179)(390,167)
\put(416,193){\circle*{4}}
\qbezier(389,218)(402,206)(416,193)
\qbezier(416,193)(403,180)(390,167)
\put(341,192){\circle*{4}}
\qbezier(362,217)(351,205)(341,192)
\qbezier(341,192)(351,180)(362,168)
\qbezier(362,217)(348,192)(362,168)
\qbezier(341,192)(331,245)(416,217)
\put(415,150){\circle*{4}}
\qbezier(415,150)(454,183)(416,217)
\qbezier(341,192)(331,147)(415,150)
\put(418,166){$u$}
\put(420,144){$v$}
\put(375,136){$J_3$}
\put(100,112){Operation 1}
\put(212,112){$\Downarrow$}
\end{picture}
\end{center}
\begin{center}
\setlength{\unitlength}{0.23mm}
\begin{picture}(500,233)
\put(248,91){\circle*{4}}
\put(248,38){\circle*{4}}
\qbezier(248,91)(248,65)(248,38)
\put(221,92){\circle*{4}}
\put(192,63){\circle*{4}}
\qbezier(221,92)(206,78)(192,63)
\put(222,37){\circle*{4}}
\qbezier(192,63)(207,50)(222,37)
\put(249,64){\circle*{4}}
\qbezier(221,92)(235,78)(249,64)
\qbezier(249,64)(235,51)(222,37)
\put(192,91){\circle*{4}}
\put(192,38){\circle*{4}}
\qbezier(192,38)(220,38)(248,38)
\put(248,38){\circle*{4}}
\qbezier(192,91)(192,65)(192,38)
\put(66,12){$J_6$}
\put(30,90){\circle*{4}}
\put(118,90){\circle*{4}}
\put(73,89){\circle*{4}}
\put(73,46){\circle*{4}}
\qbezier(73,89)(73,68)(73,46)
\put(30,46){\circle*{4}}
\put(118,46){\circle*{4}}
\qbezier(30,46)(74,46)(118,46)
\qbezier(30,90)(30,68)(30,46)
\qbezier(118,90)(118,68)(118,46)
\qbezier(30,90)(77,122)(118,90)
\qbezier(30,46)(77,9)(118,46)
\put(16,65){\circle*{4}}
\qbezier(30,90)(23,78)(16,65)
\qbezier(16,65)(23,56)(30,46)
\qbezier(116,90)(73,90)(30,90)
\put(59,64){\circle*{4}}
\put(59,64){\circle*{4}}
\put(101,64){\circle*{4}}
\put(1,64){$u$}
\put(46,64){$v$}
\put(166,66){\circle*{4}}
\qbezier(192,38)(179,52)(166,66)
\qbezier(192,91)(179,79)(166,66)
\qbezier(192,91)(176,63)(192,38)
\put(150,65){$u$}
\put(257,37){$v$}
\put(209,16){$J_7$}
\put(61,217){\circle*{4}}
\put(61,168){\circle*{4}}
\qbezier(61,217)(61,193)(61,168)
\put(115,217){\circle*{4}}
\qbezier(61,217)(88,217)(115,217)
\put(115,168){\circle*{4}}
\qbezier(61,168)(88,168)(115,168)
\qbezier(115,217)(115,193)(115,168)
\put(88,218){\circle*{4}}
\put(61,191){\circle*{4}}
\qbezier(88,218)(74,205)(61,191)
\put(89,167){\circle*{4}}
\qbezier(61,191)(75,179)(89,167)
\put(115,193){\circle*{4}}
\qbezier(88,218)(101,206)(115,193)
\qbezier(115,193)(102,180)(89,167)
\put(44,192){\circle*{4}}
\qbezier(61,217)(52,205)(44,192)
\qbezier(44,192)(52,180)(61,168)
\put(14,217){$u$}
\put(14,167){$v$}
\put(64,136){$J_4$}\put(330,136){$J_5$}
\put(289,60){$\Rightarrow$}\put(240,-10){Operation 4}
\put(86,64){$w$}
\put(257,91){$w$}
\qbezier(59,64)(66,77)(73,89)
\qbezier(59,64)(66,55)(73,46)
\qbezier(101,64)(109,77)(118,90)
\qbezier(101,64)(109,55)(118,46)
\qbezier(192,91)(220,91)(248,91)
\put(27,217){\circle*{4}}
\qbezier(27,217)(35,205)(44,192)
\put(26,168){\circle*{4}}
\qbezier(44,192)(35,180)(26,168)
\qbezier(27,217)(44,217)(61,217)
\qbezier(26,168)(43,168)(61,168)
\put(126,216){$w$}
\put(124,166){$x$}
\put(147,191){$\Rightarrow$}\put(107,122){Operation 2}
\put(190,216){\circle*{4}}
\put(240,216){\circle*{4}}
\qbezier(190,216)(215,216)(240,216)
\put(215,191){\circle*{4}}
\qbezier(190,216)(202,204)(215,191)
\qbezier(240,216)(227,204)(215,191)
\put(192,168){\circle*{4}}
\qbezier(215,191)(203,180)(192,168)
\put(238,168){\circle*{4}}
\qbezier(192,168)(215,168)(238,168)
\qbezier(215,191)(226,180)(238,168)
\put(179,215){$u$}
\put(177,170){$v$}
\put(249,217){$w$}
\put(247,168){$x$}
\put(339,42){\circle*{4}}
\put(370,86){\circle*{4}}
\qbezier(339,42)(354,64)(370,86)
\put(403,42){\circle*{4}}
\qbezier(370,86)(386,64)(403,42)
\qbezier(339,42)(371,42)(403,42)
\put(326,43){$u$}
\put(405,43){$v$}
\put(382,87){$w$}
\put(311,218){\circle*{4}}
\put(311,170){\circle*{4}}
\qbezier(311,218)(311,194)(311,170)
\put(295,192){\circle*{4}}
\qbezier(311,218)(303,205)(295,192)
\qbezier(295,192)(303,181)(311,170)
\put(359,218){\circle*{4}}
\qbezier(311,218)(335,218)(359,218)
\put(359,170){\circle*{4}}
\qbezier(311,170)(335,170)(359,170)
\qbezier(359,218)(359,194)(359,170)
\qbezier(311,218)(335,194)(359,170)
\qbezier(359,218)(335,194)(311,170)
\put(376,193){\circle*{4}}
\qbezier(359,218)(367,206)(376,193)
\qbezier(376,193)(367,182)(359,170)
\put(335,194){\circle*{4}}
\put(335,194){\circle*{4}}
\put(440,217){\circle*{4}}
\put(487,217){\circle*{4}}
\qbezier(440,217)(463,217)(487,217)
\put(440,170){\circle*{4}}
\qbezier(440,217)(440,194)(440,170)
\put(487,170){\circle*{4}}
\qbezier(440,170)(463,170)(487,170)
\qbezier(487,217)(487,194)(487,170)
\put(427,193){\circle*{4}}
\qbezier(440,217)(433,205)(427,193)
\qbezier(427,193)(433,182)(440,170)
\put(501,192){\circle*{4}}
\qbezier(487,217)(494,205)(501,192)
\qbezier(501,192)(494,181)(487,170)
\qbezier(440,217)(463,194)(487,170)
\qbezier(487,217)(463,194)(440,170)
\put(396,192){$\Rightarrow$}\put(360,122){Operation 3}
\put(292,177){$u$}
\put(374,178){$v$}
\put(422,177){$u$}
\put(500,176){$v$}
\end{picture}
\end{center}

\begin{center}
\setlength{\unitlength}{0.23mm}
\begin{picture}(571,259)
\put(358,19){$J_{11}$}
\put(23,96){\circle*{4}}
\put(111,96){\circle*{4}}
\put(66,95){\circle*{4}}
\put(66,52){\circle*{4}}
\qbezier(66,95)(66,74)(66,52)
\put(23,52){\circle*{4}}
\put(111,52){\circle*{4}}
\qbezier(23,52)(67,52)(111,52)
\qbezier(23,96)(23,74)(23,52)
\qbezier(111,96)(111,74)(111,52)
\qbezier(23,96)(70,128)(111,96)
\qbezier(23,52)(70,15)(111,52)
\put(9,71){\circle*{4}}
\qbezier(23,96)(16,84)(9,71)
\qbezier(9,71)(16,62)(23,52)
\qbezier(109,96)(66,96)(23,96)
\put(92,74){\circle*{4}}
\put(9,70){\circle*{4}}
\put(99,73){$u$}
\put(295,53){$v$}
\put(139,71){$\Rightarrow$}\put(95,21){Operation 6}\put(205,15){$B_2$}
\put(440,99){$w$}
\qbezier(92,74)(79,85)(66,95)
\qbezier(92,74)(79,63)(66,52)
\qbezier(9,70)(60,83)(111,96)
\qbezier(9,70)(60,61)(111,52)
\put(422,168){\circle*{4}}
\put(453,210){\circle*{4}}
\qbezier(422,168)(437,189)(453,210)
\put(453,179){\circle*{4}}
\qbezier(453,210)(453,195)(453,179)
\qbezier(422,168)(437,174)(453,179)
\put(407,208){$u$}
\put(487,168){$v$}
\put(114,211){\circle*{4}}
\put(114,163){\circle*{4}}
\qbezier(114,211)(114,187)(114,163)
\put(71,211){\circle*{4}}
\qbezier(114,211)(92,211)(71,211)
\put(162,211){\circle*{4}}
\qbezier(114,211)(138,211)(162,211)
\put(162,163){\circle*{4}}
\qbezier(114,163)(138,163)(162,163)
\qbezier(162,211)(162,187)(162,163)
\qbezier(114,211)(138,187)(162,163)
\qbezier(162,211)(138,187)(114,163)
\put(185,109){\circle*{4}}
\put(232,109){\circle*{4}}
\qbezier(185,109)(208,109)(232,109)
\put(185,62){\circle*{4}}
\qbezier(185,109)(185,86)(185,62)
\put(232,62){\circle*{4}}
\qbezier(185,62)(208,62)(232,62)
\qbezier(232,109)(232,86)(232,62)
\put(172,85){\circle*{4}}
\qbezier(185,109)(178,97)(172,85)
\qbezier(172,85)(178,74)(185,62)
\put(246,85){\circle*{4}}
\qbezier(232,109)(239,97)(246,85)
\qbezier(246,85)(239,74)(232,62)
\qbezier(185,109)(208,86)(232,62)
\qbezier(232,109)(208,86)(185,62)
\put(383,185){$\Rightarrow$}\put(353,145){Operation 5}
\put(10,164){$u$}
\put(173,164){$v$}
\put(204,32){$u$}
\qbezier(172,85)(209,85)(246,85)
\put(207,44){\circle*{4}}
\qbezier(172,85)(173,46)(207,44)
\qbezier(246,85)(250,45)(207,44)
\put(71,163){\circle*{4}}
\qbezier(71,211)(71,187)(71,163)
\qbezier(71,163)(92,163)(114,163)
\put(23,211){\circle*{4}}
\qbezier(23,211)(47,211)(71,211)
\put(23,163){\circle*{4}}
\qbezier(23,211)(23,187)(23,163)
\qbezier(23,163)(47,163)(71,163)
\qbezier(23,211)(47,187)(71,163)
\qbezier(71,211)(47,187)(23,163)
\qbezier(23,211)(95,245)(162,211)
\put(422,210){\circle*{4}}
\qbezier(422,210)(437,210)(453,210)
\qbezier(422,210)(437,195)(453,179)
\qbezier(422,210)(422,189)(422,168)
\put(477,168){\circle*{4}}
\qbezier(453,210)(465,189)(477,168)
\qbezier(453,179)(465,174)(477,168)
\qbezier(422,168)(449,168)(477,168)
\put(309,100){\circle*{4}}
\put(309,54){\circle*{4}}
\qbezier(309,100)(309,77)(309,54)
\put(355,100){\circle*{4}}
\qbezier(309,100)(332,100)(355,100)
\put(355,54){\circle*{4}}
\qbezier(355,100)(355,77)(355,54)
\qbezier(309,54)(332,54)(355,54)
\qbezier(309,54)(332,77)(355,100)
\qbezier(309,100)(332,77)(355,54)
\put(370,76){\circle*{4}}
\qbezier(355,100)(362,88)(370,76)
\qbezier(370,76)(362,65)(355,54)
\put(386,99){\circle*{4}}
\qbezier(370,76)(378,88)(386,99)
\put(386,54){\circle*{4}}
\qbezier(370,76)(378,65)(386,54)
\qbezier(386,99)(386,77)(386,54)
\put(432,99){\circle*{4}}
\qbezier(386,99)(409,99)(432,99)
\put(432,54){\circle*{4}}
\qbezier(432,99)(432,77)(432,54)
\qbezier(386,54)(409,54)(432,54)
\qbezier(386,99)(409,77)(432,54)
\qbezier(432,99)(409,77)(386,54)
\put(493,100){\circle*{4}}
\put(493,55){\circle*{4}}
\qbezier(493,100)(493,78)(493,55)
\put(540,100){\circle*{4}}
\qbezier(493,100)(516,100)(540,100)
\put(540,55){\circle*{4}}
\qbezier(540,100)(540,78)(540,55)
\qbezier(493,55)(516,55)(540,55)
\qbezier(493,100)(516,78)(540,55)
\qbezier(540,100)(516,78)(493,55)
\put(458,76){$\Rightarrow$}\put(428,26){Operation 7}
\put(293,103){$u$}
\put(439,51){$x$}
\put(480,99){$u$}
\put(481,55){$v$}
\put(547,100){$w$}
\put(546,54){$x$}
\put(228,189){\circle*{4}}
\put(335,189){\circle*{4}}
\qbezier(228,189)(281,189)(335,189)
\put(228,162){\circle*{4}}
\qbezier(228,189)(228,176)(228,162)
\put(335,162){\circle*{4}}
\qbezier(228,162)(281,162)(335,162)
\qbezier(335,189)(335,176)(335,162)
\put(265,189){\circle*{4}}
\put(265,161){\circle*{4}}
\qbezier(265,189)(265,175)(265,161)
\put(297,188){\circle*{4}}
\put(297,161){\circle*{4}}
\qbezier(297,188)(297,175)(297,161)
\qbezier(228,189)(246,175)(265,161)
\qbezier(265,189)(246,176)(228,162)
\qbezier(232.69,165.42)(228,162)(228,162)
\qbezier(297,188)(316,175)(335,162)
\qbezier(335,189)(316,175)(297,161)
\put(228,213){\circle*{4}}
\qbezier(228,189)(228,201)(228,213)
\put(335,213){\circle*{4}}
\qbezier(335,189)(335,201)(335,213)
\qbezier(228,213)(281,213)(335,213)
\put(228,241){\circle*{4}}
\qbezier(228,241)(228,227)(228,213)
\put(335,241){\circle*{4}}
\qbezier(228,241)(281,241)(335,241)
\qbezier(335,241)(335,227)(335,213)
\qbezier(228,241)(281,227)(335,213)
\qbezier(335,241)(281,227)(228,213)
\qbezier(228,241)(186,205)(228,162)
\put(345,240){$u$}
\put(346,163){$v$}
\put(83,142){$J_8$}
\put(274,139){$J_9$}
\put(50,14){$J_{10}$}
\end{picture}
\end{center}

\begin{center}
\setlength{\unitlength}{0.23mm}
\begin{picture}(565,259)
\put(45,148){$J_{12}$}
\put(48,55){$v$}
\put(123,222){$w$}
\put(474,236){$u$}
\put(542,234){$v$}
\put(39,222){\circle*{4}}
\put(86,222){\circle*{4}}
\qbezier(39,222)(62,222)(86,222)
\put(39,175){\circle*{4}}
\qbezier(39,222)(39,199)(39,175)
\put(86,175){\circle*{4}}
\qbezier(39,175)(62,175)(86,175)
\qbezier(86,222)(86,199)(86,175)
\put(26,198){\circle*{4}}
\qbezier(39,222)(32,210)(26,198)
\qbezier(26,198)(32,187)(39,175)
\put(100,198){\circle*{4}}
\qbezier(86,222)(93,210)(100,198)
\qbezier(100,198)(93,187)(86,175)
\qbezier(39,222)(62,199)(86,175)
\qbezier(86,222)(62,199)(39,175)
\put(140,198){$\Rightarrow$}\put(100,148){Operation 8}
\put(1,221){$u$}
\put(2,174){$v$}
\put(95,99){\circle*{4}}
\put(95,54){\circle*{4}}
\qbezier(95,99)(95,77)(95,54)
\put(141,99){\circle*{4}}
\qbezier(95,99)(118,99)(141,99)
\put(141,54){\circle*{4}}
\qbezier(141,99)(141,77)(141,54)
\qbezier(95,54)(118,54)(141,54)
\qbezier(95,54)(118,77)(141,99)
\qbezier(95,99)(118,77)(141,54)
\put(172,99){\circle*{4}}
\put(172,54){\circle*{4}}
\qbezier(172,99)(172,77)(172,54)
\put(218,99){\circle*{4}}
\qbezier(172,99)(195,99)(218,99)
\put(218,54){\circle*{4}}
\qbezier(218,99)(218,77)(218,54)
\qbezier(172,54)(195,54)(218,54)
\qbezier(172,99)(195,77)(218,54)
\qbezier(218,99)(195,77)(172,54)
\put(486,236){\circle*{4}}
\put(486,191){\circle*{4}}
\qbezier(486,236)(486,214)(486,191)
\put(533,236){\circle*{4}}
\qbezier(486,236)(509,236)(533,236)
\put(533,191){\circle*{4}}
\qbezier(533,236)(533,214)(533,191)
\qbezier(486,191)(509,191)(533,191)
\qbezier(486,236)(509,214)(533,191)
\qbezier(533,236)(509,214)(486,191)
\put(444,201){$\Rightarrow$}\put(400,140){Operation 9}
\put(47,98){$u$}
\put(124,175){$x$}
\put(170,222){$u$}
\put(170,171){$v$}
\put(215,222){$w$}
\put(215,172){$x$}
\put(289,189){\circle*{4}}
\put(396,189){\circle*{4}}
\qbezier(289,189)(342,189)(396,189)
\put(289,162){\circle*{4}}
\qbezier(289,189)(289,176)(289,162)
\put(396,162){\circle*{4}}
\qbezier(289,162)(342,162)(396,162)
\qbezier(396,189)(396,176)(396,162)
\put(326,189){\circle*{4}}
\put(326,161){\circle*{4}}
\qbezier(326,189)(326,175)(326,161)
\put(358,188){\circle*{4}}
\put(358,161){\circle*{4}}
\qbezier(358,188)(358,175)(358,161)
\qbezier(289,189)(307,175)(326,161)
\qbezier(358,188)(377,175)(396,162)
\qbezier(396,189)(377,175)(358,161)
\put(289,213){\circle*{4}}
\qbezier(289,189)(289,201)(289,213)
\put(396,213){\circle*{4}}
\qbezier(396,189)(396,201)(396,213)
\qbezier(289,213)(342,213)(396,213)
\put(289,241){\circle*{4}}
\qbezier(289,241)(289,227)(289,213)
\put(396,241){\circle*{4}}
\qbezier(289,241)(342,241)(396,241)
\qbezier(396,241)(396,227)(396,213)
\qbezier(289,241)(342,227)(396,213)
\qbezier(396,241)(342,227)(289,213)
\put(275,240){$u$}
\put(405,240){$v$}
\put(327,145){$J_{13}$}
\put(137,25){$J_{14}$}
\qbezier(326,189)(307,176)(289,162)
\put(13,221){\circle*{4}}
\qbezier(26,198)(19,210)(13,221)
\put(15,175){\circle*{4}}
\qbezier(26,198)(20,187)(15,175)
\put(113,223){\circle*{4}}
\qbezier(100,198)(106,211)(113,223)
\put(114,176){\circle*{4}}
\qbezier(100,198)(107,187)(114,176)
\put(194,196){\circle*{4}}
\put(208,223){\circle*{4}}
\qbezier(194,196)(201,210)(208,223)
\put(181,223){\circle*{4}}
\qbezier(181,223)(187,210)(194,196)
\put(182,173){\circle*{4}}
\qbezier(194,196)(188,185)(182,173)
\put(207,172){\circle*{4}}
\qbezier(194,196)(200,184)(207,172)
\put(265,162){\circle*{4}}
\qbezier(265,162)(277,162)(289,162)
\put(422,162){\circle*{4}}
\qbezier(396,162)(409,162)(422,162)
\put(262,174){$w$}
\put(419,175){$x$}
\put(486,160){\circle*{4}}
\qbezier(486,191)(486,176)(486,160)
\put(533,159){\circle*{4}}
\qbezier(533,191)(533,175)(533,159)
\put(472,162){$w$}
\put(544,160){$x$}
\qbezier(141,99)(156,99)(172,99)
\qbezier(141,54)(156,54)(172,54)
\put(63,99){\circle*{4}}
\qbezier(63,99)(79,99)(95,99)
\put(63,54){\circle*{4}}
\qbezier(63,54)(79,54)(95,54)
\put(253,99){\circle*{4}}
\qbezier(218,99)(235,99)(253,99)
\put(253,54){\circle*{4}}
\qbezier(218,54)(235,54)(253,54)
\put(266,99){$w$}
\put(269,53){$x$}
\put(301,76){$\Rightarrow$}\put(261,26){Operation 10}
\put(348,93){\circle*{4}}
\put(395,93){\circle*{4}}
\qbezier(348,93)(371,93)(395,93)
\put(349,56){\circle*{4}}
\put(397,56){\circle*{4}}
\qbezier(349,56)(373,56)(397,56)
\put(332,95){$u$}
\put(333,58){$v$}
\put(405,93){$w$}
\put(408,57){$x$}
\put(66,-9){Figure 4. Forbidden subgraphs $J_i$ for $i\in\{1,2,\cdots,14\}$.}
\end{picture}
\end{center}

Figure 4 give some replacement operations of graphs. For each operation, we replace $J_i$ by a smaller subgraph with identical labeled vertices. It is easy to check that the resulting graph is also 4-regular and claw-free. For example, for $i=1,2,3$, we can replace each of $J_i$ by $B_1$ by identifying the two vertices $u$, $v$ and keeping the neighborhood of the two vertices in $G'=(V,E(G)\setminus E(J_i))$.


\begin{lemma}\label{forbsub}
For each $i\in\{1,2,\cdots,14\}$, we have the following statement.

(i) For $i\in\{1,2,3,4,5,8,9,10,12,13\}$, $G$ does not contain $J_i$ as a subgraph.

(ii) For $i\in \{6,7,11,14\}$, $G$ does not contain $J_i$ as an induced subgraph.

\end{lemma}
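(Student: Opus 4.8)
The plan is to mimic the replacement argument already used for Lemma~\ref{le1}, handling all fourteen forbidden configurations by one uniform template. For each $i$ I would assume for contradiction that $G$ contains $J_i$ (as a subgraph when $i\in\{1,2,3,4,5,8,9,10,12,13\}$ and as an induced subgraph when $i\in\{6,7,11,14\}$), apply the corresponding Operation of Figure~4 to the copy of $J_i$ inside $G$, and obtain a new graph $G'$ on $n'=n-c_i$ vertices, where $c_i$ is the number of vertices the operation deletes. The first thing to check, operation by operation, is that $G'$ is again connected, claw-free and $4$-regular: this is the content of the remark following Figure~4, since every replacement preserves the degrees at the labeled boundary vertices $u,v,w,x$ and creates no induced $K_{1,3}$. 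Then $G'$ is an admissible smaller instance, so by the minimality of $G$ we have $\gamma_p(G')\le\frac{n'+1}{5}$.

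Next, for each operation I would fix a $\gamma_p(G')$-set $M'$ and modify it into a power dominating set $M$ of $G$, keeping the part of $M'$ lying outside the replaced gadget and re-choosing at most $t_i$ vertices inside the region that was changed, so that $|M|\le\gamma_p(G')+t_i$. The verification that $M$ is a PDS of $G$ is a local propagation check: the boundary vertices $u,v,w,x$ are observed in $G$ for the same reason they are observed in $G'$, and because the deleted region is assembled from $K_4$-type blocks attached by independent edges, once a single vertex of such a block is observed, Observation Rule~2 forces the entire block, and hence the boundary, to be observed in $G$ exactly as in $G'$. Each case then closes with the arithmetic $\frac{n'+1}{5}+t_i=\frac{n-c_i+1}{5}+t_i\le\frac{n+1}{5}$, which holds as soon as $5t_i\le c_i$; I would confirm this count for every operation (the values $c=6,t=1$ and $c=7,t=1$ from Lemma~\ref{le1} are typical), giving $\gamma_p(G)\le|M|\le\frac{n+1}{5}$ and contradicting that $G$ is a counterexample.

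For the induced cases $i\in\{6,7,11,14\}$ I would first dispose of the non-induced occurrences, exactly as was done for $L_3$ in Lemma~\ref{le1}: if the copy of $J_i$ carries extra chords among its boundary vertices, then either $G$ is forced to be one of $K_5,I_1,\dots,I_8$, which are already excluded by Lemma~\ref{counterex}, or the extra chord lets me substitute a $K_5-e$ gadget obtained by identifying two boundary vertices and rerun the same counting. The main obstacle I anticipate is not any individual step but the bookkeeping: the connectivity and claw-freeness check, the propagation check, and the inequality $5t_i\le c_i$ must each be carried out separately for all ten operations, and in the induced cases one must also enumerate which chords can appear without creating a claw. It is precisely the claw-free and $4$-regular hypotheses that keep this enumeration finite and impose the rigid $K_4$-block structure making the propagation argument uniform across all fourteen gadgets.
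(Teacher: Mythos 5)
Your overall strategy---apply the Figure 4 operation, invoke minimality of the counterexample on the smaller graph $G'$, and convert a $\gamma_p(G')$-set $M'$ into a PDS of $G$---is indeed the paper's strategy, but your uniform cost accounting breaks exactly on the cases the paper writes out in full. Operation 1 replaces the ten-vertex $J_1$ (or $J_2$, $J_3$) by the nine-vertex $B_1$, so $c_1=1$ and your requirement $5t_i\le c_i$ forces $t_1=0$: you may not place a single new observer inside the restored copy of $J_1$. Yet your recipe builds $M$ by keeping $M'$ outside the gadget and re-choosing at most $t_i$ vertices inside, and the interior of $J_1$ cannot be observed from its boundary alone (each of $u,v$ has two unobserved neighbors inside $J_1$, so OR 2 never fires there); hence under your accounting $t_1\ge 1$, and the count gives $\frac{(n-1)+1}{5}+1=\frac{n}{5}+1>\frac{n+1}{5}$, which is no contradiction at all. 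The paper instead performs an exchange at \emph{zero} net cost: it first argues that every $\gamma_p(G')$-set must contain at least one saturated vertex of the inserted gadget $B_1$, and then trades those vertices for vertices of $J_1$ (one-for-one or two-for-two, with a case analysis on whether $M'$ meets $B_1$ in one or in at least two saturated vertices, the replacements chosen nonadjacent to $u$ or $v$), so that $|M|\le|M'|\le\frac{n'+1}{5}\le\frac{n+1}{5}$. This ``credit inside the replacement gadget'' is the essential idea for all the small-deletion operations (Operation 3 for $J_5$ also deletes only one vertex net), and it is absent from your proposal; the values $c=6$, $t=1$ of Lemma \ref{le1} are not typical.

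A second genuine gap: you propose to verify that $G'$ is connected, but that verification fails. Operation 10 deletes the entire $L_2$ inside $J_{14}$ and rejoins the four boundary vertices only through the two new edges $uw$ and $vx$; if the two sides of $G$ meet only through that $L_2$, the graph $G'$ splits, and indeed the paper's proof of the $J_{14}$ case begins with ``suppose $G'$ contains two components.'' Since the minimality hypothesis applies only to connected claw-free $4$-regular graphs, you must apply it componentwise, which weakens the bound to $\frac{n_1+1}{5}+\frac{n_2+1}{5}=\frac{n'+2}{5}$ and turns your inequality into ``$5t_i$ plus (number of components minus one) is at most $c_i$''; this still closes for Operation 10 only because it deletes eight vertices, and that has to be checked rather than assumed. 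Finally, a point of logic: in part (ii) there are no ``non-induced occurrences to dispose of.'' The hypothesis for contradiction is that $J_i$ occurs as an \emph{induced} subgraph, and that hypothesis is precisely what guarantees the operation's output is claw-free and $4$-regular; your plan to rerun the chord analysis of Lemma \ref{le1} for these four gadgets is both unnecessary for the stated lemma and unlikely to succeed, which is presumably why the paper claims only the induced version for $i\in\{6,7,11,14\}$.
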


\begin{proof}
We only prove $G$ does not contain $J_1$ as a subgraph or $J_{14}$ as an induced subgraph. The proof for the other forbidden subgraphs is completely similar and we omit it here.

If $G$ contains $J_1$ as a subgraph, then we can obtain $G'$ from $G$ by Operation 1. Then $\gamma_p(G')\leq \frac {n'+1}5$, where $n'$ is the order of $G'$. Suppose that $M'$ is a $\gamma_p(G')$-set. It is easy to check that $M'$ contains at least one saturated vertex of $B_1$. If $M'$ contains at least two saturated vertices of $B_1$, say $w_1,w_2$, then $M'\setminus\{w_1,w_2\}$ together with any two saturated vertices of $V(J_1)$ will form a PDS of $G$. If $M'$ contains exactly one saturated vertex of $B_1$, say $w_1$, then $w_1$ is in the copy of $K_4$ in $B_1$. Hence $u$ or $v$ must receive message from $M'\setminus V(B_1)$. Therefore, $M'\setminus\{w_1\}$ together with any saturated vertex nonadjacent to $u$ or $v$ in $J_1$ form a PDS of $G$. Thus, $|M|\leq|M'|\leq \frac {n'+1}5\leq \frac {n+1}5$. A contradiction.

If $G$ contains $J_{14}$ as an induced subgraph, then we can obtain $G'$ from $G$ by operation 10. Suppose first that $G'$ contains two components, where their order are $n_1$ and $n_2$, respectively. 
Let $M'$ be a $\gamma_p(G')$-set. If $M'\cap \{u, v, x, w\}\neq\emptyset$, say $u$, then we add a vertex $u'$ with $d_{J_{14}}(u, u')=3$ to $M_1\cup M_2$. The resulting vertex set $M$ is a PDS of $G$. Then $|M|\leq \frac {n_1+1}5+\frac{n_2+1}5+1\leq \frac {n-6+2}5+1\leq \frac {n+1}5$, a contradiction.
Suppose that $M'\cap \{u, v, x, w\}=\emptyset$. By the symmetry if neither of $\{u,w\}$ can send message to each other, then $u$ receives message from $N_{G'}(u)\setminus \{w\}$ which induces a copy of $K_4$ in  $G'$. Therefore $u$ can send message to $w$, a contradiction. Thus at least one vertex of $\{u, v, x, w\}$ can send message to its neighbor in $L_2$ of $G$, say $u$.  $M=M'\cup\{u'\}$ is a PDS of $G$, where $d_{J_{14}}(u, u')=3$. Then $|M|\leq \frac {n_1+1}5+\frac{n_2+1}5+1\leq \frac {n-6+2}5+1\leq \frac {n+1}5$, a contradiction. If $G'$ is the connected graph, then the discussion is similar and slightly easy than above. We omit it here.
\end{proof}

Let $A=\{A_1,A_2,A_3\}$ be the collection of graphs in Figure 5. The three subgraphs need to pay our more attention.

\begin{center}
\setlength{\unitlength}{0.15mm}
\begin{picture}(465,130)
\put(342,124){\circle*{6}}
\put(342,23){\circle*{6}}
\put(454,124){\circle*{6}}
\put(454,23){\circle*{6}}
\qbezier(342,124)(342,74)(342,23)
\qbezier(342,124)(398,124)(454,124)
\qbezier(454,124)(454,74)(454,23)
\qbezier(342,23)(398,23)(454,23)
\put(10,91){\circle*{6}}
\put(90,91){\circle*{6}}
\put(10,17){\circle*{6}}
\put(90,17){\circle*{6}}
\qbezier(10,91)(50,91)(90,91)
\qbezier(10,17)(50,17)(90,17)
\qbezier(10,17)(50,54)(90,91)
\qbezier(10,91)(50,54)(90,17)
\qbezier(10,91)(10,54)(10,17)
\qbezier(90,91)(90,54)(90,17)
\put(44,142){\circle*{6}}
\qbezier(44,142)(67,117)(90,91)
\qbezier(44,142)(27,117)(10,91)
\put(205,129){\circle*{6}}
\put(155,17){\circle*{6}}
\qbezier(205,129)(180,73)(155,17)
\put(275,21){\circle*{6}}
\qbezier(205,129)(240,75)(275,21)
\qbezier(155,17)(215,19)(275,21)
\put(182,74){\circle*{6}}
\put(211,20){\circle*{6}}
\qbezier(182,74)(196,47)(211,20)
\put(241,74){\circle*{6}}
\qbezier(182,74)(211,74)(241,74)
\qbezier(241,74)(226,47)(211,20)
\put(394,123){\circle*{6}}
\put(344,73){\circle*{6}}
\qbezier(394,123)(369,98)(344,73)
\put(400,25){\circle*{6}}
\qbezier(344,73)(372,49)(400,25)
\put(454,73){\circle*{6}}
\qbezier(394,123)(424,98)(454,73)
\qbezier(454,73)(427,49)(400,25)
\put(40,-20){$A_1$}\put(195,-20){$A_2$}\put(382,-20){$A_3$}
\put(80,-60){Figure 5. $A_i$ for $i\in\{1,2,3\}$.}
\end{picture}
\end{center}

\vskip 2em

For $i,j\in\{1,2,3\}$, let $A_i\circ A_j$ be the graph obtained from $A_i$ and $A_j$ by identifying exactly one vertex of degree two of them. We call the sharing vertex a \emph{focal vertex}. Note that $A_1\circ A_1\cong J_{11}$. Then $G$ contains no induced subgraph isomorphic to $A_1\circ A_1$.

\begin{lemma}\label{le3}
Let $H_1\cong A_i$ and $H_2\cong A_j$ be two subgraphs of $G$, where $i\in\{1,2,3\}$ and $j\in\{2,3\}$. If there is a vertex $x$ in $H_1\cap H_2$, then $H_1\cup H_2\cong A_i\circ A_j$ and $x$ is a focal vertex of $A_i\circ A_j$.
\end{lemma}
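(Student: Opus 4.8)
The plan is to show that a shared vertex $x$ must occupy the most peripheral position in each subgraph — it must have degree two in both $A_i$ and $A_j$ — and that no further overlap between the two copies is possible. Once both facts are in hand, the conclusion $H_1\cup H_2\cong A_i\circ A_j$ with focal vertex $x$ is immediate from the way $A_i\circ A_j$ is defined.

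First I would record, for each $A_i$, the degree of every vertex inside $A_i$, and use $4$-regularity to translate these internal degrees into information about $G$: a vertex of degree four inside $A_i$ is automatically a saturated vertex of $G$ (all four of its $G$-neighbours lie in $A_i$), a vertex of degree three has exactly one $G$-neighbour outside $A_i$, and a vertex of degree two has exactly two. The focal vertices are precisely the degree-two vertices, and in $A_i\circ A_j$ the identified focal vertex has degree four, carrying two neighbours on each side. With this bookkeeping fixed, the whole problem reduces to locating $x$ within $H_1$ and $H_2$.

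The heart of the argument is to prove that $x$ has degree two in both $H_1$ and $H_2$. Suppose instead, say, that $x$ is saturated in $H_2$; then $N_G(x)\subseteq V(H_2)$, so every neighbour of $x$ lying in $H_1$ already belongs to $H_2$. Feeding the local triangle/$K_4$ structure that $A_i$ forces around $x$ into $H_2$ then forces, in each subcase, one of three impossibilities: the degree of some vertex exceeds four, an induced claw appears, or one of the configurations $L_k$ with $k\ge 3$ or $J_t$ forbidden by Lemmas \ref{le1} and \ref{forbsub} is created. The same mechanism, with ``exactly one neighbour outside $H_2$'' replacing ``no neighbour outside,'' rules out $x$ being a degree-three vertex of $H_2$; by symmetry $x$ is then degree two in $H_1$ as well.

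Finally, with $x$ focal in both copies, I would show $V(H_1)\cap V(H_2)=\{x\}$: a second common vertex $y$, together with the rigid saturated cores of $A_i$ and $A_j$, would again force a forbidden subgraph or a vertex of degree exceeding four, so no such $y$ exists. Since $d_G(x)=4$ and $x$ contributes two neighbours to each side, its two $H_1$-neighbours and two $H_2$-neighbours must be four distinct vertices, so the two copies meet only at $x$ and their remaining vertices are disjoint. This is exactly the defining gluing of $A_i\circ A_j$ with $x$ as focal vertex, completing the proof. I expect the main obstacle to be the saturated/degree-three case analysis in the middle step: because a saturated vertex drags its entire neighbourhood into the subgraph, any non-peripheral overlap forces large portions of $H_1$ and $H_2$ to coincide, and one must verify that every resulting merged configuration genuinely appears on the forbidden list of Lemmas \ref{le1} and \ref{forbsub}.
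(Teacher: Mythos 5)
Your overall skeleton is the one the paper uses --- first pin $x$ down as a degree-two vertex of both copies, then show the copies share nothing else --- but your claimed list of possible contradictions is not exhaustive, and this is a genuine gap. You assert that any illegal overlap must produce a vertex of degree more than four, an induced claw, or one of the forbidden configurations $L_k$ ($k\ge3$) or $J_t$ of Lemmas \ref{le1} and \ref{forbsub}. In the hardest subcases none of these occurs: the overlap forces the configuration to close up, every vertex reaching degree four, so the merged configuration is not a proper subgraph at all --- it is all of $G$ --- and $G$ is then one of the small exceptional graphs $I_1,\dots,I_8$ of Lemma \ref{counterex}. Concretely, when the saturated triangles of two copies of $A_2$ overlap, the adjacencies forced by the configuration of $A_2$ and claw-freeness make the six vertices involved pairwise exhaust all their degrees, and the resulting graph is $I_2$ (the octahedron): a $4$-regular, claw-free graph on six vertices with no claw, no vertex of degree five, and too few vertices to contain any $J_t$ or any $L_3$. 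Your machinery produces no contradiction there. The paper escapes only by invoking Lemma \ref{counterex}, i.e.\ minimality of the counterexample together with the directly checked fact that these small graphs satisfy the bound (for instance $\gamma_p(I_2)=1\le\frac{6+1}{5}$). The same issue recurs with $I_3$, $I_4$ and $I_5$ in other subcases, both in your middle step locating $x$ and in your final step excluding a second common vertex $y$; without Lemma \ref{counterex} in the toolkit, the case analysis cannot be completed.

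Two smaller inaccuracies. First, $A_2$ and $A_3$ contain no degree-three vertices (every vertex of either is saturated or of degree two), so your ``degree-three'' case for $H_2$ is vacuous; this suggests the bookkeeping of the structures of $A_2$ and $A_3$ was not fixed precisely. Second, the roles of $H_1$ and $H_2$ are not symmetric, since $i$ may equal $1$: the graph $A_1$ (a $K_4$ plus a vertex of degree two) does have degree-three vertices and a genuinely different local structure, so ``by symmetry $x$ is degree two in $H_1$ as well'' does not follow; the case $H_1\cong A_1$ needs its own short argument, which the paper disposes of separately as the easy case before running the saturated-cycle analysis for $i,j\in\{2,3\}$.
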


\begin{proof}
If $H_1\cong A_1$ and $H_2\cong A_j$ for $j\in\{2,3\}$, then the statement is obvious.
Now we consider the case for $H_1\cong A_i$ and $H_2\cong A_j$, where $i,j\in\{2,3\}$. Let $C_1$ and $C_2$ be two saturated cycles of $H_1$ and $H_2$ respectively. We partition our proof into the following claims.

\noindent
{\bf Claim 1.} $|V(C_1)\cap V(C_2)|\le 1$.

\begin{proof}
 By the claw-freeness of $G$ and the configuration of $A_2$ and $A_3$, it is easy to check that $|V(C_1)\cap V(C_2)|\leq 2$. If the statement is false, then we suppose that $|V(C_1)\cap V(C_2)|=2$.

If $C_1:=v_1v_2v_3v_1$ and $C_2:=u_1v_2v_3u_1$ are both saturated triangles, then by the configuration of $A_2$, there are two vertices $w_1$ and $w_2$ in $G$ such that all $w_1v_1$, $w_1v_2$, $w_2v_1$ and $w_2v_3$ are in $E(G)$. Since $C_2$ is the saturated triangle and by the 4-regular of $G$, $w_1u_1, w_2u_1\in E(G)$. By the claw-freeness of $G$, $w_1w_2\in E(G)$. Thus, $G\cong I_2$. This contradicts Lemma \ref{counterex}.

If $C_1:=v_1v_2v_3v_1$ is a saturated triangle and $C_2:=v_1v_2u_1u_2v_1$ is a saturated quadrilateral, then by the configuration of $A_2$, $u_1v_3, u_2v_3\in E(G)$. But it contradicts the configuration of $A_3$.

Finally we consider $C_1$ and $C_2$ are two saturated quadrilaterals. Note that now $C_1$ and $C_2$ share two adjacent vertices. Let $C_1:=wvv_1w'w$ and $C_2:=wuu_1w'w$. By the configuration of $A_3$, we have $uv, u_1v_1\in E(G)$.
Then there are three vertices $w_1,w_2,w_3$ such that $w_1u$, $w_1u_1$, $w_2w$, $w_2w'$, $w_3v$, $w_3v_1$ are in $E(G)$, here $w_1=w_3$ is allowed.

Suppose first $w_1\ne w_3$.
Since $G$ is claw-free, $G[w_1,w_2,w_3]$ is either a clique or contains at most one edge. If $\{w_1, w_2, w_3\}$ induces a clique, then $G\cong I_5$. This contradicts Lemma \ref{counterex}. Now without loss of generality suppose $w_1w_2\in E(G)$. Since $G$ is claw-free and 4-regular, then $w_1$ and $w_2$ share a neighbor $w_4$. Now $G[V(H_1\cup H_{2})\cup \{w_4\}]\cong J_1$. This contradicts Lemma \ref{forbsub}.

Now we consider $\{w_1, w_2, w_3\}$ is an independent set. Now $G[V(H_1\cup H_{2})]\cong J_6$. This contradicts Lemma \ref{forbsub} as well.

If $w_1=w_3$, then $G[V(H_1\cup H_2)]\cong J_{10}$. A contradiction.
\end{proof}

\noindent
{\bf Claim 2.} $|V(C_1)\cap V(C_2)|=0$.

\begin{proof}
Otherwise, suppose that there is a vertex $w\in V(C_1\cap C_2)$. Then we consider the three cases as follows.

{\bf Case 1.} $C_1:=v_1v_2wv_1$ and $C_2:=u_1u_2wu_1$ are two saturated triangles.

By the configuration of  $A_2$, let $u_1v_1, u_2v_2\in E(G)$ and there is another vertex $w_1\in V(H_1)$ adjacent to $v_1, v_2$. If $w_1$ is also the common neighbor of $u_1$ and $u_2$, then $G\cong I_2$, a contradiction.

Otherwise, there is the vertex $w_2\in V(H_2)$ (other than $w$) adjacent to both $u_1$ and $u_2$. Then $G[V(H_1\cup H_2)]\cong J_5$, a contradiction.

{\bf Case 2.} $C_1:=u_1u_2wu_1$ is a saturated triangle and $C_2:=v_1v_3v_2wv_1$ is a saturated quadrilateral.

By the configuration of $A_2$ and $A_3$, without loss of generality say $u_1v_1, u_2v_2\in E(G)$ and there is a vertex $w_1(\ne v_3)$ adjacent to $u_1, u_2$. If $w_1$ is also the common neighbor of $v_1, v_3$ (or $v_2,v_3$), then $G[\{u_1, u_2, w\}]$ and $G[\{w_1, u_1, v_1\}]$ (or $G[\{w_1, u_2, v_2\}]$) are two saturated triangles with common vertex $u_1$(or $u_2$). By the discussion of Case 1 above, we have $\gamma_{p}(G)\le\frac{n+1}{5}$, a contradiction. Therefore there are three distinct vertices $w_1, w_2, w_3$ with all $w_1u_1$, $w_1u_2$, $w_2v_2$, $w_2v_3$, $w_3v_3$, $w_3v_1$ in $E(G)$. If $\{w_1, w_2, w_3\}$ induces a clique, then $G\cong I_3$. This contradicts Lemma \ref{counterex}.

Since $G$ is claw-free, $G[w_1,w_2,w_3]$ contains at most one edge. Suppose first that $w_1w_2$ or $w_1w_3$ is in $E(G)$. Then $w_1$ and $w_2$ (or $w_3$) share a common neighbor, say $w_4$. Hence $G[V(H_1\cup H_2)\cup\{w_4\}]$ contains a subgraph isomorphic to $J_3$. A contradiction. If $w_2w_3\in E(G)$, then $w_2$ and $w_3$ also share a common neighbor, say $w_5$. Now $G[V(H_1\cup H_2)\cup\{w_5\}]$ contains a subgraph isomorphic to $J_2$. A contradiction.

If $\{w_1, w_2, w_3\}$ is an independent set, then $G[V(H_1\cup H_2)]\cong J_7$. A contradiction.

 {\bf Case 3.} $C_1:=v_1v_3wv_2 v_1$ and $C_2:=u_1u_3wu_2u_1$ are two saturated quadrilaterals.

 Without loss of generality we can suppose that $u_2v_2, u_3v_3\in E(G)$. By the configuration of $A_3$, $v_1$ and $v_3$ share a neighbor $w_1$, $v_1$ and $v_2$ share a neighbor $w_2$. If $w_1$ is also the common neighbor of $u_1$ and $u_3$, then $G[\{w_1, u_3, v_3\}]$ is a saturated triangle of some $A_2$ intersecting  the saturated quadrilateral $G[\{v_1, v_2, v_3, w\}]$. By the discussion of Case 2 above, we are done. If $w_1$ is the common neighbor of $u_1$ and $u_2$, then there is another saturated quadrilateral $G[\{w_1, v_1, v_2, u_2\}]$ intersecting $G[\{v_1, v_2, v_3, w\}]$ at $v_1$ and $v_2$. The case has been done in Claim 1.

We suppose that there are two vertices $w_3, w_4$ with all of $w_3u_1, w_3u_3$, $w_4u_1, w_4u_2$ in $E(G)$. Base on the discussion above, we can suppose that all of $w_1$, $w_2,w_3$ and $w_4$ are distinct. By symmetry, if $w_1 w_2\in E(G)$, then $G[\{w_1, w_2, v_1\}]$ is a saturated triangle intersecting $G[\{v_1, v_2, v_3, w\}]$ at $v_1$. By the discussion of Case 2 above, we are done. If $w_3w_4\in E(G)$, the similar discussion is valid as well.

Now we consider the case $w_1w_2, w_3w_4\notin E(G)$. Then $G[V(H_1\cup H_2)]$ contains a subgraph isomorphic to $J_4$. A contradiction.
\end{proof}

\noindent
{\bf Claim 3.} $x\notin V(C_1\cup C_2)$.

\begin{proof}
Otherwise, without loss of generality say that $x\in V(C_1)$. By Claim 2, $C_1$ and $C_2$ are disjoint. First suppose that both $C_1$ and $C_2$ are saturated triangles. We can suppose that $C_1:=xv_1v_2x$, $C_2:=u_1u_2u_3u_1$ and $xu_1, xu_2\in E(G)$. Based on the configuration of $A_2$, all of $u_1v_1, u_2v_2$, $v_1u_3, v_2u_3$ are in $E(G)$. Then $G\cong I_2$. This contradicts Lemma \ref{counterex}.

Now suppose that $C_1:=xv_1v_2x$ is a saturated triangle and $C_2:=u_1u_2u_3u_4u_1$ is a saturated quadrilateral. Without loss of generality say $xu_1, xu_2\in E(G)$. Consider the configuration of $A_2$ and $A_3$, we have all of $u_1v_1, u_2v_2$ $u_3v_2$ and $u_4v_1$ in $E(G)$. But this contradicts that $v_1$ and $v_2$ share a neighbor other than $x$.

Finally suppose that $C_1:=v_1xv_2v_3v_1$ and $C_2:=u_1u_2u_3u_4u_1$ are two saturated quadrilaterals. Without loss of generality say $u_1x, u_2x\in E(G)$. Based on the configuration of $A_3$, all of $u_1v_1, u_2v_2$, $v_1u_3$ and $v_2u_4$ are in $E(G)$. Since $G$ is claw-free, $v_3u_3, v_3u_4\in E(G)$. Therefore, $G\cong I_4$, contradicting Lemma \ref{counterex}.
\end{proof}
From Claims 2 and 3, $x$ is exactly the focal vertex of $H_1\cup H_2$. This completes the proof.
\end{proof}




\begin{lemma}\label{le4}
Let $H$ isomorphic to $A_i$ for $i=\{2, 3\}$ be a subgraph of $G$, then $H$ is also an induced subgraph of $G$.
\end{lemma}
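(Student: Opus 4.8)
The plan is to argue by contradiction. Suppose some copy $H\cong A_i$ with $i\in\{2,3\}$ occurs in $G$ but is not induced, so there is an edge $e=ab$ of $G$ with $a,b\in V(H)$ and $ab\notin E(H)$. Write $C$ for the triangle ($i=2$) or quadrilateral ($i=3$) of $H$, and let $Y=V(H)\setminus V(C)$ be the remaining three (resp.\ four) vertices, each of which has degree $2$ in $A_i$. The first step is to localise $e$: every vertex of $C$ already has degree $4$ inside $A_i$, so since $G$ is $4$-regular it is saturated and all of its $G$-neighbours lie in $H$; consequently no chord can be incident with $V(C)$, and both ends $a,b$ must belong to $Y$.

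Next I would enumerate the possible chords inside $Y$ up to the symmetries of $A_i$. In $A_2$ the three vertices of $Y$ are pairwise symmetric and any two of them have a common neighbour in $C$, which leaves a single representative chord. In $A_3$ the four vertices of $Y$ sit ``cyclically'' along the edges of $C$, and a chord either joins two that share a vertex of $C$ (the consecutive case) or two that share none (the opposite case); these give two representatives. Thus the whole argument reduces to three concrete local pictures.

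The core step is to show that each representative chord forces a configuration already forbidden by the earlier lemmas, and here I would reuse the bookkeeping of Lemma~\ref{le3}. Adding the chord gives each of its ends $w\in Y$ three known neighbours (its two $A_i$-neighbours and the chord), so by $4$-regularity $w$ has exactly one further neighbour in $G$. Now $N_G(w)$ contains two vertices that are non-adjacent in $H$, so to avoid an induced claw the missing neighbour of $w$ must be adjacent to one of them; because the vertices of $C$ are already saturated, this severely restricts where that neighbour can lie. Tracking the small number of resulting coincidences (do the two forced neighbours agree? do they complete a new triangle on $C$?) exhibits, on $V(H)$ together with at most one or two forced common neighbours, a copy of one of the graphs $J_k$ of Lemma~\ref{forbsub}, or of some $L_k$ with $k\ge3$ (Lemma~\ref{le1}), or else forces $G\cong K_5$ or $G\cong I_k$, contradicting Lemma~\ref{counterex}. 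Any one of these contradicts the minimality of $G$, so no chord exists and $H$ is induced.

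I expect the main obstacle to be exactly this last step: carefully forcing the bounded number of extra common neighbours from claw-freeness and $4$-regularity, and then recognising the resulting subgraph as one of the already-excluded graphs, with the same branching into coincidence subcases (``$w_1=w_3$?'', ``$\{w_1,w_2,w_3\}$ independent?'') that already drives the proof of Lemma~\ref{le3}. The enumeration itself is short; it is the identification of each forced picture with a specific $J_k$, $L_k$, or $I_k$ that requires the care.
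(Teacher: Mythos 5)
Your skeleton matches the paper's proof exactly: argue by contradiction, observe that the cycle vertices of $A_i$ are saturated so any chord must join two of the degree-2 vertices, reduce by symmetry to one chord type in $A_2$ and two in $A_3$ (consecutive and diagonal), then use claw-freeness and 4-regularity to force common neighbours and land in a forbidden configuration. Everything you state along the way is accurate, including the localisation step and the observation that the two $A_i$-neighbours of a chord end together with the chord leave exactly one free edge whose placement is constrained by claw-freeness.

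The gap is in the step you yourself defer as ``the main obstacle'': the identifications are asserted, not performed, and the list of terminal contradictions you plan to use ($J_k$, $L_k$ with $k\ge 3$, $K_5$, $I_k$) is not the one that actually closes the $A_3$ cases. In the paper: the $A_2$ chord $w_1w_2$ forces a common neighbour $w_4$ and yields $J_5$; for a consecutive chord $v_1v_2$ in $A_3$, a new neighbour $w$ of $v_1$ is forced to satisfy $wv_2\in E(G)$, giving $J_7$, while the sub-case $v_1v_3\in E(G)$ produces a copy of $A_2$ on $\{u_1,u_2,u_3,v_1,v_2,v_3\}$ sharing \emph{five} vertices with $H$, and the sub-case $v_1v_4\in E(G)$ forces $G\cong I_4$; for a diagonal chord $v_1v_3$, the forced common neighbour $w$ produces a second $A_3$ whose saturated cycle $v_1u_2u_3v_3$ meets $C$ in two vertices. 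The five-vertex overlap and the two-vertex cycle overlap are killed not by any $J_k$, $L_k$, or $I_k$ directly, but by invoking the conclusion of Lemma~\ref{le3} (two copies of $A_i$ and $A_j$, $j\in\{2,3\}$, can meet only in a single focal vertex). That is the tool missing from your plan: ``reusing the bookkeeping of Lemma~\ref{le3}'' is not the same as citing its statement, and if you restrict yourself to the contradiction sources you list, you would have to re-derive the internal case analysis of Lemma~\ref{le3} (its Claims 1--3) inside your own proof. So the strategy is the right one, but the content of the proof --- exhibiting $J_5$, $J_7$, $I_4$, and the two illegal $A_i$-overlaps --- is exactly what remains undone, and the cleanest way to finish it is the citation you left out.
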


\begin{proof}
If $G$ contains a subgraph $H$ isomorphic to $A_2$, then let $C:=v_1v_2v_3v_1$ be the saturated cycle of $H$ and $w_1,w_2,w_3$ are three vertices of $H$ with $w_i$ is adjacent to both of $v_i$ and $v_{i+1}$, where $i\in\{1,2,3\}$ and the index of $v_i$ taken modulo $3$. If $w_1w_2\in E(G)$, then $w_1$ and $w_2$ share a neighbor other than $v_2$, say $w_4$. But now $G[V(H)\cup\{w_4\}]$ contains a subgraph isomorphic to $J_5$, a contradiction.

We now consider the case for $H$ isomorphic to $A_3$. Let $C:=u_1u_2u_3u_4u_1$ be the saturated cycle of $H$. For $i=1,2,3,4$, let $v_i$ be the vertex of degree $2$ adjacent to both of $u_i$ and $u_{i+1}$ in $H$, here the index of $v_i$ are taken modulo $4$.

First suppose that there is an edge $v_iv_{i+1}\in E(G)$ for some $i\in\{1,2,3,4\}$, without loss of generality say $v_1v_2$.

If $v_1$ is adjacent to another vertex $w$ different from $v_3$ and $v_4$, then by the claw-freeness of $G$ and since $G$ is 4-regular, $wv_2\in E(H)$. Therefore $G[V(H)\cup\{w\}]$ contains a subgraph isomorphic to $J_7$, a contradiction. Similarly if $v_1$ is adjacent to $v_4$, we can obtain a contradiction as well. If $v_1$ is adjacent to $v_3$, then $v_2v_3\in E(H)$. Now $\{u_1,u_2,u_3,v_1,v_2,v_3\}$ induces a subgraph $H_1$ isomorphic to $A_2$. Note that the subgraph and $H$ share five common vertices. But this contradicts Lemma \ref{le3}. If $v_1v_4\in E(G)$, then since $G$ is claw-free, $v_3v_4, v_2v_3\in E(H)$. Hence $G\cong I_4$. This contradicts Lemma \ref{counterex} as well. It is similar to the case for $v_1v_4$ and $v_2v_3$ are edges of $H$.

Now none of $v_iv_{i+1}$ is an edge of $G$. If $v_1v_3\in E(G)$, then $v_1$ and $v_3$ share a neighbor, say $w$. Hence $\{v_1,u_2,u_3,v_3,u_1,v_2,u_4,w\}$ induces an $A_3$ with saturated cycle $C':v_1u_2u_3v_3v_1$. The cycle $C'$ intersects $C$ at two vertices. This contradicts Lemma \ref{le3}. Similarly, we have $v_2v_4\notin E(G)$. This means that $H$ is an induced subgraph of $G$.

This completes the proof.
\end{proof}

\begin{center}
\setlength{\unitlength}{0.25mm}
\begin{picture}(345,124)
\put(292,104){\circle*{6}}
\put(292,50){\circle*{6}}
\qbezier(292,104)(292,77)(292,50)
\put(345,104){\circle*{6}}
\qbezier(292,104)(318,104)(345,104)
\put(345,50){\circle*{6}}
\qbezier(292,50)(318,50)(345,50)
\qbezier(345,104)(345,77)(345,50)
\qbezier(292,104)(318,77)(345,50)
\qbezier(345,104)(318,77)(292,50)
\put(0,76){\circle*{6}}
\put(27,103){\circle*{6}}
\put(27,49){\circle*{6}}
\put(80,103){\circle*{6}}
\put(80,49){\circle*{6}}
\qbezier(27,103)(27,76)(27,49)
\qbezier(27,103)(53,103)(80,103)
\qbezier(27,49)(53,49)(80,49)
\qbezier(80,103)(80,76)(80,49)
\qbezier(27,103)(53,76)(80,49)
\qbezier(80,103)(53,76)(27,49)
\qbezier(0,76)(13,90)(27,103)
\qbezier(0,76)(13,63)(27,49)
\put(110,73){\circle*{6}}
\qbezier(80,103)(95,88)(110,73)
\qbezier(110,73)(95,61)(80,49)
\put(194,104){\circle*{6}}
\put(194,50){\circle*{6}}
\put(247,104){\circle*{6}}
\put(247,50){\circle*{6}}
\qbezier(194,104)(194,77)(194,50)
\qbezier(194,104)(220,104)(247,104)
\qbezier(194,50)(220,50)(247,50)
\qbezier(247,104)(247,77)(247,50)
\qbezier(194,104)(220,77)(247,50)
\qbezier(247,104)(220,77)(194,50)
\put(167,77){\circle*{6}}
\qbezier(167,77)(180,91)(194,104)
\put(167,77){\circle*{6}}
\qbezier(167,77)(180,64)(194,50)
\qbezier(247,104)(269,104)(292,104)
\qbezier(247,50)(269,50)(292,50)
\put(43,28){$B_3$}
\put(260,28){$B_4$}
\put(85,-9){Figure 6. $B_3$ and $B_4$.}
\end{picture}
\end{center}

Let $B_3$ be the graph obtained from a $K_5$ by splitting its one vertex into two vertices of degree two. Let $B_4$ be the graph obtained from a $L_2$ by adding a new vertex and linking the vertex to two adjacent vertices of degree three in $L_2$, see Figure 6.
The following statement is easy to check and we omit its detailed proof.
\begin{lemma}\label{le2}
Let $H_1$ and $H_2$ be two subgraphs of $G$ sharing at least one vertex.

(i) If $H_1\cong A_1$, $H_2\cong A_1$, then $H_1\cup H_2\cong A_1\circ A_1$, $B_3$ or $K_5-e$.

(ii) If $H_1\cong A_1$, $H_2\cong L_2$, then $H_1\cup H_2\cong B_4$.
\end{lemma}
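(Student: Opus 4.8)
The plan is to reduce both parts to short degree-count arguments built on one structural fact about $4$-regular graphs: \emph{every $K_4$ contained in $G$ has each of its four vertices joined to exactly one vertex outside the clique} (call it that vertex's \emph{port}), because a $K_4$-vertex already spends three of its four edges inside the clique. From this I would first record the auxiliary observation that \emph{two copies of $K_4$ in $G$ can meet in only $0$, $3$, or $4$ vertices}: a single common vertex would acquire $3+3=6$ neighbours, and a common edge $\{u,v\}$ would give $u$ the $1+2+2=5$ neighbours, both impossible. I would also note once and for all that $G$ has no $K_5$ subgraph, since the five vertices of such a $K_5$ would each already have degree $4$, making it a whole component and forcing $G\cong K_5$, contrary to Lemma~\ref{counterex}. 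Throughout I assume $H_1\neq H_2$, as otherwise the union is a single $A_1$.

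For part (i) I would write $H_1=Q_1+p_1$ and $H_2=Q_2+p_2$, where $Q_i$ is the unique $K_4$ of $H_i$ and $p_i$ its degree-$2$ apex; the crucial point is that each apex is the common port of the two $Q_i$-vertices it meets. Splitting on $t:=|V(Q_1)\cap V(Q_2)|\in\{0,3,4\}$: when $t=4$ (so $Q_1=Q_2=:Q$), if $p_1\neq p_2$ the two apexes, being ports, must sit on disjoint edges of $Q$, giving $B_3$, while if $p_1=p_2$ the apex meets exactly three vertices of $Q$ (four is excluded by the no-$K_5$ fact) and the union is $K_5-e$. When $t=3$, the three shared vertices become saturated, which forces the two private clique-vertices to be non-adjacent (else a $K_5$ appears) and, by a port count, identifies each apex with the other copy's private vertex; hence $H_1\cup H_2\cong K_5-e$ on exactly five vertices. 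When $t=0$ the shared vertex cannot lie in either $Q_i$ (a $Q_2$-vertex that were also the apex $p_1$ would have degree $3+2=5$), so it is $p_1=p_2$, which is then saturated with two neighbours in each of the two disjoint $K_4$'s, yielding $A_1\circ A_1$. The cases $t=1,2$ are already dismissed by the auxiliary observation.

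For part (ii) I would present $H_2\cong L_2$ as two cliques $D_1,D_2$ joined by two independent edges, with $Q_1,p_1$ the clique and apex of $H_1$. Since $Q_1,D_1,D_2$ are pairwise $K_4$'s, the auxiliary observation gives $|V(Q_1)\cap V(D_j)|\in\{0,3,4\}$, and as $Q_1$ has only four vertices the admissible patterns are $(|Q_1\cap D_1|,|Q_1\cap D_2|)\in\{(4,0),(0,4),(0,0)\}$. The pattern $(0,0)$ fails because the shared vertex must then be $p_1\in V(L_2)$, but a vertex that is at once a $K_4$-vertex of $L_2$ and an apex of $H_1$ has degree $\ge 3+2=5$. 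The pattern $(3,0)$ (and symmetrically $(0,3)$) fails because the three shared vertices would become saturated with a neighbour outside $V(L_2)$, leaving $D_1$ with at most one vertex free to meet $D_2$ and so contradicting that two independent edges join $D_1$ and $D_2$. Hence $Q_1$ equals one of $D_1,D_2$, say $D_1$; a port count then shows $p_1$ cannot attach to the two $D_2$-connected vertices of $D_1$, so it attaches to the two adjacent degree-$3$ vertices, which is exactly the construction of $B_4$.

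I expect the only real care to lie in the completeness and bookkeeping of the case split rather than in any single deduction. In particular, the delicate point in part (i) is verifying in the $t=3$ case that the apexes are \emph{forced} onto the already-present private vertices, so that the union is exactly $K_5-e$ and not a strictly larger graph; and in part (ii) the point is confirming that the excluded pattern $(3,0)$ genuinely clashes with the two independent edges between $D_1$ and $D_2$. Both of these, like every other step, reduce to an immediate degree count once the no-$K_5$ fact and the port principle are in hand, which is why the statement can fairly be called routine.
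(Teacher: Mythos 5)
Your proof is correct, and there is in fact nothing in the paper to compare it against: the authors state Lemma~\ref{le2} with the remark that it ``is easy to check'' and omit the proof entirely, so your write-up supplies exactly the missing verification. Your two ingredients --- the port principle (in a $4$-regular graph every vertex of a $K_4$ has exactly one neighbour outside it) and the consequence that two distinct $K_4$'s meet in $0$ or $3$ vertices or coincide, together with the exclusion of $K_5$ via Lemma~\ref{counterex} --- are surely what the authors had in mind, and your case analysis on $t=|V(Q_1)\cap V(Q_2)|$ in part~(i), and on the intersection pattern with $D_1,D_2$ in part~(ii), is exhaustive with each case settled by a sound degree count. I checked the delicate spots you flagged: in the $t=3$ case each apex is indeed forced to be the other clique's private vertex (any pair in $Q_1$ meets $\{x,y,z\}$, and the unique neighbour of such a vertex outside $Q_1$ is $u_2$), so the union is exactly $K_5-e$; and in part~(ii) the pattern $(3,0)$ really does saturate three vertices of $D_1$, leaving only one vertex available for the two independent edges to $D_2$. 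It is also worth noting that your argument never uses claw-freeness, only $4$-regularity and $G\not\cong K_5$, so it proves slightly more than is needed.

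One editorial slip to repair: in part~(ii) you write that the admissible patterns are $(|Q_1\cap D_1|,|Q_1\cap D_2|)\in\{(4,0),(0,4),(0,0)\}$, but the size constraint alone also permits $(3,0)$ and $(0,3)$. This is harmless --- you rule out precisely those two patterns in the very next sentence --- but as written the list contradicts the sentence that follows it. State all five patterns $(0,0),(3,0),(0,3),(4,0),(0,4)$ up front and then eliminate the first three; the proof is then complete as it stands.
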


Note that $G\ncong I_7$. If $G$ contains a induced subgraph $H$ isomorphic to $A_1\circ A_1$, then there is exactly one edge linking the two copies of $K_4$ of $H$.

\begin{lemma}\label{le61}
If $G$ contains a subgraph $H$ isomorphic to $B_3$, then $G$ contains a subgraph $H'$ isomorphic to $B_1$ or $B_2$ (see Figure 4) such that $H\subset H'$.
\end{lemma}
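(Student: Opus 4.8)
The plan is to read off all the local structure forced around the given $B_3$ and then split into cases according to how its two degree‑two vertices interact. Write the $K_4$ of $H\cong B_3$ as $\{c_1,c_2,c_3,c_4\}$ and let $a,b$ be the two degree‑two vertices of $H$, with $ac_1,ac_2,bc_3,bc_4\in E(G)$. Each $c_i$ already has degree four inside $H$, so the $c_i$ are saturated and hence have no neighbours outside $H$. Since $G$ is $4$‑regular this forces $N_G(a)=\{c_1,c_2,p,q\}$ and $N_G(b)=\{c_3,c_4,r,s\}$ with $p,q,r,s\notin\{c_1,c_2,c_3,c_4\}$. Because $c_1c_2\in E(G)$ while the saturated vertices $c_1,c_2$ are non‑adjacent to $p,q$, claw‑freeness at $a$ (applied to the triple $\{c_1,p,q\}$) forces $pq\in E(G)$; symmetrically $rs\in E(G)$. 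Thus $a$ and $b$ each sit in a triangle ($apq$ and $brs$) hanging off the saturated $K_4$.

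First I would treat the case $ab\in E(G)$. Then $b\in N_G(a)$, say $b=p$ (it cannot be a $c_i$, since the $c_i$ are saturated with neighbourhoods inside $H$). From $pq\in E(G)$ we get $bq\in E(G)$, and running the same bookkeeping on $N_G(b)$ shows its fourth neighbour is exactly $q$, so $q\in N_G(a)\cap N_G(b)$. The vertex set $\{c_1,c_2,c_3,c_4,a,b,q\}$ with the edges of $H$ together with $ab,aq,bq$ is then isomorphic to $B_2$ (with $a,b$ the two connectors and $q$ the degree‑two vertex), and it contains $H$; this settles the case.

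Next I would treat $ab\notin E(G)$ under the assumption that $a$ and $b$ have a common neighbour $w$; since the $c_i$ are saturated, $w$ must lie in $\{p,q\}\cap\{r,s\}$, say $w=p=r$. Then $pq\in E(G)$ and $rs\in E(G)$ give $wq,ws\in E(G)$, and taking $u=q$, $v=s$ the nine vertices $\{c_1,c_2,c_3,c_4,a,b,w,q,s\}$ form a copy of $B_1$ containing $H$, provided $q\neq s$. The only obstruction is the degenerate possibility $q=s$, i.e.\ $a,b$ sharing two (adjacent) neighbours $w,q$; here I would eliminate it outright by claw‑freeness at $w$: the fourth neighbour $w'$ of $w$ would have to be adjacent to $a$ or to $b$ to avoid the claw on $\{a,b,w'\}$, but $N_G(a)$ and $N_G(b)$ are already full and contain no room for $w'$ without contradicting the saturation of the $c_i$. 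So this degenerate subcase is impossible and the common‑neighbour case always yields $B_1$.

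The remaining case, $ab\notin E(G)$ with $a$ and $b$ having no common neighbour, is the genuine obstacle, and I expect it to be the hard part. Locally this configuration is a perfectly legal $4$‑regular claw‑free arrangement, so pure claw‑freeness cannot kill it; the exclusion must come from the minimality of $G$. The plan is to show that the ten distinct vertices $c_1,c_2,c_3,c_4,a,b,p,q,r,s$, together with the extra triangles forced by re‑applying the claw‑free argument at $p,q,r,s$ (each such vertex again lies in a triangle on its two private neighbours), contain one of the configurations already forbidden in $G$ — a subgraph $J_i$ barred by Lemma~\ref{forbsub}, a copy of $L_3$ barred by Lemma~\ref{le1}, or one of the graphs $I_i$ excluded by Lemma~\ref{counterex}. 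Establishing precisely which forbidden subgraph appears, and checking the incidences carefully, is where the real work lies; once that contradiction is in hand, only the cases producing $B_1$ or $B_2$ survive, completing the proof.
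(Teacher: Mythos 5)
Your first two cases are correct and coincide with the paper's own argument: when the two degree-two vertices $a,b$ of $H$ are adjacent, claw-freeness at $a$ (using that the $K_4$-vertices are saturated) forces a common neighbour of $a$ and $b$ and yields $B_2$; when they are non-adjacent but have a common neighbour $w$, the degenerate two-common-neighbours possibility is excluded by exactly the claw you exhibit at $w$, and the surviving configuration is $B_1$. These are the same reductions the paper performs.

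The gap is your third case ($ab\notin E(G)$ and no common neighbour), which you explicitly leave open. You correctly assemble the ten distinct vertices $c_1,c_2,c_3,c_4,a,b,p,q,r,s$ (distinctness follows from $ab\notin E(G)$, the absence of a common neighbour, and the saturation of the $c_i$), but you stop short of the decisive observation: the graph on $V(H)\cup\{p,q,r,s\}$ with edge set $E(H)\cup\{ap,aq,br,bs\}$ is precisely $J_{12}$ of Figure 4, and Lemma~\ref{forbsub}(i) forbids $J_{12}$ as a (not necessarily induced) subgraph of the minimal counterexample $G$. So this case is closed in one line; no auxiliary triangles at $p,q,r,s$, no appeal to $L_3$ (Lemma~\ref{le1}) or to the graphs $I_i$ (Lemma~\ref{counterex}) is needed, and the edges $pq$, $rs$ you forced by claw-freeness are irrelevant, since only a subgraph copy of $J_{12}$ is required. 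Your instinct that the contradiction must come from minimality is sound --- Lemma~\ref{forbsub} is itself proved from minimality via Operation 8 --- but you substantially overestimate the remaining work, and as written the proof is incomplete because this identification, which is the whole content of the paper's treatment of that case, is missing.
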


\begin{proof}
Let $u_1$ and $u_2$ be the two vertices of degree 2 in $H$. By the claw-freeness of $G$, if $u_1u_2\in E(G)$, then $u_1$ and $u_2$ share a common neighbor, say $w$, in $G$. Since $G$ is 4-regular and claw-free, $V(H)\cup\{w\}$ induces a $B_2$.

Now suppose that $u_1u_2\notin E(G)$. If $u_1$ and $u_2$ have no common vertex, then $G[V(H)\cup N(u_1)\cup N(u_2)]$ contains a subgraph isomorphic to $J_{12}$. A contradiction.

Suppose that $u_1$ and $u_2$ have at least one common neighbor. Since $G$ is claw-free and 4-regular, $u_1$ and $u_2$ share exactly one common vertex, say $w$. (If $w'$ is the common neighbor of $u_1$ and $u_2$ other than $w$, then $\{u_1,u_2,w\}$ together with the neighbor of $w$ (other than $w'$) induces a claw). Let $w_1$ and $w_2$ be the fourth neighbors of $u_1$ and $u_2$, respectively. Note that $w_1w, w_2w\in E(G)$. The graph induced by $V(H)\cup \{w, w_1,w_2\}$ is isomorphic to $B_1$.

This completes the proof.
\end{proof}

\begin{lemma}\label{le62}
If $G$ contains a subgraph $H$ isomorphic to $L_2$, then $G$ contains a subgraph $H'$ isomorphic to $B_4$ such that $H\subset H'$.
\end{lemma}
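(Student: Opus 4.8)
The plan is to locate an adjacent pair of degree-$3$ vertices of $H$ that has a common neighbour outside $H$, and then invoke Lemma~\ref{le2}(ii) to assemble the desired $B_4$. Write $D_1,D_2$ for the two copies of $K_4$ in $H\cong L_2$, and let $a_1,a_2\in V(D_1)$ and $b_1,b_2\in V(D_2)$ be the four vertices of degree $3$ in $H$; since they lie in the $K_4$'s we have $a_1a_2,b_1b_2\in E(G)$. Let $c_1,c_2$ be the two saturated vertices of $D_1$ (and similarly $d_1,d_2$ in $D_2$). Because the degree-$4$ vertices of $H$ are saturated, each of $a_1,a_2,b_1,b_2$ has exactly one neighbour in $G$ outside its $H$-neighbourhood; call these external neighbours $x_1,x_2$ (of $a_1,a_2$) and $y_1,y_2$ (of $b_1,b_2$).

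First I would record the reduction. If $a_1$ and $a_2$ have a common external neighbour $w$, then $V(D_1)\cup\{w\}$ carries a subgraph isomorphic to $A_1$ (namely $D_1\cong K_4$ together with $w$ joined to the adjacent pair $a_1,a_2$); this $A_1$ shares the four vertices of $D_1$ with $H\cong L_2$, so Lemma~\ref{le2}(ii) gives $\bigl(V(D_1)\cup\{w\}\bigr)\cup H\cong B_4=:H'$ with $H\subset H'$, and the symmetric statement holds for $b_1,b_2$. Moreover such a $w$ is automatically outside $V(H)$: a vertex of $H$ adjacent to both $a_1$ and $a_2$ could only be a degree-$3$ vertex of $D_2$, but that vertex already has its single external slot occupied and cannot be adjacent to both of $a_1,a_2$. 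Hence it suffices to prove that $x_1=x_2$, or $y_1=y_2$, or that one external neighbour of a pair is adjacent to the other member of that pair.

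So I would assume the contrary for both pairs and seek a contradiction. Suppose $x_1\neq x_2$, $x_1\not\sim a_2$ and $x_2\not\sim a_1$. Then $a_1$ is non-adjacent to every other neighbour of $x_1$: indeed $N_G(a_1)=\{a_2,c_1,c_2,x_1\}$, and $x_1\not\sim a_2$ by assumption while $x_1\not\sim c_1,c_2$ since $c_1,c_2$ are saturated with neighbourhoods contained in $V(D_1)\cup\{d_1,d_2\}$. Claw-freeness at $x_1$ then forces the remaining three neighbours of $x_1$ to be pairwise adjacent, so $x_1$ lies in a copy $D_1^{*}\cong K_4$ joined to $D_1$ only through the edge $a_1x_1$; the same argument places $x_2$ in a copy $D_2^{*}\cong K_4$. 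If $x_2\in V(D_1^{*})$ (in particular if $D_1^{*}=D_2^{*}$), then $a_1x_1$ and $a_2x_2$ are two independent edges joining $D_1$ to $D_1^{*}$, so $D_1^{*},D_1,D_2$ contain an $L_3$, contradicting Lemma~\ref{le1}.

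The remaining configurations are where I expect the real work to lie. When an external neighbour falls inside $H$ (say $x_1=b_1$, which forces a third edge between $D_1$ and $D_2$ and hence two $K_4$'s joined by three edges), or when $D_1^{*}$ and $D_2^{*}$ are genuinely distinct and disjoint, I anticipate that the induced structure, after also importing the parallel information from the $b$-side, must contain one of the forbidden subgraphs $J_i$ of Lemma~\ref{forbsub} or one of $I_1,\dots,I_8$ of Lemma~\ref{counterex}. The main obstacle is therefore the bookkeeping: identifying exactly which forbidden graph appears in each sub-configuration and clearing the boundary cases in which the external neighbours coincide with vertices of $H$. This step mirrors the closing arguments of Lemmas~\ref{le3} and~\ref{le61}, where claw-freeness together with $4$-regularity repeatedly pins a stray vertex into a $K_4$ and the resulting union is matched against the catalogue of forbidden subgraphs.
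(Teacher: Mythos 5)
Your skeleton is the same as the paper's: assume no adjacent degree-$3$ pair of $H$ has a common neighbour, use claw-freeness to force each external neighbour into a $K_4$ hanging off $H$ by a single edge, and derive an $L_3$ (contradicting Lemma~\ref{le1}) when the two $K_4$'s on one side meet. That much is correct. But the proof stops exactly where the substance starts: the configurations you defer as ``bookkeeping'' are the content of the lemma, and they are the very reason the catalogue of Lemma~\ref{forbsub} contains $J_8$, $J_9$, $J_{13}$ and $J_{14}$. Concretely, three things are missing. First, inducedness of $H$: the paper disposes of this at the outset (two edges between $\{a_1,a_2\}$ and $\{b_1,b_2\}$ force $G\cong I_6$, exactly one edge forces the forbidden subgraph $J_8$), and it must come first, because your reduction step (the union with $w$ being a $B_4$) and your construction of $D_1^*$, $D_2^*$ both presuppose that every degree-$3$ vertex of $H$ has its fourth neighbour outside $V(H)$. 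Second, cross edges: if an external neighbour on the $D_1$-side is adjacent to one on the $D_2$-side, say $x_1\sim y_1$, claw-freeness yields a $K_4$ on $x_1,y_1$ and two common neighbours $p,q$, and one must then verify that $G$ contains $I_8$, $J_9$ or $J_{13}$ according to whether $\{x_2,y_2\}$ meets $\{p,q\}$; nothing in your outline identifies these graphs. Third, the independent case: when the four external neighbours are pairwise nonadjacent, the paper concludes $G[V(H)\cup\{x_1,x_2,y_1,y_2\}]\cong J_{14}$; since Lemma~\ref{forbsub} forbids $J_{14}$ only as an \emph{induced} subgraph, this step is legitimate only after all the preceding cases have been eliminated, so the order of elimination is itself part of the proof, not an afterthought.

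There is also a gap in the set-up of your case analysis: you never show the four external neighbours are pairwise distinct. The contrary assumption gives $x_1\neq x_2$ and $y_1\neq y_2$, but coincidences across the two sides, e.g.\ $x_1=y_1$ (a common neighbour of the nonadjacent degree-$3$ vertices $a_1$ and $b_1$), are not excluded by anything you wrote, and your trichotomy ``$x_2\in V(D_1^*)$ / the attached $K_4$'s are distinct and disjoint / an external neighbour lies in $H$'' is not exhaustive without that. The paper handles this with a separate observation: since $H$ is induced, $G$ is claw-free and $G\ncong I_7$ (Lemma~\ref{counterex}), nonadjacent degree-$3$ vertices of $H$ have no common neighbour. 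In short, your approach is the right one and the first steps are correctly executed, but what you call bookkeeping is the proof, and it is absent.
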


\begin{proof}
First show that $H$ is an induced subgraph of $G$. Let $V(D_1)=\{u_1,u_2,u_3,u_4\}$, $V(D_2)=\{v_1,v_2,v_3,v_4\}$ and $u_1v_1,u_2v_2$ be the two edges linked $D_1$ to $D_2$. If there are two edges linking $\{v_3,v_4\}$ to $\{u_3,u_4\}$, then $G\cong I_6$. This contradicts Lemma \ref{counterex}. If there is exactly one edge linking $\{v_3,v_4\}$ to $\{u_3,u_4\}$, say $u_3v_3\in E(G)$, then $G[V(H)]\cong J_8$, a contradiction.

Since $G$ is claw-free, $H$ is an induced subgraph of $G$ and $G$ is not isomorphic to $I_7$, any two nonadjacent vertices of degree three in $H$ have no common neighbor.

Suppose that statement fails, that is, $u_3$ and $u_4$ ($v_3$ and $v_4$ respectively) have no common neighbor outside $H$. Let $w_1, w_2, w_3$ and $w_4$ are the four neighbors of $u_3, u_4, v_3$ and $v_4$ outside $H$, respectively. If $w_1w_2\in E(G)$ (or $w_3w_4\in E(G)$), then $w_1$ and $w_2$ (or $w_3$ and $w_4$) share two common neighbors outside $D_1$ (or $D_2$). Then $G$ contains a subgraph isomorphic to $L_3$, contradicting Lemma \ref{le1}. So suppose that neither $w_1w_2$ nor $w_3w_4$ is in $E(G)$.

If one of $w_1w_3$, $w_1w_4$, $w_2w_3$ and $w_3w_4$, say $w_1w_3$, is in $E(G)$, then $w_1$ and $w_3$ have two common neighbors $w'_1$ and $w'_3$. $\{w_1,w_3,w'_1,w'_3\}$ induces a $K_4$, say $D_3$. If $\{w_2,w_4\}\cap\{w'_1,w'_3\}\ne\emptyset$, then either $G\cong I_8$ or $G[V(H)\cup\{w_1,w_3,w'_1,w'_3\}]$ is isomorphic to $J_9$. If $\{w_2,w_4\}\cap\{w'_1,w'_3\}=\emptyset$, then $G[V(H)\cup\{w_1,w_2,w_3,w_4,w'_1,w'_3\}]$ is isomorphic to $J_{13}$. For both cases, we get a contradiction.

Now suppose that $W=\{w_1,w_2,w_3,w_4\}$ is an independent set.
Then $G[V(H)\cup W]$ is isomorphic to $J_{14}$. This contradicts Lemma \ref{forbsub}.
\end{proof}

\subsection{The proof of Theorem \ref{mainth}}                       

In this section, we present a proof of our main result, namely, Theorem \ref{mainth}.

 Let $T_1$ and $T_2$ be the graphs shown in Figure 7, $F_1$ and $F_2$ be the graphs shown in Figure 8. 
 For $i\in\{1,2\}$ and $j\in\{1,2,3\}$, let $T_i\circ A_j$ be the graph obtained from a $T_i$ and an $A_j$ by identifying a vertex of degree 2 in $T_i$ and a vertex of degree 2 in $A_j$. Since $G$ is a counterexample, it is easy to check that $G$ is not isomorphic to $T_1\circ T_1$.

\begin{center}
\setlength{\unitlength}{0.16mm}
\begin{picture}(413,222)
\put(251,139){\circle*{6}}
\put(251,78){\circle*{6}}
\put(379,139){\circle*{6}}
\put(379,78){\circle*{6}}
\put(316,140){\circle*{6}}
\put(251,107){\circle*{6}}
\put(318,77){\circle*{10}}\put(312,55){$u$}
\put(379,108){\circle*{6}}
\qbezier(251,139)(251,109)(251,78)
\qbezier(251,139)(315,139)(379,139)
\qbezier(251,78)(315,78)(379,78)
\qbezier(379,139)(379,109)(379,78)
\qbezier(316,140)(283,124)(251,107)
\qbezier(251,107)(284,92)(318,77)
\qbezier(316,140)(347,124)(379,108)
\qbezier(379,108)(348,93)(318,77)
\put(100,62){\circle*{6}}
\put(35,131){\circle*{6}}
\qbezier(100,62)(67,97)(35,131)
\put(165,130){\circle*{6}}
\qbezier(100,62)(132,96)(165,130)
\qbezier(35,131)(100,131)(165,130)
\put(70,95){\circle*{10}}\put(50,85){$u$}
\put(131,95){\circle*{6}}
\qbezier(70,95)(100,95)(131,95)
\put(102,131){\circle*{6}}
\qbezier(70,95)(86,113)(102,131)
\qbezier(131,95)(116,113)(102,131)
\put(62,208){\circle*{6}}
\put(62,162){\circle*{6}}
\put(142,208){\circle*{6}}
\put(142,162){\circle*{10}}\put(132,142){$v$}
\put(102,185){\circle*{6}}
\put(164,130){\circle*{6}}
\put(36,131){\circle*{6}}
\qbezier(62,208)(62,185)(62,162)
\qbezier(62,208)(102,208)(142,208)
\qbezier(62,162)(102,162)(142,162)
\qbezier(142,208)(142,185)(142,162)
\qbezier(62,208)(102,185)(142,162)
\qbezier(142,208)(102,185)(62,162)
\qbezier(164,130)(153,169)(142,208)
\qbezier(164,130)(153,146)(142,162)
\qbezier(62,208)(49,170)(36,131)
\qbezier(36,131)(49,147)(62,162)
\put(286,204){\circle*{6}}
\put(286,168){\circle*{6}}
\put(350,204){\circle*{6}}
\put(350,168){\circle*{10}}\put(340,148){$v$}
\put(318,186){\circle*{6}}
\put(379,139){\circle*{6}}
\put(251,139){\circle*{6}}
\qbezier(286,204)(286,186)(286,168)
\qbezier(286,204)(318,204)(350,204)
\qbezier(286,168)(318,168)(350,168)
\qbezier(350,204)(350,186)(350,168)
\qbezier(286,204)(318,186)(350,168)
\qbezier(350,204)(318,186)(286,168)
\qbezier(379,139)(364,172)(350,204)
\qbezier(379,139)(364,154)(350,168)
\qbezier(286,204)(268,172)(251,139)
\qbezier(251,139)(268,154)(286,168)
\put(85,29){$T_1$}
\put(303,26){$T_2$}
\put(100,-9){Figure 7. $T_1$ and $T_2$}
\end{picture}
\end{center}
\vskip .5em
\begin{center}
\setlength{\unitlength}{0.18mm}
\begin{picture}(700,185)
\put(124,170){\circle*{6}}
\put(124,94){\circle*{6}}
\qbezier(124,170)(124,132)(124,94)
\put(176,170){\circle*{6}}
\qbezier(124,170)(150,170)(176,170)
\put(176,94){\circle*{6}}
\qbezier(124,94)(150,94)(176,94)
\qbezier(176,170)(176,132)(176,94)
\qbezier(124,170)(150,132)(176,94)
\qbezier(176,170)(150,132)(124,94)
\put(218,130){\circle*{10}}\put(214,140){$u$}
\qbezier(218,130)(197,150)(176,170)
\put(633,180){\circle*{6}}
\put(585,132){\circle*{10}}\put(581,142){$u$}
\put(684,130){\circle*{6}}
\put(636,82){\circle*{6}}
\put(661,154){\circle*{6}}
\put(609,154){\circle*{6}}
\put(609,106){\circle*{6}}
\put(661,106){\circle*{6}}
\qbezier(633,180)(609,156)(585,132)
\qbezier(633,180)(658,155)(684,130)
\qbezier(585,132)(610,107)(636,82)
\qbezier(684,130)(660,106)(636,82)
\qbezier(661,154)(635,154)(609,154)
\qbezier(609,154)(609,130)(609,106)
\qbezier(661,154)(661,130)(661,106)
\qbezier(661,106)(635,106)(609,106)
\put(307,185){\circle*{6}}
\put(307,74){\circle*{6}}
\qbezier(307,185)(307,130)(307,74)
\put(218,129){\circle*{6}}
\qbezier(307,185)(262,157)(218,129)
\qbezier(307,74)(262,102)(218,129)
\qbezier(176,94)(197,112)(218,130)
\put(262,154){\circle*{6}}
\put(307,128){\circle*{6}}
\qbezier(262,154)(284,141)(307,128)
\put(262,105){\circle*{6}}
\qbezier(262,154)(262,130)(262,105)
\qbezier(262,105)(284,117)(307,128)
\put(69,170){\circle*{10}}\put(66,180){$v$}
\qbezier(124,170)(96,170)(69,170)
\put(69,94){\circle*{6}}
\qbezier(69,170)(69,132)(69,94)
\qbezier(69,94)(96,94)(124,94)
\put(13,170){\circle*{6}}
\put(13,94){\circle*{6}}
\put(69,170){\circle*{6}}
\put(69,94){\circle*{6}}
\qbezier(13,170)(13,132)(13,94)
\qbezier(13,170)(41,170)(69,170)
\qbezier(13,94)(41,94)(69,94)
\qbezier(13,170)(41,132)(69,94)
\qbezier(69,170)(41,132)(13,94)
\put(491,172){\circle*{6}}
\put(491,96){\circle*{6}}
\put(543,172){\circle*{6}}
\put(543,96){\circle*{6}}
\put(436,172){\circle*{6}}
\put(436,96){\circle*{6}}
\put(380,172){\circle*{6}}
\put(380,96){\circle*{6}}
\put(436,172){\circle*{10}}\put(433,182){$v$}
\put(436,96){\circle*{6}}
\qbezier(491,172)(491,134)(491,96)
\qbezier(491,172)(517,172)(543,172)
\qbezier(491,96)(517,96)(543,96)
\qbezier(543,172)(543,134)(543,96)
\qbezier(491,172)(517,134)(543,96)
\qbezier(543,172)(517,134)(491,96)
\put(585,132){\circle*{6}}
\qbezier(585,132)(564,152)(543,172)
\put(585,132){\circle*{6}}
\qbezier(543,96)(564,114)(585,132)
\qbezier(491,172)(463,172)(436,172)
\qbezier(436,172)(436,134)(436,96)
\qbezier(436,96)(463,96)(491,96)
\qbezier(380,172)(380,134)(380,96)
\qbezier(380,172)(408,172)(436,172)
\qbezier(380,96)(408,96)(436,96)
\qbezier(380,172)(408,134)(436,96)
\qbezier(436,172)(408,134)(380,96)
\put(160,50){$F_1$}\put(500,50){$F_2$}
\put(260,5){Figure 8. $F_1$ and $F_2$.}
\end{picture}
\end{center}

We give the following order to choose a packing $\mathcal{P}_0$ for $G$.

\noindent
{\bf Initialize.} $\mathcal{P}_0=\emptyset$.

\noindent
{\bf Step 1.} If $G$ contains $F_i$ for some $i=1,2$ and none saturated  vertex of $F_i$ has message, then we add the focal vertex $u$ and a saturated vertex $v$ in $F_i$ with $d_{F_i}(u,v)=3$ to $\mathcal{P}_0$ (see, the two bigger vertices in Figure 8). Process Step 1 till $G$ contains no such an $F_i$. Then go to Step 2.

\noindent
{\bf Step 2.} If $G$ contains $T_i\circ A_j$ for some $i\in\{1,2\},j\in\{1,2,3\}$ and none saturated vertex of $T_i\circ A_j$ has message, then we add the focal vertex (linked $T_i$ and $A_j$) $u$ and a saturated vertex $v$ with $d_{T_i}(u,v)=3$ to $\mathcal{P}_0$. Process Step 2 till $G$ contains no such a $T_i\circ A_j$. Then go to Step 3.

\noindent
{\bf Step 3.} If $G$ contains $T_i$ for some $i=1,2$ and none saturated vertex of $T_i$ has message, then we add two indictor vertices $u$ and $v$ with $d_{T_i}(u,v)=3$ to $\mathcal{P}_0$ (see, the two bigger vertices in Figure 7). Process the step till $G$ contains no such a $T_i$. Then go to Step 4.

\noindent
{\bf Step 4.} If $G$ contains $A_i\circ A_j$ for $i, j\in\{1,2,3\}$ and none saturated vertex of $A_i\circ A_j$ has message, then we add the focal vertex to $\mathcal{P}_0$. Process the step till $G$ contains no such an $A_i\circ A_j$. Then go to Step 5.

\noindent
{\bf Step 5.} If $G$ contains $A_i$ for some $i\in \{1, 2, 3\}$ and none saturated vertex of $A_i$ has message, then we add one saturated vertex to $\mathcal{P}_0$. Process the step till $G$ contains no such an $A_i$.

\noindent
{\bf Output.} $\mathcal{P}_0$.

\noindent
{\bf Remark.} Notice that every vertex of $P_0$  is either a saturated vertex or a focal vertex in some subgraph of $G$, and before the vertex is chosen to $P_0$, it does not have message.

\begin{lemma}\label{55}
Let $H$ be a subgraph isomorphic to some $A_i$ for $i=2,3$. Then $|V(H)\cap \mathcal{P}_0|=1$.
\end{lemma}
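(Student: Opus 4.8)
The plan is to prove the two inequalities $|V(H)\cap\mathcal{P}_0|\ge 1$ and $|V(H)\cap\mathcal{P}_0|\le 1$ separately. Throughout I would use that $H$ is an induced subgraph of $G$ (Lemma \ref{le4}), that $\mathcal{P}_0$ is a packing, and the explicit description of $A_2$ and $A_3$: in either case the two $G$-neighbours inside $H$ of any degree-$2$ vertex are two \emph{mutually adjacent saturated} vertices of $H$, and a saturated vertex $s$ of $H$ satisfies $N_G(s)\subseteq V(H)$ (all four neighbours lie in $H$). A short distance computation in the induced graph $H$ shows that every pair of vertices of an $A_2$ is within distance $2$, while in an $A_3$ with saturated cycle $u_1u_2u_3u_4$ and degree-$2$ vertices $v_i\sim u_i,u_{i+1}$ the only pairs at distance $3$ are the two opposite pairs $\{v_1,v_3\}$ and $\{v_2,v_4\}$.

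For the lower bound I would argue as follows. The construction of $\mathcal{P}_0$ stops only when no $A_i$ (for $i\in\{1,2,3\}$) has all of its saturated vertices unobserved, so at termination $H$ has at least one observed saturated vertex. Let $s$ be the \emph{first} saturated vertex of $H$ that becomes observed during the observation process of $\mathcal{P}_0$. Since $s$ is saturated, $N_G(s)\subseteq V(H)$. If $s$ is observed by OR~1, then either $s\in\mathcal{P}_0$ or $s$ has a neighbour in $\mathcal{P}_0$; as that neighbour lies in $V(H)$, we are done in both cases. If instead $s$ is observed by OR~2, say from a neighbour $t$, then $t\in N_G(s)\subseteq V(H)$ is observed strictly before $s$; by the choice of $s$ the vertex $t$ cannot be saturated, so $t$ is a degree-$2$ vertex of $H$. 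But then the other $H$-neighbour $s'$ of $t$ is a second saturated vertex of $H$, and OR~2 firing at $t$ requires every neighbour of $t$ other than $s$ to be observed, in particular $s'$ is observed before $s$ — contradicting the minimality of $s$. Hence the OR~2 case cannot occur and $V(H)\cap\mathcal{P}_0\ne\emptyset$. This argument is clean and uniform for $A_2$ and $A_3$.

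For the upper bound, the case $H\cong A_2$ is immediate: $H$ is induced and has diameter $2$, so any two of its vertices are at $G$-distance at most $2$, and the packing property of $\mathcal{P}_0$ forbids two of them from lying in $\mathcal{P}_0$. For $H\cong A_3$ the same argument eliminates every pair except the opposite pairs $\{v_1,v_3\}$ and $\{v_2,v_4\}$, since all other pairs are within distance $2$. It therefore remains to rule out, say, $v_1,v_3\in\mathcal{P}_0$. By the Remark following the construction each of $v_1,v_3$ is a saturated or focal vertex of some $A$-type subgraph used in Steps~1--5, while both are degree-$2$ vertices of $H$. Invoking Lemma \ref{le3} (two $A$-subgraphs sharing a vertex meet only in a common focal vertex) I would pin down the local structure at $v_1$ and at $v_3$: in any $A_2$- or $A_3$-subgraph through $v_1$ it must occur as a focal vertex, so its two external neighbours are forced to complete a prescribed small configuration, and likewise for $v_3$. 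The union of $H$ with these forced configurations is then made to contain one of the forbidden subgraphs $J_i$ of Lemma \ref{forbsub}, a copy of $L_k$ excluded by Lemma \ref{le1}, or one of the graphs $I_i$ (or $K_5$) excluded by Lemma \ref{counterex}, a contradiction.

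The main obstacle is precisely this last step. The packing property alone does not exclude two opposite degree-$2$ vertices of an $A_3$, because they genuinely sit at $G$-distance exactly $3$: placing $v_1$ observes only $u_1,u_2$ and propagation within $H$ stalls, so $v_3$ need not acquire a message. The exclusion must therefore come from a careful case analysis of the external neighbourhoods of $v_1$ and $v_3$, combined with the forbidden-subgraph lemmas, in exactly the style already used in Lemmas \ref{le3} and \ref{le4}.
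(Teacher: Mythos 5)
Your lower-bound argument is correct and in fact more careful than the paper's own (the paper dismisses $|V(H)\cap\mathcal{P}_0|\ge 1$ as ``obvious''); the key observation that a saturated vertex of $H$ has all four $G$-neighbours inside $H$, so the first-observed saturated vertex cannot have been reached by OR~2, is exactly right. The upper bound, however, has two genuine problems. First, it is circular: you assume throughout that $\mathcal{P}_0$ is a packing, but the packing property is Lemma~\ref{5} of the paper, which is proved \emph{after} Lemma~\ref{55} and whose proof invokes Lemma~\ref{55} twice (``Now $|V(H)\cap \mathcal{P}_0|\ge 2$, by Lemma~\ref{55}, we have $A_i=A_1$'', and again in Case~2). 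So the packing property is not available at this point in the logical order, and you do not supply an independent proof of it. Second, even granting packing, you yourself isolate the decisive case --- the opposite degree-$2$ vertices $v_1,v_3$ of an $A_3$, which sit at distance $3$ and are therefore not excluded by packing --- and you leave it as a plan (``I would pin down the local structure \dots\ is then made to contain one of the forbidden subgraphs''), explicitly calling it the main obstacle. A proposal whose key case is acknowledged but not carried out is not a proof.

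The paper's argument avoids both difficulties by never using distances. Once some $x\in V(H)\cap\mathcal{P}_0$ is chosen, a saturated vertex of $H$ acquires a message: either $x$ itself is saturated in $H$, or $x$ has degree $2$ in $H$ and then its two $H$-neighbours, which are saturated, receive messages by OR~1. By Lemmas~\ref{le4} and~\ref{le3}, no vertex of $H$ can be a saturated vertex of any \emph{other} $A$-subgraph, so any later step that might select a second vertex $y\in V(H)$ would have to do so through a governing subgraph ($F_i$, $T_i\circ A_j$, $T_i$, $A_i\circ A_j$, or $A_i$) whose saturated vertices include saturated vertices of $H$; but every step of the construction is conditioned on \emph{all} saturated vertices of that subgraph being message-free, and this condition now fails. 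Hence no second vertex of $H$ is ever chosen, handling $A_2$ and $A_3$ (including the pair $v_1,v_3$) uniformly. To repair your proof, replace the packing/distance argument by this message-based one; your lower-bound paragraph can be kept as a useful elaboration of the step the paper omits.
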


\begin{proof}
 Obviously $|V(H)\cap \mathcal{P}_0|\geq 1$. Suppose $x\in V(H)\cap \mathcal{P}_0$. 
 By the remark above, at least one saturated vertex of $H$ has message. By Lemma \ref{le4}, $H$ is an induced subgraph of $G$. By Lemma \ref{le3}, if $H$ has a common vertex with other subgraph isomorphic to $A_i$ for $i\in \{1,2,3\}$, then the common vertex is the vertex of degree two of $H$. Therefore none vertices of $H$ is the saturated vertex in other subgraph isomorphic to $A_i$ for some $i$. According to the above choice order, we do not choose any vertex in $H$ but $x$. Hence $|V(H)\cap \mathcal{P}_0|=1$.
\end{proof}

\begin{lemma}\label{5}
 Let $\mathcal{P}_0$ be the vertex subset of $G$ obtained by the choice order above. Then $\mathcal{P}_0$ is a packing of $G$.
\end{lemma}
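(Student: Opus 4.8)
The plan is to argue by contradiction. Suppose $\mathcal{P}_0$ is not a packing, so there exist two vertices $x,y\in\mathcal{P}_0$ with $d_G(x,y)\le 2$. I would first separate the two ways such a pair can arise: either $x$ and $y$ are the two vertices $u,v$ chosen \emph{together} in a single execution of one of Steps~1--3, or they enter $\mathcal{P}_0$ in \emph{different} rounds of the choice procedure. These two cases call for genuinely different arguments, one purely structural and one based on the message/propagation invariant recorded in the Remark.

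For the first case I would use that each pair $(u,v)$ added simultaneously is selected with $d_H(u,v)=3$ inside the witnessing gadget $H\in\{F_1,F_2,T_1,T_2\}$ (or $T_i\circ A_j$), and then show that no shorter path exists in $G$. The point is that the relevant gadgets are induced subgraphs of $G$ --- for the $A_i$-parts this is exactly Lemma~\ref{le4}, and for $T_i,F_i$ one checks it directly --- while claw-freeness together with $4$-regularity forbids any chord or external shortcut that would drop the distance below $3$; any such shortcut would create one of the configurations already excluded by Lemmas~\ref{le1}, \ref{le3}, and~\ref{forbsub}, or force $G\cong I_i$ contrary to Lemma~\ref{counterex}. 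Hence $d_G(u,v)\ge 3$ for simultaneously chosen pairs.

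For the second case, order $x,y$ by the round in which they are chosen and assume $x$ is chosen no later than $y$; let $\mathcal{P}$ be the contents of $\mathcal{P}_0$ immediately before $y$'s round, so that $x\in\mathcal{P}$. By the Remark following the choice procedure, $y$ carries no message with respect to $\mathcal{P}$, that is $y\notin P_G^{\infty}(\mathcal{P})$. On the other hand, adding $x$ already forces $N_G[x]\subseteq P_G^{0}(\mathcal{P})\subseteq P_G^{\infty}(\mathcal{P})$ by OR~1. If $d_G(x,y)=1$ then $y\in N_G(x)$ is observed, an immediate contradiction. If $d_G(x,y)=2$, pick $z\in N_G(x)\cap N_G(y)$; then $z$ is observed, and I would run the OR~2 propagation outward from the dense gadget witnessing $x$: since every vertex has degree $4$ and $G$ is claw-free, the saturated cluster containing $x$ (a $K_4$, a saturated triangle, or a saturated quadrilateral) becomes fully observed, and the degree constraint makes OR~2 fire along the low-degree conduits, so observation reaches distance two from $x$ and in particular reaches $y$, again contradicting $y\notin P_G^{\infty}(\mathcal{P})$.

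I expect the distance-two subcase of the second case to be the real work: one must verify, gadget by gadget and using the forbidden-subgraph lemmas to pin down the local structure around $z$, that the OR~2 chain genuinely propagates across the common neighbor $z$ to $y$ and is never stalled by a vertex with two or more unobserved neighbors. The main obstacle is thus controlling these local configurations around $z$ --- this is precisely where Lemmas~\ref{le1}, \ref{le3}, \ref{le4}, and~\ref{forbsub}, together with $G\ncong K_5$ and $G\ncong I_i$ from Lemma~\ref{counterex}, are needed to exclude every arrangement in which the propagation could block before reaching $y$.
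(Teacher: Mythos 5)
Your reduction of the distance-two case to the claim that OR~2 propagation from $x$ must cross the common neighbor $z$ and observe $y$ is the gap: that claim is false, and it is not what makes the lemma true. In a $4$-regular graph an observed vertex fires under OR~2 only when it has a \emph{single} unobserved neighbor, and the gadgets in question contain degree-two ``conduit'' vertices at which propagation from one chosen vertex provably stalls. For instance, if $x$ is a saturated vertex of an $A_1$ (a $K_4$ plus a vertex $e$ joined to two of its vertices), then $P^{\infty}(\{x\})$ contains $N[x]$ and the outside neighbors of the two $K_4$-vertices of degree three in $A_1$, but $e$, though observed, has two unobserved outside neighbors and never fires; a vertex $y$ adjacent to $e$ lies at distance two from $x$ and is \emph{not} observed. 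This is not a pathological situation excluded by the forbidden-subgraph lemmas --- it is exactly the situation inside the gadgets $F_1$, $F_2$, $T_i$ and $T_i\circ A_j$, and it is precisely why Steps~1--3 place \emph{two} vertices at distance three in them: in $F_1$, choosing only $v$ observes nothing beyond the near side of the middle $K_4$, and the focal vertex $u$ together with everything behind it stays dark. So in your Case~(B) with $d(x,y)=2$ one cannot derive the contradiction ``$y$ already has a message''; in the genuinely dangerous configurations it does not.

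What actually closes these cases in the paper is a mechanism your proposal never invokes: the \emph{priority} among the five steps. Using Lemmas~\ref{le2}, \ref{le3}, \ref{le4}, \ref{le61}, \ref{le62} and \ref{forbsub}, the paper shows that two chosen vertices at distance two would have to sit inside a larger merged gadget ($B_3$ or $B_4$, and from there some $T_i$, $T_i\circ A_j$ or $F_i$) none of whose saturated vertices had a message at the relevant moment; but then an \emph{earlier} step of the procedure would already have processed that larger gadget and selected its designated vertices (a focal vertex together with a saturated vertex at distance three), so the procedure would never have put both $x$ and $y$ into $\mathcal{P}_0$. The contradiction is with the choice order itself, not with the no-message invariant plus propagation. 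Your Case~(A) (pairs chosen simultaneously) and the $d(x,y)=1$ subcase are fine in spirit, but without an argument exploiting the Step~1 $\to$ Step~5 preference order, the distance-two case cannot be completed along the lines you propose.
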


\begin{proof}
Suppose the statement is false. That is, there are two vertices $x$ and $y$ of $\mathcal{P}_0$ such that $d(x,y)\le 2$. In fact, we only need to consider the case for $d(x,y)=2$. We partition our discussion into two cases.

{\bf Case 1.} $x$ (or $y$) is a focal vertex of some subgraph $H\cong A_i\circ A_j$ of $G$, where $i,j\in\{1,2,3\}$.

Since $d(x,y)=2$, without loss of generality say that both of $x$ and $y$ are in $A_i$ of $H$. Now $|V(H)\cap \mathcal{P}_0|\ge 2$, by Lemma \ref{55}, we have $A_i=A_1$. Denote the subgraph isomorphic to $A_1$ in $H$ by $H_1$. Then the two vertices of degree three in $H_1$ are the saturated vertices in other copies of $A_k$ for some $k\in\{1,2,3\}$ or $B_4$. By Lemmas \ref{forbsub} and \ref{le2}, there is a subgraph $H_2$ isomorphic to $B_3$ or $B_4$ such that $H_1\subset H_2$.  If $y$ is chosen prior to $x$, then all the vertices of $A_1$ have message. It means that $x\notin P_0$, a contradiction. Now we suppose that $x$ is chosen prior to $y$.

If $H_2\cong B_3$, then by Lemma \ref{le61}, there is a subgraph $H_3$ isomorphic to $B_1$ or $B_2$ such that $x,y\in V(H_3)$. Since $x\in V(A_1\circ A_j)$ and based on the configuration of $A_1$, $A_j$ is isomorphic to $A_2$ or $A_3$. By Lemma \ref{le4}, $A_j$ is an induced subgraph of $G$. If $H_3$ is isomorphic to $B_2$, then one vertex of of degree two in $H_2$ is a saturated vertex of  $A_j$. This contradicts Lemma \ref{le3}. If $H_3$ is isomorphic to $B_1$,
then $H_2$ and $A_j$ have two common vertices of degree two. 
 By the configuration of $A_2$ or $A_3$, there is a subgraph $H_4$ isomorphic to $T_1$ or $T_2$ such that $x,y\in V(H_4)$.

Note that all the saturated vertices of $H_4$ have no message before $x,y$ are chosen. If there is a subgraph $H_5$ isomorphic to $F_1$ or $F_2$ such that $V(H_4)\cap V(H_5)$ isomorphic to $A_2$ or $A_3$, and none saturated vertices of $H_5$ has message before $x,y$ are chosen, then the focal vertex in $H_5$ which is also the vertex of degree two in $H_4$ should be chosen in Step 1. If there is no such an $H_5$, then we choose vertices of $H_3$ by Steps 2 or 3. For both cases, we do not choose $x$ and $y$ in $H_2$.

Now we consider that $H_2\cong B_4$. Note that $H$ and $H_2$ share an $A_1$. If $H\cong A_1\circ A_1$, then $H$ is also an induced subgraph of $G$, contradicting Lemma \ref{forbsub}. If $H\cong A_1\circ A_2$ (or $H\cong A_1\circ A_3$), then both of $x$ and $y$ are in a subgraph $H_3$ isomorphic $F_1$ or $F_2$. Since $x$ is chosen prior to $y$, all the saturated vertices of $H_3$ have no message before $x$ and $y$ are chose. Then we do not choose $y$ in $H_3$, a contradiction.

{\bf Case 2.} Neither $x$ nor $y$ is a focal vertex of any subgraph isomorphic to $A_i\circ A_j$ of $G$, where $i,j\in\{1,2,3\}$.

If $x$ is in some subgraph $H\cong F_i$ of $G$, where $i\in\{1,2\}$. If $y\in V(H)$, then $x$ is prior chosen to $y$. Note that $x$ and the focal vertex in $H$ are chosen to $P_0$ in Step 1 at the same time. Then $y\notin P_0$, a contradiction. Suppose that $y\notin V(H)$. Since $y$ is not the focal vertex, by Lemma \ref{le1}, $y$ is a vertex of degree three in a copy of $A_1$, a contradiction.

 If $x$ is in a copy of $A_i$ for some $i\in\{2,3\}$, then by Lemma\ref{55}, $y$ is in another copy of $A_j$ for some $j\in\{1,2,3\}$. Since $d(x,y)=2$, both of $x$ and $y$ are in a subgraph $H$ isomorphic to $A_i\circ A_j$.
 Before $x,y$ being chosen, all the saturated vertices of $H$ have no message. If both of $x$ and $y$ are in some subgraph isomorphic to $T_1$ or $T_2$, then by the similar discussion as Case 1, we can get a contradiction. Otherwise, we choose the focal vertex other than $x$ or $y$ in $H$. A contradiction.

Now we suppose that $x$ is in the copy of $A_1$. Base on the above discussion, we only need to consider that $y$ is in another copy of $A_1$. If $x, y$ are in a subgraph $H$ isomorphic to $A_1\circ A_1$, then by Lemma \ref{forbsub}, $G[V(H)]\cong I_7$, a contradiction. By Lemmas \ref{le2}, we only need to consider that $x$ and $y$ are in a subgraph $H$ isomorphic to $B_3$ or $B_4$.

If $H\cong B_3$, then $y$ is the focal vertex, a contradiction. If $H\cong B_4$, then both of $x$ and $y$ are in a subgraph $H_1$ isomorphic to $F_1$ or $F_2$, where all the saturated vertices of $H_1$ have no message before $y$ being chosen. Note that $y$ is chosen prior to $x$. By Step 1, we do not choose $x$ in $H_1$, a contradiction.

This completes the proof.
\end{proof}

Recall that for each integer $i\ge 0$ and a vertex set $S$ of $G$, $P^0(S)=S$ and $P^{i+1}(S)=\cup \{N_G[v]: v\in P^{i}(S)$ such that $|N_G[v] \setminus P^{i}(S)|\leq 1\}$. Note that if $H$ is isomorphic to $A_i$ for some $i\in\{1,2,3\}$, then from Step 5 of the choice order $P^{1}(\mathcal{P}_0)$ contains at least one saturated vertex of $H$.

\begin{lemma}\label{lex}
Let $G$ contain a subgraph $H$ isomorphic to $L_2$ and $\mathcal{P}_0$ be a packing  denoted as above. Then $P^{1}(\mathcal{P}_0)$ contains at least one saturated vertex of $H$.
\end{lemma}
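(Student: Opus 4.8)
The plan is to feed $H$ into Lemma~\ref{le62} and then invoke the remark stated just before this lemma (the $A_i$ version of the present claim). Label the two $K_4$-factors of $H\cong L_2$ as $D_1,D_2$, let $a,b\in V(D_1)$ and $e,f\in V(D_2)$ be the four connector vertices (so $ae,bf\in E(H)$), and let $c,d$ be the two degree-three vertices of $D_1$; the saturated vertices of $H$ are exactly $a,b,e,f$. Lemma~\ref{le62} yields a $B_4$ with $H\subset B_4$, and I name things so that its extra vertex $w$ has $w\sim c,d$. Then $F:=G[V(D_1)\cup\{w\}]\cong A_1$, the saturated vertices of $F$ are precisely $c$ and $d$, and since $d_G(c)=d_G(d)=4$ with every neighbour inside $V(D_1)\cup\{w\}$ we get the key identity $N_G[c]=N_G[d]=V(D_1)\cup\{w\}$. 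Applying the remark to $F$ places a saturated vertex of $F$ in $P^1(\mathcal{P}_0)$, so without loss of generality $c\in P^1(\mathcal{P}_0)$.

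First I would dispose of the benign cases by asking how $c$ is produced, using $P^0(\mathcal{P}_0)=N_G[\mathcal{P}_0]$ and $N_G(c)=\{a,b,d,w\}$. If $c\in P^0(\mathcal{P}_0)$ then some vertex of $\{a,b,c,d,w\}$ lies in $\mathcal{P}_0$; if instead $c$ is gained by a single OR~2 step, that step is applied at some $v\in\{a,b,d,w\}$. Whenever the responsible vertex is $a$ or $b$ we are done at once, these being saturated vertices of $H$. Whenever it is $c$ or $d$, the identity $N_G[c]=N_G[d]=V(D_1)\cup\{w\}$ forces all of $V(D_1)$ into $P^0(\mathcal{P}_0)$ (if that vertex is in $\mathcal{P}_0$) or into $P^1(\mathcal{P}_0)$ by the one OR~2 step (if the vertex is $d$ acting on $c$); either way $a,b\in P^1(\mathcal{P}_0)$. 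So in all cases except one, $P^1(\mathcal{P}_0)$ meets $\{a,b\}$, and this part is routine once the twin identity and the $K_4$ structure of $D_1$ are recorded.

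The hard part, and the single remaining case, is when the message reaching $c,d$ enters through $w$: although $w$ has degree two inside $B_4$, it has degree four in $G$ (its other two neighbours $w_1,w_2$ are forced by claw-freeness to be adjacent), so $w$ may genuinely lie in $\mathcal{P}_0$ -- for instance as the focal vertex of an $A_1\circ A_1$ whose second $K_4$ sits on $w_1,w_2$ -- or be observed from that side. Then one OR~2 step at $w$ fills $c,d$ but can leave $a,b$ unobserved, and the $K_4$ $D_1$ stays half-covered after a single step. To close this I would show that such a configuration forces $B_4$, together with the $K_4$ at $w$, to contain one of the larger graphs $F_1,F_2,T_1,T_2$ of Figures~7 and~8 (each of which contains $H$). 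These are processed in Steps~1--3 of the choice order, strictly before the step that could insert $w$ into $\mathcal{P}_0$; the packing vertex selected there is at distance three from a connector vertex of $H$, so its initial observation plus one OR~2 step already drops a saturated vertex of $H$ into $P^1(\mathcal{P}_0)$. The essential check is therefore the finite verification that the $w$-side configuration really does realise one of these $F_i$ or $T_i$ and that the vertex prescribed by the corresponding step is $P^1$-close to a connector of $H$; this is where the proof really lives.
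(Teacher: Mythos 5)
Your proposal is incomplete precisely where you say it is: the case in which the observation of $c,d$ enters through $w$ is never actually closed, only a plan is sketched (``show the $w$-side configuration realises one of $F_1,F_2,T_1,T_2$ and check the chosen vertex is $P^1$-close to a connector''), and by your own admission this deferred verification is where the proof lives. Worse, the plan aims at the wrong target. Your motivating scenario --- $w\in\mathcal{P}_0$ as the focal vertex of an $A_1\circ A_1$ whose second $K_4$ sits on $w_1,w_2$ --- cannot occur in $G$: every vertex of $D_1$ already has all four of its $G$-neighbours inside $V(H)\cup\{w\}$, and the putative second $K_4$, say $K'$, must be vertex-disjoint from $V(H)$ (if $w_1,w_2$ lay in $D_2$ then $w$ would be adjacent to both degree-three vertices of $D_2$ and $G\cong I_7$, contradicting Lemma~\ref{counterex}); hence $G[V(D_1)\cup\{w\}\cup V(K')]$ would be an \emph{induced} copy of $A_1\circ A_1\cong J_{11}$, contradicting Lemma~\ref{forbsub}(ii). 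Also, $T_1$ and $T_2$ contain only one $K_4$, so, contrary to your parenthetical claim, they do not contain $H\cong L_2$ and are irrelevant to this lemma.

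Once the $J_{11}$ obstruction is in place, the hard case collapses into the paper's short dichotomy, which is the argument you were missing. By the Remark following the choice order, $w$ can enter $\mathcal{P}_0$ only as a saturated or a focal vertex of one of the listed subgraphs. It is saturated in none of them: no $K_4$ passes through $w$, and a saturated triangle or quadrilateral of an $A_2$ or $A_3$ through $w$ would require a common neighbour of $c$ or $d$ with $w_1$ or $w_2$, impossible since $N_G(c)\cup N_G(d)\subseteq V(D_1)\cup\{w\}$. So $w\in\mathcal{P}_0$ forces $w$ to be focal, i.e.\ a degree-two vertex of a second $A_j$; $j=1$ is excluded above, and $j\in\{2,3\}$ says exactly that the $B_4$ of Lemma~\ref{le62} lies inside an $F_1$ or $F_2$ --- the paper's first case, handled by Step 1, which adds along with $u=w$ the saturated vertex $v$ at distance three, a connector of $H$, so a saturated vertex of $H$ is already in $P^0(\mathcal{P}_0)$. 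In the remaining case $w\notin\mathcal{P}_0$, your ``hard case'' is vacuous, and Step 5 applied to $A_1=G[V(D_1)\cup\{w\}]$ places $c$ or $d$ in $\mathcal{P}_0$, whence $a,b\in N[c]\subseteq P^0(\mathcal{P}_0)$. This two-case split ($B_4\subset F_i$: Step 1; otherwise: Step 5) is the paper's entire proof; your benign cases reproduce fragments of it, but the decisive case must be settled by the forbidden-subgraph lemmas, not by the open-ended finite check you propose.
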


\begin{proof}
By Lemma \ref{le62}, there is a subgraph $H_1$ isomorphic to $B_4$ such that $H\subset H_1$. If $H_1$ is contained in some subgraph isomorphic to $F_1$ or $F_2$, then by Step 1, all the saturated vertices of $H$ have message. Otherwise, by Step 5, at least one saturated vertices of $H$ could obtain message.
\end{proof}

We extend the packing $\mathcal{P}_0$ of $G$ to a maximal packing and denote the resulting packing by $S_0$.

\begin{lemma}\label{51}
$G$ has a sequence $S_0\subset S_1\subset \cdots\subset S_l$ such that the following holds:

(i) For all $0\le i\le l-1, |S_{i+1}|=|S_i|+1$ and $|P^\infty(S_{i+1})|\geq|P^\infty(S_i)|+5$.

(ii)$P^\infty(S_l)=V$.
\end{lemma}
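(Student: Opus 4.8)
The plan is to build the chain $S_0 \subset S_1 \subset \cdots \subset S_l$ greedily. We begin with the maximal packing $S_0$ (the extension of $\mathcal{P}_0$), and at each stage, given $S_i$ with $P^\infty(S_i) \neq V$, we want to produce a single new vertex $s$ so that $S_{i+1} = S_i \cup \{s\}$ satisfies $|P^\infty(S_{i+1})| \geq |P^\infty(S_i)| + 5$. Once no such step is possible because $P^\infty(S_i)$ already equals $V$, we stop and set $l = i$, giving (ii) for free. So the entire content is in establishing the ``gain of at least $5$'' at each step, and the existence of such a step as long as we have not yet observed everything.

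\textbf{Locating an unobserved region with the right structure.}
First I would argue that whenever $P^\infty(S_i) \neq V$, there is an unobserved vertex whose neighborhood in $G$ has a usable local structure. Since $G$ is $4$-regular and claw-free, the neighborhood of any vertex contains either a triangle or two disjoint edges, so locally $G$ is built out of the configurations $K_5$, $K_5-e$, $A_1, A_2, A_3$, and the $L_k$ chains studied in Lemmas~\ref{le1}--\ref{le62}. The key point is that these structural lemmas have already eliminated all the ``bad'' overlap patterns ($I_i$, $J_i$, $L_k$ for $k\ge 3$), so an unobserved vertex must sit in one of a short list of admissible neighborhoods. I would then choose $s$ to be a saturated vertex of such a configuration lying on the boundary between the observed set $P^\infty(S_i)$ and the unobserved remainder; adding $s$ observes $s$ and its four neighbors by OR~1.

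\textbf{Propagating to gain five vertices.}
The heart of the argument is showing the gain is at least $5$, not merely the trivial count from $N[s]$. After placing $s$, OR~1 observes $N[s]$, which has five vertices; I must check that at least five of these are \emph{newly} observed, or that OR~2 propagation into the configuration compensates for overlaps with $P^\infty(S_i)$. Here I would lean on the fact that $G$ contains no $L_k$ with $k\ge 3$ (Lemma~\ref{le1}) and none of the forbidden $J_i$ (Lemma~\ref{forbsub}): these guarantee that once a $K_4$-block or an $A_i$ configuration receives a single message, propagation by OR~2 cascades through it, because each block has at most one ``exit'' edge into the rest of the graph, so every vertex of the block eventually has only one unobserved neighbor. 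This is exactly the mechanism behind Lemma~\ref{lex}, and I would reuse it: the chain structure forces a clean cascade of length at least one full $K_4$ plus a connecting vertex, which yields the five new observed vertices.

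\textbf{Main obstacle.}
The hard part will be the bookkeeping at the boundary: when the newly added $s$ and its cascade run into vertices already in $P^\infty(S_i)$, I must ensure the overlap never costs more than the propagation gains back, so that the net increase is still at least $5$. This requires a careful case analysis of how an unobserved block attaches to the observed set, organized by which configuration ($K_5$, $K_5-e$, $A_1$, $A_2$, $A_3$, or an $L_2$-type join) the boundary vertex belongs to; the forbidden-subgraph lemmas are precisely what keep this case list finite and rule out the configurations where propagation would stall after fewer than five new vertices. I expect the $A_1 \circ A_1$ and $B_4$ overlap cases (the ones handled delicately in Lemma~\ref{5}) to be the most delicate, since there the cascade can be ``blocked'' by an already-observed focal vertex, and I would handle those by choosing $s$ to be the saturated vertex farthest from the observed region within the block.
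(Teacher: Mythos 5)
Your high-level skeleton (build the chain greedily, add one vertex per step, gain at least $5$, stop when everything is observed) coincides with the paper's, but the entire mathematical content of the lemma lies in the boundary analysis that you defer to a ``main obstacle'' paragraph, and the mechanism you propose in its place does not work. The claim that each $K_4$-block or $A_i$ configuration ``has at most one exit edge into the rest of the graph, so every vertex of the block eventually has only one unobserved neighbor'' is false: in a $4$-regular graph the $K_4$ inside an $A_1$ has four edges leaving it, and an observed vertex with two or more unobserved neighbors can never apply OR~2, so propagation can stall. You also misread Lemma~\ref{lex}: it is not a statement about cascades; it says that the initial packing $\mathcal{P}_0$ was \emph{chosen} (Steps 1--5) so that every $L_2$-subgraph already has a saturated vertex observed at step $1$, and the remark before it gives the same property for every copy of $A_1,A_2,A_3$. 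This property of $S_0$, which your proposal never invokes, is exactly what the paper's proof needs; the forbidden-subgraph lemmas alone do not suffice, because the problematic configurations below are perfectly legal subgraphs of $G$.

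Here is a concrete configuration where your greedy step fails. Let $u,v'$ be observed, let $\{u,v',u_1,u_2\}$ induce a $K_4$ with $u_1,u_2$ unobserved, and let $w$ be a common unobserved neighbor of $u_1,u_2$ whose remaining two neighbors lie in the observed set but cannot propagate. Then $\{u,v',u_1,u_2,w\}$ induces an $A_1$ (not forbidden by any of Lemmas~\ref{le1}--\ref{le62}), and adding \emph{any} single vertex of $\{u_1,u_2,w\}$ observes only three new vertices, after which no OR~2 step is possible; in particular your fallback of ``choosing the saturated vertex farthest from the observed region'' gains $3<5$. The paper excludes precisely this situation by the packing property: its proof of Lemma~\ref{51} sets $M=P^\infty(S_i)$, $\mathcal{U}=\{u\in M: N_G(u)\setminus M\neq\emptyset\}$, proves from claw-freeness that unobserved neighbors of any $u\in\mathcal{U}$ form a clique (Claim 1), that every unobserved vertex has at least two unobserved neighbors (Claim 2), and a $K_4$-attachment claim (Claim 3), then runs a case analysis on $d_{\overline M}(u)\in\{2,3\}$ in which every branch either exhibits an $A_1$, $A_2$, $A_3$ or $L_2$ all of whose saturated vertices are unobserved --- contradicting Step 5 of the choice order or Lemma~\ref{lex}, not any forbidden-subgraph lemma --- or names an explicit unobserved vertex whose addition gains at least $5$. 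Without establishing those claims and without using the specific construction of $S_0$, your argument cannot be completed.
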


\begin{proof}
Note that if $P^{\infty}(S_0)=V(G)$, then we are done. Now suppose that there is an $S_{i}$ for some $i\geq 0$ such that $M=P^\infty (S_i)$ does not contain all vertices of $G$. Denote $\overline{M}=V \setminus M$. Let $\mathcal{U}=\{u: u\in M, N_G(u)\setminus M\neq \emptyset\}$. It is obvious that $d_{M}(u)\geq1$ and $2\le d_{\overline{M}}(u)\le 3$ for each vertex $u\in \mathcal{U}$. We have the following statements.

\noindent
{\bf Claim 1.} For each $u\in \mathcal{U}$, $N_G(u)\setminus M$ induces a clique in $G$.

\begin{proof}
Suppose $x_1$ and $x_2$ are two neighbors of $u$ in $N_G(u)\setminus M$ and $u$ receives message from $v$ in $M$. Then $x_1v,x_2v\notin E(G)$. If $x_1x_2\notin E(G)$, then $\{u,x_1,x_2, v\}$ induces a claw. A contradiction.
\end{proof}

\noindent
{\bf Claim 2.} For each vertex $x\in\overline{M}$, $d_{\overline{M}}(x)\geq 2$.

\begin{proof} If all neighbors of $x$ are in $\overline{M}$, then we are done. Now
suppose that $x$ has a neighbor $x'\in M$. Note that $x'\in \mathcal{U}$. Since $2\le d_{\overline{M}}(x')\le 3$ and $N_G(x')\setminus M$ induces a clique, $d_{\overline{M}}(x)\ge 1$. If $d_{\overline{M}}(x)=1$, then let $x_1$ be the neighbor of $x$ in $\overline{M}$ and $N_G(x)\cap M=\{y_1, y_2, x'\}$. Since none of vertices in $ \{y_1, y_2, x'\}$ sends message to $x$ and $N_G(y_i)\setminus M$ induces a clique for each $i\in\{1,2\}$, $N_M(x_1)=\{y_1, y_2, x'\}$. Since $G$ is claw-free, $G[\{y_1, y_2, x'\}]$ contains at least one edge. Therefore $G[\{x,x_1,y_1, y_2, x'\}]$ contains an $A_1$ with two saturated vertices $x$ and $x_1$. Note that the two vertices have no message, contradicting to Step 5 of choice order.
\end{proof}

\noindent
 {\bf Claim 3.} Let $H\cong K_4$ be a subgraph of $G$ and $x$ be a vertex of $H$. If $x\in\overline M$ and there is a vertex $v$ such that $N_H(v)=\{x\}$, then $v\in\overline M$.

\begin{proof}
Since $G$ is claw-free and $N_H(v)=\{x\}$, $N_G[v]\setminus\{x\}$ induces a subgraph $H'$ isomorphic to $K_4$. If $v\in M$, then $v$ will receive message from one of its neighbors in $H'$. Since $H'\cong K_4$, all vertices of $H'$ have message. Now $v$ can send message to $x$, contradicting that $x\in\overline M$.
\end{proof}

Note that $2\le d_{\overline{M}}(u)\le 3$ for each vertex $u\in \mathcal{U}$. If there is a vertex $u\in \mathcal{U}$ such that $d_{\overline{M}}(u)=3$, then $u$ together with its three neighbors in $\overline{M}$, say $u_1,u_2,u_3$, induces a $K_4$. If two vertices of the three vertices, say $u_1$ and $u_2$, have a common vertex $u_4$ other than $u$ or $u_3$, then $\{u, u_1, u_2, u_3, u_4\}$ induces an $A_1$ with its saturated vertices $u_1$ and $u_2$. Note that neither $u_1$ nor  $u_2$ has message. This contradicts Lemma \ref{lex}. Now suppose that each vertex in $\{u_1,u_2,u_3\}$ is adjacent exactly one vertex in $\overline{M}\setminus \{u_1,u_2,u_3\}$. Then let $S_{i+1}=S_{i}\cup \{u_1\}$. We have $|P^\infty(S_{i+1})|\geq |P^\infty(S_i)|+5$.

 Then we suppose that $d_{\overline M}(u)=2$ for each $u\in \mathcal{U}$. Let $u\in \mathcal{U}$, $N_G(u)=\{v, v', u_1, u_2\}$, $v$ sends message to $u$ and $u_1, u_2\in \overline M$. We consider the two cases as follows.

 {\bf Case 1.}  $vv'\notin E(G)$.

 By the claw-freeness of $G$, $\{u_1, u_2, v', u\}$ induces a $K_4$. If $u_1$ and $u_2$ share a neighbor other than $v',u$, say $w$, then $\{u_1, u_2, u, v', w\}$ induces an $A_1$ such that its two saturated vertices $u_1$ and $u_2$ have no message, a contradiction. By Claim 3, $u_1$ and $u_2$ are adjacent to two distinct vertices $w_1$ and $w_2$ in $\overline M\setminus \{v',u,u_1,u_2\}$ respectively. If $w_1w_2\in E(G)$, then by the claw-freeness of $G$, both of $w_1$ and $w_2$ are in another copy of $K_4$. Then $\{u_1, u_2, w_1,w_2\}$
induces a saturated cycle in a copy of $L_2$, where none of saturated vertices of the $L_2$ has message, a contradiction. If $w_1w_2\notin E$, then by Claim 2, $w_1$ has a neighbor $w_3\in \overline M$ other than $u_1$. We define $S_{i+1}=S_{i}\cup \{w_1\}$. It is easy to check that $|P^\infty(S_{i+1})|\geq |P^\infty(S_i)|+5$.

{\bf Case 2.} $vv'\in E(G)$.

Suppose that both of $u_1$ and $u_2$ are adjacent to $v'$. Then whether $u_1$ and $u_2$ share a neighbor in $\overline M$ or not, by the similar discussion as Case 1, we can obtain a contradiction or a desired $S_{i+1}$.

Now we suppose that neither $u_1$ nor $u_2$ is adjacent to $v'$. If $u_1$ and $u_2$ share two neighbors other than $u$, then $G[N[u_1]]\cong A_1$ and none of two saturated vertices of $G[N[u_1]]$ has message, a contradiction. If $u_1$ and $u_2$ share exactly one neighbor other than $u$, say $w$, then let $w_1=N(u_1)\setminus\{w,u,u_2\}$ and $w_2=N(u_2)\setminus\{w,u,u_1\}$. Since $G$ is claw-free, $w_1w, w_2w\in E(G)$. Note that $w$ has no message (otherwise, $w$ receives message from $w_1$ or $w_2$, but it is impossible). Now $\{u,u_1,u_2,w,w_1,w_2\}$ induces an $A_2$ such that its saturated vertices $u_1,u_2,w$ have no message. A contradiction. If $u_1$ and $u_2$ do not have other common neighbors than $u$, then let $\{w_1,w_2\}=N(u_1)\setminus\{u,u_2\}$ and $\{w_3,w_4\}=N(u_2)\setminus\{u,u_1\}$. Since $G$ is claw-free, $w_1w_2, w_3w_4\in E(G)$. By Claim 2, we suppose without loss of generality that $w_1,w_3\in\overline M$.
First suppose that $w_2\in \overline M$.
If the two vertices of $N(w_1)\setminus\{w_2,u_1\}$ are also in $M$, then by Claim 1, $N[w_1]$ induces an $A_1$ such that none of its saturated vertices ($w_1$ and $w_2$) has message, a contradiction. Then at least one vertex of $N(w_1)\setminus\{w_2,u_1\}$ is in $\overline M$.  Let  $S_{i+1}=S_{i}\cup \{w_1\}$. We have  $|P^\infty(S_{i+1})| \geq |P^\infty(S_i)|+5$.
 The case for $w_4\in \overline M$ is similar completely.
Now suppose that both of $w_2$ and $w_4$ are in $M$. By Claim 2, there is a vertex $t$ in $\overline M$ adjacent to $w_1$. If $t=w_3$, then by the claw-freeness of $G$, $w_1$ and $w_3$ share a neighbor, say $w_5$. Now $\{u,u_1,u_2,w_1,w_2,w_3,w_4,w_5\}$ induces an $A_3$. Since all vertices in the saturated cycle $u_1u_2w_1w_3u_1$ have no message, we get a contradiction. If $t\ne w_3$ then let  $S_{i+1}=S_{i}\cup \{w_1\}$. Thus, $|P^\infty(S_{i+1})|\geq |P^\infty(S_i)|+5$.

Now we consider the case for exactly one of $u_1v'$ and $u_2v'$ is in $E(G)$. Without loss of generality say $u_1v'\in E(G)$ and $u_2v'\notin E(G)$. By the front discussion, we can suppose that $d_{\overline M}(v')=2$ and $u_3$ is the other neighbor of $v'$ in $\overline{M}$. By the claw-freeness of $G$, $u_1u_3\in E(G)$. Then $N[u]\cap\{u_3\}$ induces an $A_2$. Note that $u_2u_3\notin E(G)$ since each subgraph isomorphic to $A_2$ is also an induced subgraph of $G$. By Claim 2, we have $2 \le d_{\overline M}(u_2)\le 3$. If $d_{\overline M}(u_2)=3$, then let $S_{i+1}=S_{i}\cup \{u_2\}$. If $d_{\overline M}(u_3)=3$ then let $S_{i+1}=S_{i}\cup \{u_3\}$. For both cases,  $|P^\infty(S_{i+1})| \geq |P^\infty(S_i)|+5$. Now suppose that $d_{\overline M}(u_2)=d_{\overline M}(u_3)=2$. Let $w_2, w_3\in \overline M $ be another neighbors of $u_2$ and $u_3$ respectively. If $w_2=w_3$, then similar to the discussion in the paragraph above, we obtain a subgraph isomorphic to $A_3$ with all its saturated vertices have no message, a contradiction. If $w_2\neq w_3$, then let $S_{i+1}=S_{i}\cup \{u_1\}$. Thus, $|P^\infty(S_{i+1})| \geq |P^\infty(S_i)|+5$.

Since $|V|$ is finite, there exists an integer $l$ such that $P^\infty (S_l) = V$.
\end{proof}

We are now in a position to prove our main result, namely Theorem \ref{mainth}.

\begin{proof} Let $G$ be a counterexample such that $|V(G)|$ is minimal. Let $S_0,S_1,\cdots,S_l$ be a desired sequence of Lemma \ref{51}. Then $\gamma_{P}(G)\leq|S_l|$. Since $S_0$ is a packing in $G$, we have $|P^\infty(S_0)|=|N[S_0]|=5|S_0|$. If $l=0$ then $n=5|S_0|$ and $\gamma_{p}(G)\leq|S_0|\le\frac{n+1}{5}$. We are done. For $1\le i\le l$, by Lemma \ref{51}(i), we have  $|S_i|=|S_{i-1}|+1$. It means that $P^\infty (S_i )\geq P^\infty (S_{i-1})+5$. Then
$n = |P^\infty (S_l )|\geq |P^0(S_0)|+5l = 5(|S_0| + l) \geq 5\gamma_p(G)$,
Thus $\gamma_p(G) \leq n/5$. It contradicts that $G$ is a counterexample.
This completes the proof.
 \end{proof}

\section{Linear-time algorithm for power domination in weighted trees}

The weighted domination problem has been well-studied in the last several decades. Farber \cite{Farber81}, Natarajan and White \cite{Nat78} independently studied the classical domination in weighted trees. There is an extensive number of papers concerning the algorithmic complexity of the weighted domination problem in graphs, such as distance-hereditary graphs \cite{Yeh98}, chordal graphs \cite{Chang04}, interval graphs \cite{Bertossi88, Ramalingam88}. We refer to \cite{C1998} for more results and details.

An natural extension of power domination is \emph{weighted
power-domination}. Let $G=(V,E,w)$ be a weighted graph, where $w$ is a function from $V$ to positive real
numbers. Let $w(S)=\sum_{v\in S}w(v)$ be the weight of $S$ for any
subset $S$ of $V$. The \emph{weighted power domination number},
denoted by $\gamma_{p}^w(G)$, is defined as
$\gamma_{p}^w(G)=\min\{w(S)~|~S$ is a power dominating set of
$G\}$.
The \emph{weighted power domination problem} is to determine the weighted power domination number of
any weighted graph.

 Let $T=(V,E)$ be a tree with $n$ vertices. It is well known that the vertices of $T$ have an ordering  $v_1,v_2,\cdots, v_n$
 such that for each $1\le i\le n-1$, $v_i$ is adjacent to exactly one $v_j$ with $j>i$. The ordering is call a \emph{tree ordering} of the tree,
 where the only neighbor $v_j$ with $j>i$ is called the \emph{father} of $v_i$ and $v_i$ is a \emph{child} of $v_j$. For each $1\le i\le n-1$, the father of $v_i$ is denoted by $F(v_i)=v_j$. For technical reasons, we assume that $F(v_n)=v_n$.

In this section, a linear time dynamic programming style algorithm
is given to compute the exact value of weighted power domination
number in any tree. This algorithm is
constructed using the methodology of Wimer \cite{WI1987}.

We make use of the fact that the class of rooted tree can be
constructed recursively from copies of the single vertex $K_1$,
using only one rule of composition, which combines two trees
$(T_1,r_1)$ and $(T_2,r_2)$ by adding an edge between $r_1$ and
$r_2$ and calling $r_1$ the root of the resulting larger tree $T$.
We denote this as $(T,r_1)=(T_1,r_1)\circ (T_2,r_2)$.

In particular, if $D$ is a power dominating set of $T$, then $D$
splits two subsets $D_1$ and $D_2$ according to this decomposition.
However, $D_1$~($D_2$,~respectively) may not be a power dominating
set of $T_1$~($T_2$,~respectively). We express this as follows:
$(T,D)=(T_1,D_1)\circ (T_2,D_2)$. Let $T$ be a tree rooted at $r$.
 $\bar T$ is a new tree rooted at $r'$, where $V(\bar T)=V(T)\cup \{r'\}$ and $E(\bar T)=E(T)\cup \{rr'\}$.

In order to construct an algorithm to compute weighted
power domination number, we must characterize the possible
tree-subset pairs $(T,D)$. For this problem there are five
classes:\\
$[a]=\{(T,D)~|~D$ is a PDS of $T$ and $r\in D\}$;\\
$[b]=\{(T,D)~|~D$ is a PDS of $\bar T$ and $r\not\in D\}$;\\
$[c]=\{(T,D)~|~D$ is a PDS of $T$, but not of $\bar T$ and $r\not\in D\}$;\\
$[d]=\{(T,D)~|~D$ is a PDS of $T-r$, but it is not a PDS of $T$ and $r\not\in D\}$;\\
$[e]=\{(T,D)~|~D$ is not a PDS of both $T$ and $T-r$, but all vertices of $T$ can be observed by $D$ if a message is given to $r$ in advance$\}$.

\vskip 1cm
\begin{center}
\setlength{\tabcolsep}{4mm}

\begin{tabular}{c|ccccc}
$\circ$ &$[a]$&$[b]$&$[c]$&$[d]$&$[e]$\\\hline
$[a]$&  $[a]$   &  $[a]$   &  $[a]$     &  $[a]$   &  $[a]$  \\[10pt]
$[b]$&  $[b]$   &  $[b]$   &  $[b]$  &  $[c]$ &  $[c]$   \\[10pt]
$[c]$&  $[c]$   &  $[c]$   &  $[c]$  &  $\times$   &  $\times$   \\[10pt]
$[d]$&  $[b]$   &  $[b]$   &  $[d]$   &  $[e]$ & $[e]$  \\[10pt]
$[e]$&  $[c]$   &  $[c]$   &  $[e]$   & $\times$ & $\times$ \\[10pt]
\end{tabular}
\end{center}

Next, we must consider the expression $(T_1,D_1)\circ (T_2,D_2)$
with $(T_1,D_1)$ of class $[i]$ and $(T_2,D_2)$ of class $[j]$,
where $i,j\in \{a,b,c,d,e\}$. The above table shows the results of
$(T_1,D_1)\circ (T_2,D_2)$ in every possible cases. From this table, we obtain\\
$[a]=[a]\circ[a]\cup [a]\circ [b]\cup [a]\circ [c]\cup [a]\circ [d]\cup [a]\circ [e]$;\\
$[b]=[b]\circ [a]\cup [b]\circ [b]\cup [b]\circ [c]\cup [d]\circ [a]\cup [d]\circ [b]$;\\
$[c]=[b]\circ [d]\cup [b]\circ [e]\cup [c]\circ [a]\cup [c]\circ [b]\cup [c]\circ [b]\cup [e]\circ [a]\cup [e]\circ [b]$;\\
$[d]=[d]\circ [c]$;\\
$[e]=[d]\circ [d]\cup [d]\circ [e]\cup [e]\circ [d].$

The above formulation means that, for example, $(T,D)$ of class
$[a]$ can be obtained from $(T_1,D_1)$ of class $[a]$ and
$(T_2,D_2)$ of class $[a]$, or $(T_1,D_1)$ of class $[a]$ and
$(T_2,D_2)$ of class $[b]$, or $(T_1,D_1)$ of class $[a]$ and
$(T_2,D_2)$ of class $[c]$, or $(T_1,D_1)$ of class $[a]$ and
$(T_2,D_2)$ of class $[d]$, or $(T_1,D_1)$ of class $[a]$ and
$(T_2,D_2)$ of class $[e]$. It is easy to check that the above
formulation is correct by inspection. The final step is to define
the initial vector. In this case, for trees, the only basis graph is
the tree with single vertex $v$. It is easy to obtain that the
initial vector is $(w(v),\infty,\infty,0,\infty)$, where $'\infty'$ means
undefined. Now, we are ready to present the algorithm.

\vspace{4mm}

\noindent{\bf Algorithm WPDT.} Compute the weighted power domination number for a weight tree.\\
{\bf Input:} A tree $T=(V,E,w)$ with a tree
ordering $v_1,v_2,\cdots,v_n$.\\
{\bf Output:} The weighted power domination number $\gamma_{p}^w(T)$.\\
$1.$ {\bf for} $i:=1$ {\bf to} $n$ {\bf do}\\
$2.$ initialize vector $[i,1..5]$ to $[w(v_i),\infty,\infty,0,\infty]$;\\
$3.$ {\bf endfor}\\
$4.$ {\bf for} $j:=1$ {\bf to} $n-1$ {\bf do}\\
$5.$ $v_k=F(v_j)$;\\
$6.$ vector$[k,1]:=$vector$[k,1]+\min\limits_{1\le m\le 5}$vector$[j,m]$; \\
$7.$ vector$[k,2]:=\min\{$vector$[k,2]+\min\limits_{1\le m\le 3}$vector$[j,m]$,
vector$[k,4]+\min\limits_{1\le m\le 2}$vector$[j,m]\}$;\\
$8.$ vector$[k,3]:=\min\{$vector$[k,2]+\min\limits_{4\le m\le 5}$vector$[j,m]$,vector$[k,3]+\min\limits_{1\le m\le 3}$vector$[j,m]$,\\
\hspace*{40mm} vector$[k,5]+\min\limits_{1\le m\le 2}$vector$[j,m]$\};\\
$9.$ vector$[k,4]:=$vector$[k,4]+$vector$[j,3]$;\\
$10.$ vector$[k,5]:=\min\{$vector$[k,4]+\min\limits_{4\le m\le 5}$vector$[j,m]$, vector$[k,5]+$vector$[j,3]\}$.\\
$11.$ {\bf endfor}\\
$12.$ $\gamma_{p}^w(T)=\min\{$vector$[n,1]$, vector$[n,2]$, vector$[n,3]\}$.

\vspace{6mm}

From the above argument, we can obtain the following theorem.

\begin{theorem}
Algorithm $WPDT$ can output the
weighted power domination number of any weighted tree $T=(V,E,w)$
 in linear time $O(m+n)$, where $n=|V|$ and $m=|E|$.
\end{theorem}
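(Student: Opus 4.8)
The plan is to prove two separate things: that Algorithm WPDT returns $\gamma_p^w(T)$ (correctness) and that it runs in time $O(m+n)$ (complexity). The time bound is immediate, so the substance is correctness, which I would reduce to three facts and then close with an induction along the tree ordering. \textbf{(F1)} The five classes $[a],\dots,[e]$ are pairwise disjoint, and $D\subseteq V(T)$ is a PDS of the whole rooted tree $T$ (rooted at $v_n$) exactly when $(T,D)\in[a]\cup[b]\cup[c]$; this is what justifies line $12$. \textbf{(F2)} For $(T_1,D_1)$ of class $[i]$ and $(T_2,D_2)$ of class $[j]$, the class of $(T_1,D_1)\circ(T_2,D_2)$ is the single class recorded in the $(i,j)$ entry of the table (and the pair is \emph{dead}, i.e.\ never the restriction of a global PDS, when the entry is $\times$); crucially this class is a function of $(i,j)$ alone. \textbf{(F3)} The union-of-compositions identities for $[a],\dots,[e]$ read off from the table are exactly what lines $6$--$10$ implement, in min-plus form.

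For (F1) the disjointness is immediate from the definitions: $[a]$ is the only class with $r\in D$, while $[b],[c]$ require $D$ to be a PDS of $T$ and $[d],[e]$ require it not to be, with $[b]$ versus $[c]$ separated by whether $D$ is a PDS of $\bar T$, and $[d]$ versus $[e]$ by whether $D$ is a PDS of $T-r$. The output characterization holds since a PDS of $T$ either contains $r$ (class $[a]$) or not, and in the latter case lands in $[b]$ or $[c]$ according to whether it also dominates $\bar T$. I would then set up the global induction: for a vertex $v_k$ let $T_k$ be the subtree it roots together with all children already merged, and let $g(k,c)$ be the minimum weight of a set $D$ with $(T_k,D)\in[c]$ (with $g(k,c)=\infty$ if none exists). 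The base case is the single vertex: $\{v\}$ is a PDS placing it in $[a]$ with weight $w(v)$, $\emptyset$ is vacuously a PDS of the empty graph $T-r$ placing it in $[d]$ with weight $0$, and $[b],[c],[e]$ are infeasible, so the initial vector $(w(v),\infty,\infty,0,\infty)$ is correct. In the tree ordering every child of $v_j$ precedes $v_j$, so when the edge $v_jF(v_j)$ is processed the subtree at $v_j$ is already complete and merging it into $v_k=F(v_j)$ is exactly one application of $\circ$; by (F2) and (F3) the update of $\text{vector}[k,\cdot]$ preserves the invariant $\text{vector}[k,c]=g(k,c)$, \emph{provided} the right-hand sides use the values of $\text{vector}[k,\cdot]$ from before the merge (reading the five updates as simultaneous, since a single merge contributes only one restriction of the child's subset). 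Evaluating the invariant at $v_n$ together with (F1) yields $\gamma_p^w(T)=\min\{\text{vector}[n,1],\text{vector}[n,2],\text{vector}[n,3]\}$.

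The heart of the argument, and the step I expect to be the main obstacle, is (F2). Because $T_1$ and $T_2$ are joined by the single edge $r_1r_2$, all interaction during the observation process crosses that edge, so the outcome is governed only by the boundary behaviour at the two roots: whether the root lies in $D$, whether the process confined to each side already observes that side, whether the root can push a message across the new edge (OR~2 with a single remaining unobserved neighbour), and whether the root still needs to receive a message. The five classes were designed to record precisely this boundary data, which is why the composed class depends on $(i,j)$ alone. I would verify the table cell by cell by tracing the propagation: for instance $[d]\circ[d]=[e]$ because each side is observed except its root, neither root can fire OR~2 across the edge, yet a single external message to $r_1$ observes $r_1$ and hence all of $T_1$, which then fires across $r_1r_2$ to complete $T_2$; and $\times$ entries such as $[c]\circ[d]$ are exactly the combinations in which the two sides place contradictory demands on the edge, so no global PDS can restrict to them.

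The genuinely delicate cases are those separating $[b]$ from $[c]$, since whether the root can push a message to an external parent is sensitive to the phantom vertex $r'$ of $\bar T$: an extra unobserved neighbour at the root can block an OR~2 step that would otherwise have fired inward, turning a PDS of $T$ into a non-PDS of $\bar T$. Handling this correctly in every relevant cell is where the care is needed; once (F2) is in place, (F3) is a routine transcription of the union-of-compositions identities into the sums and minima of lines $6$--$10$. Finally, for the running time, lines $1$--$3$ cost $O(n)$, the main loop runs $n-1$ times performing a constant number of additions and minimisations over length-five vectors per iteration, hence $O(n)$ overall, and since $m=n-1$ for a tree this is $O(m+n)$.
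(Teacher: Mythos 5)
Your proposal is correct and follows exactly the paper's own route: the Wimer-style decomposition into the five classes $[a],\dots,[e]$, the composition table, its min-plus transcription into lines $6$--$10$, and the induction along the tree ordering with the initial vector $(w(v),\infty,\infty,0,\infty)$ --- the paper merely asserts that this "formulation is correct by inspection," so your (F1)--(F3), the base-case check, and the cell-by-cell verification plan (including the monotonicity fact that a PDS of $\bar T$ is a PDS of $T$, and the deadlock argument behind the $\times$ entries) supply precisely the verification the paper omits. Your caveat that the updates must be read as simultaneous (right-hand sides evaluated at the pre-merge values of vector$[k,\cdot]$) is not pedantry but necessary for correctness: read as sequential assignments, line $8$ would use the already-updated vector$[k,2]$ and line $10$ the already-updated vector$[k,4]$, so the child $v_j$'s contribution would be counted twice and the stored value need not correspond to any set of the claimed class.
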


 \frenchspacing

\end{document}